\newtheorem{thm}{Theorem}[section]
\newtheorem{theorem}[thm]{Theorem}
\newtheorem{corollary}[thm]{Corollary}
\newtheorem{lemma}[thm]{Lemma}
\newtheorem{proposition}[thm]{Proposition}
\theoremstyle{definition}
\newtheorem{definition}[thm]{Definition}
\newtheorem{remark}[thm]{Remark}
\newcommand{\gufin}{(G/U)(\mathbb{F}_{q^k})}
\begin{document}

\title[Symplectic Fourier transforms and the algebra of braids and ties]{Symplectic Fourier-Deligne transforms on $G/U$ and the algebra of braids and ties}
\author{Calder Morton-Ferguson}

\begin{abstract}
We explicitly identify the algebra generated by symplectic Fourier-Deligne transforms (i.e. convolution with Kazhdan-Laumon sheaves) acting on the Grothendieck group of perverse sheaves on the basic affine space $G/U$, answering a question originally raised by A. Polishchuk. We show it is isomorphic to a distinguished subalgebra, studied by I. Marin, of the generalized algebra of braids and ties (defined in Type $A$ by F. Aicardi and J. Juyumaya and generalized to all types by Marin), providing a connection between geometric representation theory and an algebra defined in the context of knot theory. Our geometric interpretation of this algebra entails some algebraic consequences: we obtain a short and type-independent geometric proof of the braid relations for Juyumaya's generators of the Yokonuma-Hecke algebra (previously proved case-by-case in types $A, D, E$ by Juyumaya and separately for types $B, C, F_4, G_2$ by Juyumaya and S. S. Kannan), a natural candidate for an analogue of a Kazhdan-Lusztig basis, and finally an explicit formula for the dimension of Marin's algebra in Type $A_n$ (previously only known for $n \leq 4$).
\end{abstract}

\maketitle

\setcounter{tocdepth}{1}
\tableofcontents

\section{Introduction}

In \cite{KL}, given a semisimple simply-connected algebraic group $G$ split over a finite field $\mathbb{F}_{q}$ with associated Weyl group $W$, the authors define an action of the generalized braid group $B_W$ on the constructible derived category $D^b_{c}(G/U)$ of mixed $\ell$-adic sheaves on the basic affine space associated to $G$ via so-called ``symplectic Fourier-Deligne transforms." In \cite{P}, this action was studied in further detail, where Polishchuk showed that each of the standard braid generators for $B_W$ satisfies a cubic relation when acting on $K_0(G/U) = K_0(D^b_c(G/U))$. In doing so, Polishchuk shows that the action of $B_W$ factors through the ``cubic Hecke algebra" $\mathcal{H}_W^c$ given by the quotient of $\mathbb{C}(v)[B_W]$ by a certain cubic relation in each simple braid generator. In loc. cit., Polishchuk asks which algebra is really acting: are there other relations satisfied by the action of $\mathbb{C}(v)[B_W]$ on $K_0(G/U)$, and can we identify the quotient of $\mathcal{H}^c_W$ by these relations more explicitly? The central result of this paper is an answer to this question.

Another perspective on symplectic Fourier-Deligne transforms can be given via convolution with ``Kazhdan-Laumon sheaves" on $G/U \times G/U$ as follows. In \cite{KL}, Kazhdan and Laumon define for each $w \in W$ a stratum $X(w) \subset G/U \times G/U$ along with a perverse sheaf $K(w)$ on $X(w)$ and its Goresky-MacPherson extension $\overline{K(w)}$ to the closure $\overline{X(w)}$. They then show that the action of the symplectic Fourier-Deligne transform on $D^b_c(G/U)$ corresponding to any simple reflection $s$ is given by convolution on the right with $\overline{K(s)}$; further, each of these sheaves $\overline{K(w)}$ is a pullback under a multiplication map $G/U \times G/U \to U\backslash G/U$, with their convolution agreeing with a suitably-defined convolution of sheaves on $U\backslash G/U$. As a result, the subalgebra of endomorphisms of $K_0(G/U)$ generated by symplectic Fourier-Deligne transforms is isomorphic to the algebra $\mathrm{KL}(v) \subset K_0(U \backslash G/U)$ generated by classes of Kazhdan-Laumon sheaves under convolution, and this is the perspective we will take on this algebra for the purposes of this paper.

In \cite{AJ}, the authors define a certain diagrammatic algebra called the algebra of braids and ties (or sometimes the ``bt-algebra") over the field $\mathbb{C}(v)$, generalizing certain properties of the Yokonuma-Hecke algebra of Type $A_n$ to the case of a generic parameter. In \cite{MarYoko}, the author constructs a generalization (called $C_W^R$ in loc. cit.) of this algebra to arbitrary Coxeter type, which we call the generalized algebra of braids and ties and denote by $\mathcal{E}(v)$. In loc. cit., the author also considers a natural subalgebra which we call $\mathcal{C}(v) \subset \mathcal{E}(v)$ which is the image of a natural homomorphism $\mathbb{C}(v)[B_W] \to \mathcal{E}(v)$. In the next result, we show that Kazhdan-Laumon sheaves under convolution categorify $\mathcal{C}(v)$, answering the aforementioned question asked in \cite{P}. Further, we show that the whole algebra $\mathcal{E}(v)$ can be obtained by the convolution algebra generated by a modest enlargement of this Kazhdan-Laumon algebra, i.e. by Kazhdan-Laumon sheaves in addition to certain other sheaves $E(s)$ introduced in Definition \ref{def:es} for each simple reflection $s$.

\begin{theorem}\label{thm:mainthm}
Let $\mathrm{KL}(v)$ be the algebra sitting inside $K_0(G/U)\otimes \mathbb{C}(v)$ generated by the classes of Kazhdan-Laumon sheaves $\overline{K(w)}$ for all $w \in W$ under convolution. Let $\widetilde{\mathrm{KL}}(v)$ be the algebra generated by the classes of Kazhdan-Laumon sheaves in addition to the sheaves $E(s)$ for all $s \in S$. 
\begin{enumerate}[label=(\alph*)]
    \item The algebra $\widetilde{\mathrm{KL}}(v)$ is isomorphic to the generalized algebra of braids and ties $\mathcal{E}(v)$, which has generators and relations given in Definition \ref{def:evcv}.
    \item The algebra $\mathrm{KL}(v)$ is isomorphic to the distinguished subalgebra $\mathcal{C}(v)$ of $\mathcal{E}(v)$ generated only by the generators $g_s$, $s \in S$.
\end{enumerate}
\end{theorem}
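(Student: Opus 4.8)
The plan is to establish both isomorphisms by constructing explicit algebra homomorphisms in each direction and checking they are mutually inverse, but the conceptual core is to match generators-and-relations on the algebraic side with the convolution structure on the geometric side. Since part (b) concerns the subalgebra of $\mathcal{E}(v)$ generated by the $g_s$, and part (a) the full algebra, I would prove (a) first and then deduce (b) by restricting the isomorphism to the subalgebra generated by the images of the $g_s$, provided I can show these images are exactly the classes $\overline{K(s)}$ (or the Kazhdan--Laumon generators), so that the subalgebra they generate is precisely $\mathrm{KL}(v)$.

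First I would pin down, on the geometric side, the multiplicative structure of $K_0(U\backslash G/U)$ under convolution and identify a spanning set of classes indexed by combinatorial data matching the diagrammatic basis of $\mathcal{E}(v)$ from Definition~\ref{def:evcv}. The natural candidate is to show that the classes $\overline{K(w)}$ together with the $E(s)$ satisfy exactly the defining relations of $\mathcal{E}(v)$: the cubic (or quadratic, depending on normalization) relation in each $g_s$ already provided by Polishchuk's computation, the braid relations, and the relations governing the ``tie'' generators coming from the $E(s)$. Each such relation should be checkable as an identity of convolutions of perverse sheaves (or their classes in the Grothendieck group), reducing to a finite computation supported on the relevant strata $X(w)$ and their closures. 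This gives a surjective homomorphism $\mathcal{E}(v)\to\widetilde{\mathrm{KL}}(v)$ sending $g_s\mapsto[\overline{K(s)}]$ and the tie generators to the classes $[E(s)]$.

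To upgrade surjectivity to an isomorphism I would compare dimensions, or rather compare the sizes of natural spanning sets. On the algebraic side $\mathcal{E}(v)$ has a known basis (from Marin's work), so it suffices to show that the corresponding classes on the geometric side are linearly independent in $K_0(G/U)\otimes\mathbb{C}(v)$. The cleanest route is to exhibit a faithful module or a separating family of ``weight'' functionals: for instance, evaluating the convolution action on a well-chosen set of standard classes in $K_0(G/U)$, or using the stratification of $U\backslash G/U$ to read off leading terms on each stratum $X(w)$, giving an upper-triangularity that forces independence. If such a leading-term argument works, the induced map on spanning sets is unitriangular and hence the homomorphism is injective, completing part (a).

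The hard part will be controlling the convolution of the non-invertible ``tie'' sheaves $E(s)$ and verifying that their classes satisfy precisely Marin's relations rather than some strictly weaker or stronger set; unlike the $\overline{K(s)}$, these have no braid-group origin, so their relations must be extracted directly from geometry, and the idempotent-like behavior of ties is exactly where spurious relations could appear or be missed. For part (b), the remaining subtlety is to verify that the subalgebra generated by the $[\overline{K(w)}]$ alone equals the image of the subalgebra $\mathcal{C}(v)$ generated by the $g_s$; this amounts to checking that no $E(s)$ is needed to generate the $\overline{K(w)}$ for non-simple $w$, which should follow from the fact (used in \cite{KL}) that each $\overline{K(w)}$ is obtained from the simple ones by convolution, so that $\mathrm{KL}(v)$ is generated by the $[\overline{K(s)}]=g_s$ and the isomorphism of part (a) restricts to $\mathcal{C}(v)\cong\mathrm{KL}(v)$.
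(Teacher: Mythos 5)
Your overall architecture (build a map between the two algebras by matching generators, then fight for injectivity) is reasonable in outline, but it is not the route the paper takes, and the two places you yourself flag as ``the hard part'' are exactly where your plan has genuine gaps that the paper's argument is designed to avoid.

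First, the nature of the map. You propose to define $\mathcal{E}(v)\to\widetilde{\mathrm{KL}}(v)$ by verifying Marin's relations for the classes $[\overline{K(s)}]$ and $[E(s)]$ directly in the Grothendieck group. That requires, among other things: the quadratic relation $g_s^2=1+(v^2-1)e_s(1+g_s)$, which is strictly finer than Polishchuk's cubic relation and genuinely couples $\overline{K(s)}$ with $E(s)$; the conjugation relations $g_se_r=e_{srs}g_s$ for \emph{all} reflections $r$, which forces you to first define classes $e_r$ for non-simple reflections and prove well-definedness; and the relation $e_J=e_{J'}$ whenever $p(J)=p(J')$ from Definition~\ref{def:evcv}. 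None of these is addressed beyond ``should be checkable on the strata.'' The paper never checks any relation in $K_0$ directly. Instead it embeds both algebras into the common algebra $\prod_{k\ge 1}\mathbb{C}[(G/U)(\mathbb{F}_{q^k})]$: the sheaf side via the trace-of-Frobenius map, injective by Laumon's theorem (Proposition~\ref{prop:lau}), and the algebraic side via the specializations $v\mapsto q^{k/2}$ into the Yokonuma--Hecke algebras $\mathrm{Y}_{q,k}$. The only computation needed is Proposition~\ref{prop:maincomputation}: convolution with $\overline{K(s)}$ acts on functions as Juyumaya's generator $L_s$ (up to sign) and $E(s)$ acts as $E_s$. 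All relations are then inherited for free from the known presentation of $\mathrm{Y}_{q,k}$ (Proposition~\ref{prop:yokojuyu}), and the two algebras are identified as literally the same subalgebra of the ambient one, so the isomorphism on generators is automatic.

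Second, and more seriously, the injectivity step. You propose a ``leading term on strata'' unitriangularity argument to show the images of Marin's basis are linearly independent in $K_0(G/U)\otimes\mathbb{C}(v)$. This is the crux of the theorem and your sketch does not supply it: the elements in question are iterated convolutions of the $\overline{K(s)}$ and the $E(s)$, the latter supported on the one-dimensional subvarieties $T_s$, and convolution does not visibly preserve any triangular support structure indexed by the combinatorial data of $\mathcal{E}(v)$ (which involves pairs of a Weyl group element and a reflection subgroup, not just the strata $X(w)$). The paper replaces this entire step by two quotable injectivity results: Laumon's injectivity of $\mathrm{tr}$ taken over all finite extensions at once, and Marin's theorem that $\mathcal{E}_{q'}\to\mathrm{Y}_{q'}$ is injective whenever $q'-1$ has a sufficiently large prime factor, refined via Fermat's little theorem to produce an injective specialization $v\mapsto q^{a/2}$. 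Without an input of this kind your plan does not close. Your treatment of part (b) is fine and matches the paper: once (a) is established with $\mathsf{a}_s\leftrightarrow -g_s$, restricting to the subalgebras generated by these elements gives $\mathrm{KL}(v)\cong\mathcal{C}(v)$.
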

This upgrades Polishchuk's observation that $\mathrm{KL}(v)$ is a quotient of $\mathcal{H}^c_W$ and provides an explicit description of this algebra.

The generalized braids and ties algebra $\mathcal{E}(v)$ was defined as a generic-parameter version of the Yokonuma-Hecke algebra $\mathrm{Y}_{q,k}$ over $\mathbb{F}_{q^k}$, as defined in \cite{Yoko}. In \cite{Juyu} and \cite{JuyKan}, the authors gave a new presentation of $\mathrm{Y}_{q,k}$ which has since been used fruitfully in the study of this algebra and the algebra of braids and ties (e.g. in \cite{SRH} and \cite{SRH2}). This presentation uses a generating set $\{L_s\}_{s \in S}\cup \{R_t\}_{t \in T(\mathbb{F}_{q^k})}$ with relations given in Proposition \ref{prop:yokojuyu}.

We note that in \cite{Juyu} and \cite{JuyKan}, to complete their proof of the relations satisfied by their new generators for $\mathrm{Y}_{q,k}$, the authors prove case-by-case that the generators $L_s$ satisfy braid relations, proving this for simply-laced types in \cite{Juyu} and then for all other types in \cite{JuyKan}. On the other hand, combining our Theorem \ref{thm:mainthm} with a result from \cite{KL} gives a type-independent proof of this fact using the geometry of perverse sheaves on $G/U\times G/U$, avoiding the algebraic calculations done in loc. cit.

\begin{corollary}[First proved in \cite{Juyu}, \cite{JuyKan}]
The generators $\{L_s\}_{s \in S}$ of $\mathrm{Y}_{q,k}$ introduced by Juyumaya in \cite{Juyu} satisfy braid relations.
\end{corollary}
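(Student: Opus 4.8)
The plan is to deduce the corollary as a direct consequence of Theorem \ref{thm:mainthm} together with the geometric input from \cite{KL}. The key observation is that the braid relations for the generators $\{L_s\}_{s \in S}$ of $\mathrm{Y}_{q,k}$ should be transported from the braid relations that the generators $g_s$ of $\mathcal{C}(v)$ (equivalently, the Kazhdan-Laumon classes $\overline{K(s)}$ in $\mathrm{KL}(v)$) satisfy. So first I would recall precisely how Juyumaya's presentation of $\mathrm{Y}_{q,k}$ relates to the generic-parameter algebra $\mathcal{E}(v)$: by construction in \cite{MarYoko}, $\mathcal{E}(v)$ is the generic-parameter version of $\mathrm{Y}_{q,k}$, and under specialization the generators $g_s$ of $\mathcal{C}(v)$ correspond to the Juyumaya generators $L_s$. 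Thus a braid relation satisfied by the $g_s$ specializes to the corresponding braid relation among the $L_s$.

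Next I would supply the geometric source of the braid relations for the $g_s$. By Theorem \ref{thm:mainthm}(b), $\mathrm{KL}(v) \cong \mathcal{C}(v)$, with the Kazhdan-Laumon class $\overline{K(s)}$ sent to $g_s$. It therefore suffices to prove that the classes $\overline{K(s)}$ satisfy braid relations under convolution in $K_0(U\backslash G/U)$. This is exactly where I would invoke the result of \cite{KL}: Kazhdan and Laumon show that the sheaves $\overline{K(w)}$ are compatible with the braid group action, so that for any reduced word $w = s_{i_1}\cdots s_{i_\ell}$ one has an isomorphism $\overline{K(s_{i_1})} * \cdots * \overline{K(s_{i_\ell})} \cong \overline{K(w)}$, with the right-hand side depending only on $w$ and not on the chosen reduced expression. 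Applying this to the two reduced words for the longest element of a rank-two parabolic $\langle s, s'\rangle$ yields precisely the braid relation $\overline{K(s)}*\overline{K(s')}*\cdots = \overline{K(s')}*\overline{K(s)}*\cdots$ (with $m_{ss'}$ factors on each side) at the level of sheaves, hence in $K_0$.

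Combining these two steps completes the argument: the braid relations hold for the classes $\overline{K(s)}$ by the well-definedness of $\overline{K(w)}$ on reduced words, they transport through the isomorphism of Theorem \ref{thm:mainthm}(b) to braid relations for the $g_s$ in $\mathcal{C}(v)$, and these specialize to the braid relations for Juyumaya's generators $L_s$ in $\mathrm{Y}_{q,k}$. The point worth emphasizing is that the entire argument is type-independent: the braid relations are inherited from the geometry of the Kazhdan-Laumon stratification of $G/U \times G/U$, which is uniform across all Coxeter types, so no case-by-case verification of the kind carried out in \cite{Juyu} and \cite{JuyKan} is required.

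I expect the main obstacle to be verifying carefully that the specialization map from $\mathcal{E}(v)$ (equivalently $\mathcal{C}(v)$) to $\mathrm{Y}_{q,k}$ indeed sends $g_s$ to $L_s$ and is compatible with the relevant relations, since the braid relation must survive specialization of the parameter $v$ at a root of unity or prime power. In particular I would need to check that specialization does not collapse or alter the braid relation — but since a braid relation is a relation with integer coefficients independent of $v$, it is automatically preserved under any ring homomorphism $\mathbb{C}(v) \to \mathbb{F}_{q^k}$-algebra specialization, so this obstacle is largely bookkeeping rather than a genuine mathematical difficulty.
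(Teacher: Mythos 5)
Your argument is circular at the step where you pass from $\mathcal{C}(v)$ to $\mathrm{Y}_{q,k}$. The braid relations for the $g_s$ are \emph{imposed} in the defining presentation of $\mathcal{E}(v)$ (Definition \ref{def:evcv}), so they cost nothing and require no geometric input; the entire content of the corollary is concentrated in the claim that there is a well-defined algebra map $\mathcal{C}(v)\to \mathrm{Y}_{q,k}$ (equivalently $\mathcal{E}_{q^k}\to\mathrm{Y}_{q,k}$) sending $g_s\mapsto -L_s$. But the existence of such a homomorphism is \emph{equivalent} to the statement that the $L_s$ (together with the $E_s$) satisfy the defining relations of $\mathcal{E}_{q^k}$, braid relations included: in \cite{MarYoko} this map is constructed precisely by appealing to Juyumaya's presentation (Proposition \ref{prop:yokojuyu}), whose proof in \cite{Juyu} and \cite{JuyKan} is the case-by-case verification the corollary is meant to replace. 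Your closing remark that compatibility of the specialization with ``the relevant relations'' is largely bookkeeping dismisses exactly the point at issue. The same objection applies to invoking Theorem \ref{thm:mainthm}(b) here: as proved in the paper, the injectivity of the comparison map $i$ rests on Marin's embedding $\mathcal{E}_{q'}\hookrightarrow\mathrm{Y}_{q'}$, which again presupposes Juyumaya's presentation.

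The non-circular route, which is the one the paper takes, replaces the abstract specialization map by the sheaf--function dictionary: Proposition \ref{prop:maincomputation} shows by a direct computation on the functions $\varepsilon_\theta$ (using only the explicit formula (\ref{eqn:convfun}) for convolution with $\overline{K(s)}$ on functions, and nowhere using any relation among the $L_s$) that convolution with $\mathrm{tr}_{q^k}(\overline{K(s)})$ is literally the operator $L_s$ on $\mathbb{C}[\gufin]$. Since $\mathrm{tr}_{q^k}$ intertwines convolution of sheaves with convolution of functions, the isomorphisms $\overline{K(s_{i_1})}*\cdots*\overline{K(s_{i_\ell})}\cong\overline{K(w)}$ from \cite{KL} --- your second step, which is correct and is indeed the type-independent geometric input --- immediately give the braid relations for the $L_s$. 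So your proposal has the right two ingredients but joins them through the presentation of $\mathcal{C}(v)$ rather than through Proposition \ref{prop:maincomputation}; as written, the bridge you use already assumes the conclusion.
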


The following is a diagram summarizing the relationship between the algebras discussed so far, where the two vertical isomorphisms are from Theorem \ref{thm:mainthm}, the arrow $\mathcal{E}(v) \to \mathrm{Y}_{q, k}$ is the specialization map at $v = q^{k/2}$, and the rightmost vertical arrow is the Frobenius trace map $\mathrm{tr}_{q^k}$ which we will describe in Section \ref{subsec:shfun}.
\[
    \begin{tikzcd}
        \mathrm{KL}(v) \arrow[r, hook]\arrow[d, "\sim" style={anchor=north, rotate=90, xshift=1pt}] & \widetilde{\mathrm{KL}}(v) \arrow[r, hook]\arrow[d, "\sim" style={anchor=north, rotate=90, xshift=1pt}] & K_0(G/U) \arrow[dd, two heads]\\
        \mathcal{C}(v) \arrow[r, hook] & \mathcal{E}(v) \arrow[d, two heads]& \\
        & \mathrm{Y}_{q,k} \arrow[r, hook] & \mathbb{C}[(G/U)(\mathbb{F}_{q^k})]
    \end{tikzcd}
\]

We conclude by explaining two more algebraic consequences, both of which arise from questions posed in \cite{MarYoko}. One of the motivating facts about the algebra $\mathcal{C}(v)$ is that it admits a natural quotient map to the Hecke algebra $\mathcal{H}$ by sending $g_s \mapsto -A_s$, where $A_s$ is the standard generator of the Hecke algebra associated to $s \in S$. In the next result, we show that actually this algebra admits a morphism to each monodromic Hecke algebra $\mathbf{H}_{\mathfrak{o}}$ associated to a $W$-orbit $\mathfrak{o}$ of character sheaves on $T$ as described in \cite{LY}.
\begin{theorem}
Let $\mathbf{H}_{\mathfrak{o}}$ be the monodromic Hecke algebra associated to an orbit $\mathfrak{o}$.
\begin{enumerate}[label=(\alph*)]
    \item For any character sheaf $\mathcal{L}$, convolution with $[\Delta(e)_{\mathcal{L}}]$ induces a morphism $\pi_{\mathcal{L}} : \mathrm{KL}(v) \to \mathbf{H}_{\mathfrak{o}}$. In particular, $\pi = \pi_{{\mathcal{L}}_{\mathrm{triv}}}$ is the quotient map to the usual Hecke algebra.
    \item There exists a natural lift $\mathsf{c}_{w}$ in $\mathrm{KL}(v)$ of the Kazhdan-Lusztig basis $C_w$ under the map $\pi$ which satisfies a self-duality condition with respect to a natural involution on $\mathrm{KL}(v)$.
\end{enumerate}
    
\end{theorem}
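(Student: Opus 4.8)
The plan is to prove both parts geometrically, using that $\mathrm{KL}(v)$ is realized by convolution of Kazhdan--Laumon sheaves on $U\backslash G/U$ and that the monodromic Hecke algebras $\mathbf{H}_{\mathfrak{o}}$ of \cite{LY} arise from the \emph{same} space once one remembers the residual right $T$-action and imposes an $\mathcal{L}$-monodromy condition along it. For part (a), I would first recall that $\mathbf{H}_{\mathfrak{o}}$ is the convolution algebra of $\mathcal{L}$-monodromic standard objects $\Delta(w)_{\mathcal{L}}$, whose unit is the object $\Delta(e)_{\mathcal{L}}$ supported on the identity stratum and carrying monodromy $\mathcal{L}$ along the $T$-direction. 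The map is then defined on classes by $\pi_{\mathcal{L}}\colon a\mapsto a\ast[\Delta(e)_{\mathcal{L}}]$, and its image is exactly $\mathrm{KL}(v)\ast[\Delta(e)_{\mathcal{L}}]=\mathbf{H}_{\mathfrak{o}}$.

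The content of part (a) is that $\pi_{\mathcal{L}}$ is multiplicative. I would deduce this from the statement that $[\Delta(e)_{\mathcal{L}}]$ is a \emph{left-semicentral idempotent} for the convolution action of $\mathrm{KL}(v)$, i.e. it satisfies $[\Delta(e)_{\mathcal{L}}]\ast[\Delta(e)_{\mathcal{L}}]\cong[\Delta(e)_{\mathcal{L}}]$ and $b\ast[\Delta(e)_{\mathcal{L}}]=[\Delta(e)_{\mathcal{L}}]\ast b\ast[\Delta(e)_{\mathcal{L}}]$ for all $b\in\mathrm{KL}(v)$; geometrically the latter says that convolving a Kazhdan--Laumon class on the right with $\Delta(e)_{\mathcal{L}}$ already renders it left-$\mathcal{L}$-monodromic, so that a further left convolution with $\Delta(e)_{\mathcal{L}}$ changes nothing. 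Granting this, associativity gives
\[
(a\ast b)\ast[\Delta(e)_{\mathcal{L}}]
=a\ast\bigl(b\ast[\Delta(e)_{\mathcal{L}}]\bigr)
=a\ast\bigl([\Delta(e)_{\mathcal{L}}]\ast b\ast[\Delta(e)_{\mathcal{L}}]\bigr)
=\bigl(a\ast[\Delta(e)_{\mathcal{L}}]\bigr)\ast\bigl(b\ast[\Delta(e)_{\mathcal{L}}]\bigr),
\]
which is precisely the homomorphism property. The ``in particular'' clause is then the observation that for $\mathcal{L}=\mathcal{L}_{\mathrm{triv}}$ the monodromy condition becomes $T$-equivariance, $\Delta(e)_{\mathrm{triv}}$ is the constant sheaf on the $T$-orbit, $\mathbf{H}_{\mathfrak{o}}$ specializes to the ordinary Hecke algebra, and a direct rank-one computation of $\pi(g_s)$ recovers the quotient $g_s\mapsto -A_s$ already appearing in Theorem \ref{thm:mainthm}.

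For part (b), I would first define the natural involution on $\mathrm{KL}(v)$ as the $\mathbb{C}$-algebra involution induced by Verdier duality on Kazhdan--Laumon sheaves together with $v\mapsto v^{-1}$ (tracking Tate twists). Because the sheaves $\overline{K(w)}$ are by definition Goresky--MacPherson extensions, they are Verdier self-dual up to shift and twist, so each class $[\overline{K(w)}]$ is bar-invariant; this yields the candidate lift $\mathsf{c}_w:=[\overline{K(w)}]$, for which self-duality is immediate. To see that $\mathsf{c}_w$ lifts $C_w$, I would use that $\pi=\pi_{\mathcal{L}_{\mathrm{triv}}}$ is convolution with $\Delta(e)_{\mathrm{triv}}$, which geometrically averages along the right $T$-action and hence descends sheaves from $U\backslash G/U$ to $B\backslash G/B$; as this averaging is, up to smooth pullback along a $T$-bundle, $t$-exact and carries intersection-cohomology extensions to intersection-cohomology extensions, $\pi(\mathsf{c}_w)$ is the class of the Schubert IC sheaf on the flag variety, namely $C_w$. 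Since $\pi$ intertwines the two bar involutions (both being Verdier duality), the bar-invariance of $\mathsf{c}_w$ is consistent with that of $C_w$, and pins down $\mathsf{c}_w$ as the canonical self-dual lift.

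The \textbf{main obstacle} in (a) is establishing the semicentral-idempotent property of $[\Delta(e)_{\mathcal{L}}]$: this is where the precise monodromic geometry of \cite{LY} along the $T$-direction must be brought in, and where one must be careful that the Kazhdan--Laumon category carries no naive glued perverse $t$-structure, so that the relevant identities are proved at the level of $K_0$ rather than in the heart. The \textbf{main obstacle} in (b) is the on-the-nose identification $\pi(\mathsf{c}_w)=C_w$: one must control how right convolution with $\Delta(e)_{\mathrm{triv}}$ interacts with the IC normalization of $\overline{K(w)}$ and verify that no correction term appears. I expect the cleanest route is to reduce to simple reflections, where $\overline{K(s)}$ and its image under $\pi$ are computed directly, and then to propagate using the braid-positivity and recursion already established for the generators $g_s$ in the proof of Theorem \ref{thm:mainthm}.
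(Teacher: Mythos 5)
Your route to part (a) is genuinely different from the paper's, and its key step does not hold up. The paper proves (a) by invoking the faithfulness of the Frobenius-trace map (Proposition \ref{prop:lau}) to reduce everything to functions on $\gufin$, identifying $\mathrm{tr}_{q^k}([\Delta(e)_{\mathcal{L}}])$ with the function $\varepsilon_\theta$, and then literally reading off the two cases $s\in W_{\mathcal{L}}^\circ$ and $s\notin W_{\mathcal{L}}^\circ$ from the Case 1/Case 2 computations already carried out in Proposition \ref{prop:maincomputation}; no abstract idempotent argument is needed. Your substitute, the ``left-semicentral idempotent'' identity $b*[\Delta(e)_{\mathcal{L}}]=[\Delta(e)_{\mathcal{L}}]*b*[\Delta(e)_{\mathcal{L}}]$, is not only left unproved (you flag it as the main obstacle) but is false as stated for a non-$W$-invariant $\mathcal{L}$: convolving a Kazhdan--Laumon class for $w$ on the right with $\Delta(e)_{\mathcal{L}}$ produces an object whose \emph{left} monodromy is $w\mathcal{L}$, not $\mathcal{L}$ (this is exactly what the relation $A_w1_{\mathcal{L}}=1_{w\mathcal{L}}A_w$ and the formula $\pi_{\mathcal{L}}(\mathsf{a}_s)=-\tilde{A}_s1_{\mathcal{L}}=-1_{s\mathcal{L}}\tilde{A}_s$ record), so a further left convolution with $\Delta(e)_{\mathcal{L}}$ annihilates it rather than fixing it whenever $w\mathcal{L}\neq\mathcal{L}$. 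Any repair has to involve the full orbit idempotent $\sum_{\mathcal{L}'\in\mathfrak{o}}[\Delta(e)_{\mathcal{L}'}]$, at which point one is essentially forced into the explicit computation the paper does anyway.

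Part (b) contains the more serious error: the proposed lift $\mathsf{c}_w:=[\overline{K(w)}]=\mathsf{a}_w$ is not a lift of $C_w$ and is not self-dual. The paper's computation gives $\pi(\mathsf{a}_s)=-v\tilde{A}_s^{-1}$, i.e.\ (a normalization of) a standard generator, not $C_s=\tilde{A}_s-v$; convolution with $\Delta(e)_{\mathcal{L}_{\mathrm{triv}}}$ does not carry the Artin--Schreier extension $\overline{K(s)}$ to the Schubert IC sheaf, since averaging along the $T$-direction extracts the trivial-isotypic part of a nonconstant local system and does not preserve IC extensions. Moreover the relevant involution is Marin's $\overline{g_s}=g_s^{-1}$, $\overline{v}=v^{-1}$, transported to $\overline{\mathsf{a}_s}=\mathsf{a}_s^{-1}$; since $\mathsf{a}_s$ satisfies a genuine cubic rather than $\mathsf{a}_s^2=1$, one has $\mathsf{a}_s^{-1}\neq\mathsf{a}_s$, so $[\overline{K(s)}]$ is not bar-fixed, and your inference from Verdier self-duality conflates two different involutions. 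The actual content of (b), which your proposal skips, is that one must take the specific combination $\mathsf{c}_s=(\mathsf{a}_s^2-1)/(v-v^3)$, check by hand that it equals $[\mathrm{IC}(s)]$, is fixed by the bar involution (via the cubic relation $(\mathsf{a}_s^2-1)(\mathsf{a}_s+v^2)=0$), and maps to $C_s$, and then show $[\mathrm{IC}(w)]\in\mathrm{KL}(v)$ for general $w$ by induction on length using Lemma \ref{lem:cmult}, setting $\mathsf{c}_w=[\mathrm{IC}(w)]$.
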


Finally, in \cite{MarYoko}, the author studies the algebra $\mathcal{C}(v)$ more explicitly in Type $A$ and discusses some connections to the Links-Gould invariant and its defining algebra which was studied in \cite{LinksGould}. In Section 4.3 of \cite{MarYoko}, Marin computes that $\dim \mathcal{C}_{A_1} = 3$, $\dim \mathcal{C}_{A_2} = 20$, and provides some computational evidence that $\dim \mathcal{C}_{A_3}(v) = 217$, $\dim \mathcal{C}_{A_4}(v) = 3364$, noting that this sequence does not exist in the OEIS and asking whether a general formula exists for the dimension. We conclude our paper by confirming Marin's computational results and extending them by providing a general formula for the dimension of $\mathcal{C}_{A_n}(v)$ (and therefore also of the symplectic Fourier transform algebra $\mathrm{KL}(v)$; note that even its finite-dimensionality is not obvious a priori without Theorem \ref{thm:mainthm}) in Type $A_n$ for all $n \geq 1$.

\begin{theorem}
When considered as an algebra over $\mathbb{C}(v)$, there is an explicit formula for the dimension of the algebra $\mathcal{C}(v)$ given by
\begin{align}
    \dim \mathcal{C}(v) = |W|\sum_{I \subset S} D_I/N_I
\end{align}
where $N_I$ is the size of the normalizer of the standard parabolic subgroup of $W$ associated to $I \subset S$, and $D_I$ is the number of elements of $w \in W$ such that for each contiguous block $I' \subset I$, there is a simple reflection $s \in I'$ for which $\ell(ws) < \ell(w)$.

This gives a general formula which continues the sequence of dimensions
\begin{align*}
    1, ~ 3, ~ 20, ~ 217, ~ 3364, \dots
\end{align*}
given in \cite{MarYoko}.
\end{theorem}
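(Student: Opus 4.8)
The plan is to compute $\dim \mathcal{C}(v)$ by realizing $\mathcal{C}(v)$ as a subalgebra of $\mathcal{E}(v) \cong \widetilde{\mathrm{KL}}(v)$ (Theorem \ref{thm:mainthm}) and counting which elements of an explicit basis of $\mathcal{E}(v)$ actually lie in it. First I would invoke the monomial basis of $\mathcal{E}(v)$ from \cite{MarYoko} (extending the Type $A$ basis of \cite{AJ}): it consists of products $E_P \cdot T_w$ with $w \in W$ and $E_P$ an idempotent in the commutative subalgebra generated by the tie elements $e_\alpha$. In Type $A_n$ these idempotents are in bijection with parabolic subsystems of the root system (equivalently set partitions of $\{1,\dots,n+1\}$), and each carries a type given by the $W$-conjugacy class of a standard parabolic $W_I$. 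Because this basis is compatible with the inclusion $\mathcal{C}(v) \subseteq \mathcal{E}(v)$, the problem reduces to deciding exactly which pairs $(E_P, w)$ contribute a basis vector of the subalgebra generated by the $g_s$; note that producing such a finite list will in particular reprove that $\mathrm{KL}(v)$ is finite-dimensional.

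The heart of the argument — and the step I expect to be the main obstacle — is the membership criterion: I claim $E_P \cdot T_w \in \mathcal{C}(v)$ if and only if $w$ admits a descent in each maximal contiguous block $I'$ of the support of $P$, i.e. $l(ws) < l(w)$ for some $s \in I'$. The guiding principle is that idempotents are created only through the quadratic relation: a factor $g_s^2$ contributes $e_{\alpha_s}$, and conjugating by braid words produces $e_\alpha$ for the remaining reflections of a block, so that within a connected block $I'$ a single descent suffices to generate the full connected idempotent $E_{I'}$, while distinct blocks must be activated independently. To make this rigorous I would argue the ``only if'' direction by induction on length, controlling which idempotents can appear in a product of generators via the filtration of $\mathcal{C}(v)$ by idempotent rank (whose rank-zero layer recovers the image of $\pi$, namely the Hecke algebra $\mathcal{H}$ of dimension $|W|$), and the ``if'' direction by exhibiting for each admissible pair an explicit monomial in the $g_s$ whose leading term in this filtration is $E_P \cdot T_w$. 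Linear independence of the admissible vectors is inherited from the basis of $\mathcal{E}(v)$, so spanning together with independence identifies them as a basis of $\mathcal{C}(v)$.

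With the criterion in hand, the count is bookkeeping. For a fixed idempotent $E_P$ of type $[I]$ the number of compatible $w$ depends only on the type and equals $D_I$, the number of $w \in W$ with a descent in every contiguous block of $I$; this invariance under conjugation is what lets me replace an arbitrary parabolic subsystem by its standard representative, since for a non-standard conjugate $xW_Ix^{-1}$ the corresponding condition is the $x$-conjugate of the block descent condition and is satisfied by the same number $D_I$ of elements by relabeling. Grouping the idempotents by type, the number of parabolic subsystems conjugate to $W_I$ is the orbit size $|W|/N_I$, so summing over the conjugacy classes of standard parabolics gives
\[
\dim \mathcal{C}(v) = \sum_{[I]} \frac{|W|}{N_I}\, D_I = |W| \sum_{I} \frac{D_I}{N_I},
\]
which is the asserted formula. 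Finally I would evaluate this expression in Types $A_1$ through $A_4$, confirming the values $3, 20, 217, 3364$ reported in \cite{MarYoko} and thereby extending the sequence to all $n \geq 1$.
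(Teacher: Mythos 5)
Your counting skeleton is the same as the paper's: group the reflection (parabolic) subsystems by conjugacy class, use orbit size $|W|/N_I$, and multiply by a count $D_I$ depending only on the type. The gap is in the step you yourself flag as the heart of the argument, and it is not merely hard to make rigorous --- the membership criterion as stated is false. Already in Type $A_1$, the element $g_se_s$ (your $E_P\cdot T_s$ for the unique nontrivial $P$) does \emph{not} lie in $\mathcal{C}(v)$, even though $w=s$ has a descent in the block: expanding powers of $g_s$ using $g_s^2 = 1 + (v^2-1)e_s(1+g_s)$, one sees that $e_s$ and $g_se_s$ only ever occur in the combination $(1+g_s)e_s$, so $\mathcal{C}_{A_1}(v)=\mathrm{span}\{1,\,g_s,\,(1+g_s)e_s\}$ and neither monomial basis vector $e_s$ nor $g_se_s$ belongs to it. Consequently your premise that the monomial basis $\{E_P\cdot T_w\}$ of $\mathcal{E}(v)$ is ``compatible with the inclusion $\mathcal{C}(v)\subseteq\mathcal{E}(v)$'' fails: a subalgebra need not be spanned by a subset of a given basis, and here it is not. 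Any argument that proceeds by deciding which individual $E_P\cdot T_w$ lie in $\mathcal{C}(v)$ cannot produce the right answer.

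The repair is essentially what the paper does, and your filtration remark points in the right direction without landing on it. Instead of single monomials, one works with the subspaces $J_Re_R$ spanned by elements $g_w(1+g_{r_1})\cdots(1+g_{r_\ell})e_R$ with the $r_i$ in distinct contiguous components of $R$; one shows these lie in $\mathcal{C}(v)$ (by an induction on the rank of contiguous components, using conjugacy of $A_2$-subsystems) and that they span $\mathcal{C}(v)$ (by pushing all $g_s$'s to the left with the quadratic and conjugation relations). The number $D_I$ then arises not as a count of admissible monomials but as the dimension of the left ideal of $\mathbb{C}[W]$ generated by $(1+r_1)\cdots(1+r_k)$, computed via the quotient being spanned by the classes of $w$ with no descent in some contiguous block; this is where the descent condition genuinely enters. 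Your final bookkeeping (conjugation invariance of $D_I$, orbit count $|W|/N_I$, and the numerical checks for $n\le 4$) is fine once this middle step is replaced.
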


In conclusion, we hope that this paper will serve two purposes. First, it serves as an explicit algebraic identification of the algebra generated by symplectic Fourier-Deligne transforms in terms of a known finite-dimensional algebra -- a project begun by Polishchuk's identification of the cubic relations in $\mathrm{KL}(v)$ more than two decades ago. In future work, we hope to exploit this identification to prove some of the conjectures about representations of the braid group factoring through the symplectic Fourier transform algebra which were originally posed in \cite{P}. Secondly, our main result gives a geometric interpretation for some algebras familiar to experts in the theory of braids and knots with potential for applications to some purely algebraic questions. This paper contains some basic examples of such algebraic questions which we answer via this geometric perspective, and we hope that further study of this algebra will be enhanced by this interpretation which links the algebraic perspective to the rich set of tools which become available in the context of perverse sheaves and algebraic geometry.


\subsection{Acknowledgments} I would like to thank Ivan Marin and Louis Funar for pointing me in the right direction in the early stages of my attempt to explicitly identify the algebra generated by symplectic Fourier transforms. I would also like to acknowledge Minh-Tam Trinh and Elijah Bodish for their mentorship and many inspiring discussions which played a key role in guiding the directions pursued in this paper, and for helpful comments on a draft. I would also like to thank my advisor, Roman Bezrukavnikov, for introducing me to Kazhdan-Laumon's construction and for his feedback and support. I gained much from conversations with Alexander Polishchuk and Ben Elias during the writing of this paper. I would also like to thank the referees, one of whose suggestions provided simpler and more conceptual proofs of Propositions \ref{prop:yoko} and \ref{prop:monodrom}. Finally, I acknowledge the support of the Natural Sciences and Engineering Research Council of Canada (NSERC).

\section{Preliminaries}

Let $q$ be a power of some odd prime $p$, let $k = \mathbb{F}_{q}$, and let $G$ be a split semisimple simply-connected algebraic group over $k$. Let $T$ be a Cartan subgroup split over $k$, $B$ a Borel subgroup containing $T$, and $U$ its unipotent radical. Let $X = G/U$ be the basic affine space associated to $G$ considered as a variety over $k$. Let $W$ be the Weyl group, with $S \subset W$ a set of simple reflections. We let $\Phi$ be the set of roots, and we fix a choice of identification of simple reflections $s \in S$ with simple roots $\alpha_s \in S$. For any two simple reflections $s, s' \in S$, we let $m_{ss'}$ be the order of $ss'$ in $W$.

For every simple root $\alpha_s$ we fix an isomorphism of the corresponding one-parameter subgroup $U_s \subset U$ with the additive group $\mathbb{G}_{a,k}$. This uniquely defines a homomorphism $\rho_s : SL_{2,k} \to G$ which induces the given isomorphism of $\mathbb{G}_{a,k}$ (embedded in $SL_{2,k}$ as upper-triangular matrices) with $U_s$; then let
\begin{align*}
    n_s & = \rho_s
    \begin{pmatrix}
        0 & 1\\-1 & 0 
    \end{pmatrix}.
\end{align*}
For any $w \in W$, writing a reduced word $w = s_{i_1} \dots s_{i_k}$ we set $n_w = n_{s_{i_1}} \dots n_{s_{i_k}}$, and one can check that this does not depend on the reduced word. We also define for any $s \in S$ the subtorus $T_s \subset T$ obtained from the image of the coroot $\alpha_s^\vee$ and define $T_w$ for any $w \in W$ to be the product of all $T_s$ ($s \in S$) for which $s \leq w$ in the Bruhat order.

Once and for all, we pick a square root $q^{\frac{1}{2}}$ of $q$, and an additive character $\psi : \mathbb{F}_p \to \mathbb{C}^\times$. Note that this induces natural additive characters of $\mathbb{F}_{q}$ for all powers $q$ of $p$ by composing with the field trace; we will also denote these characters by $\psi$. Let $\mathcal{L}_\psi$ be the associated Artin-Schreier sheaf on $\mathbb{G}_{a, \mathbb{F}_q}$.

We let $K_0(G/U)$ denote the Grothendieck group of the constructible derived category $D^b_c(G/U)$ of mixed $\ell$-adic sheaves on $G/U$. The ``half Tate twist" $(\tfrac{1}{2})$ corresponding to $q^{\frac{1}{2}}$ gives $K_0(G/U)$ the natural structure of a $\mathbb{Z}[v, v^{-1}]$-module. We choose now an isomorphism $\overline{\mathbb{Q}}_{\ell} \cong \mathbb{C}$ and choose to work only with the field $\mathbb{C}$ from now on; in doing so, we will then be able to make reference to complex conjugation. By abuse of notation, we will write simply $K_0(G/U)$ when referring to $K_0(G/U) \otimes_{\mathbb{Z}[v, v^{-1}]} \mathbb{C}(v)$ going forward.

Later we will consider character sheaves $\mathcal{L}$ on the torus, and relatedly multiplicative characters $\theta : T(\mathbb{F}_{q^k}) \to \mathbb{C}^\times$. To either of these objects we can associate a reflection subgroup $W_{\mathcal{L}}^\circ$ or $W_{\theta}^\circ$ of $W$ generated by reflections corresponding to coroots on which $\mathcal{L}$ or $\theta$ is trivial; c.f. \cite{LY} for more details.

Let $\mathcal{H}$ be the usual Hecke algebra over $\mathbb{Z}[v, v^{-1}]$ (which we sometimes consider over $\mathbb{C}(v)$) generated by $\{A_w\}_{w \in W}$, normalized so that the quadratic relation satisfied by any $A_s$ for $s \in S$ takes the form
\begin{align}
A_s^2 & = (v^2 - 1)A_s + v^2.
\end{align}

For any $w \in W$, let $\tilde{A}_w = v^{-\ell(w)}A_w$, so that for any $s \in S$,
\begin{align}\label{eqn:heckerelation}
(\tilde{A}_s - v)(\tilde{A}_s + v^{-1}) & = 0.
\end{align}

There is a natural involution on $\mathcal{H}$ given by $\overline{A_{w}} \mapsto A_{w^{-1}}^{-1}$ and $\overline{v} = v^{-1}$.
Following the conventions of Section 3.5 of \cite{W}, recall the canonical basis $C_w$ of $\mathcal{H}$ given in terms of the Kazhdan-Lusztig polynomials $P_{x,w}$ by
\begin{align*}
C_w & = \sum_{x \leq w} (-1)^{\ell(w) - \ell(x)} v^{\ell(x)-\ell(w)}P_{x,w} \tilde{A}_{x^{-1}}^{-1}
\end{align*}
which satisfy $\overline{C_w} = C_w$.

\subsection{Kazhdan-Laumon sheaves and convolution}\label{sec:klshconv}
In \cite{KL} and \cite{P}, the authors associate to each $w \in W$ an element of $D^b_{c}(G/U\times G/U)$ which is perverse up to shift and irreducible. 

Following \cite{P}, let $X(w) \subset G/U \times G/U$ be the subvariety of pairs $(gU, g'U) \subset (G/U)^2$ such that $g^{-1}g' \in Un_wT_wU$. There is a canonical projection $\mathrm{pr}_w : X(w) \to T_w$ sending $(gU, g'U)$ to the unique $t_w \in T_w$ such that $g^{-1}g' \in Un_wt_wU$. In the case when $w = s \in S$, the morphism $\mathrm{pr}_s : X(s) \to T_s \cong \mathbb{G}_{m, k}$ extends to $\overline{\mathrm{pr}}_s : \overline{X(s)} \to \mathbb{G}_{a,k}$ and we have
\begin{align*}
    \overline{K(s)} = (-\overline{\mathrm{pr}}_s)^* \mathcal{L}_\psi
\end{align*}
and in the case of general $w \in W$
\begin{align}\label{eqn:wfroms}
    \overline{K(w)} & = \overline{K(s_{i_1})} * \dots * \overline{K(s_{i_k})}
\end{align}
where $w = s_{i_1} \dots s_{i_k}$ is a reduced expression. One can take this as the definition of Kazhdan-Laumon sheaves (as it is well-defined by the proposition below and therefore does not depend on the reduced word given the proposition below) or refer to the explicit definition which works for all $w \in W$ at once given in \cite{KL} or \cite{P}.

The following was proved by a geometric argument by Kazhdan and Laumon in their original paper on the subject; it will be the main tool we use to (re)prove the braid relations for Juyumaya's generators of the Yokonuma-Hecke algebra once we show that these generators are categorified by Kazhdan-Laumon sheaves.

\begin{proposition}[\cite{KL}]
    The Kazhdan-Laumon sheaves $\overline{K(s)}$ for $s \in S$ under convolution satisfy the braid relations (up to isomorphism).
\end{proposition}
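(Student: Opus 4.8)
The plan is to prove that the sheaves $\overline{K(s)}$ satisfy braid relations by reducing the statement to a calculation inside the rank-two parabolic subgroups associated to each pair of simple reflections. Since the braid relation for $s, s' \in S$ only involves the subgroup generated by $s$ and $s'$, I would first argue that the convolution of Kazhdan-Laumon sheaves is ``local'' with respect to these rank-two Levi subgroups: the sheaves $\overline{K(s)}$ and $\overline{K(s')}$ are supported on strata $\overline{X(s)}, \overline{X(s')}$ governed entirely by the $SL_2$ (or rank-two) data attached to $\alpha_s$ and $\alpha_{s'}$, so their iterated convolutions factor through the corresponding rank-two group. This reduces the claim to checking it in each of the finitely many rank-two cases: $A_1 \times A_1$, $A_2$, $B_2$, and $G_2$, corresponding to $m_{ss'} = 2, 3, 4, 6$ respectively.

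Next, in the rank-two setting I would use the explicit description of $\overline{K(s)}$ as a pullback of the Artin-Schreier sheaf $\mathcal{L}_\psi$ under the extended projection $-\overline{\mathrm{pr}}_s$. The key identity \eqref{eqn:wfroms} already tells us that both sides of a putative braid relation are convolutions of the form $\overline{K(s_{i_1})} * \cdots * \overline{K(s_{i_k})}$ along two reduced words for the longest element $w_0^{(s,s')}$ of the rank-two parabolic. The natural strategy, then, is to show that the convolution $\overline{K(s_{i_1})} * \cdots * \overline{K(s_{i_k})}$ is canonically isomorphic to the single irreducible sheaf $\overline{K(w_0^{(s,s')})}$, independently of the chosen reduced word. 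Since each $w \in W$ has a well-defined element $n_w$ and torus $T_w$ not depending on the reduced expression, and since the sheaf $\overline{K(w)}$ can be given an intrinsic definition via the stratum $X(w)$ and its projection to $T_w$ (as indicated in the reference to the explicit definition in \cite{KL} or \cite{P}), the content is to verify that iterated convolution recovers this intrinsic object. This amounts to checking that the multiplication maps $Un_{s_{i_1}}T_{s_{i_1}}U \times \cdots \to Un_w T_w U$ behave compatibly with the characters being pulled back, which is where the Artin-Schreier/Fourier structure enters.

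The main obstacle I anticipate is controlling the behavior of the convolution precisely enough to obtain an isomorphism rather than merely an equality in the Grothendieck group. Because convolution involves a proper pushforward along a multiplication map, one must verify that no extra summands or shifts appear and that the perversity (up to shift) is preserved; the cleanest route is to show that the relevant multiplication morphism is an isomorphism onto the open stratum $X(w_0^{(s,s')})$ over which $\overline{K}$ is defined, and that the support conditions force the extension to the closure to agree on both sides. I would handle the four rank-two cases uniformly where possible, but the $G_2$ case (with $m_{ss'} = 6$) may require the most care since the reduced words are longest. Here I would lean on the fact, asserted in the text, that $n_w$ and $T_w$ are reduced-word-independent, which already encodes the group-theoretic heart of the braid relation at the level of the double cosets $Un_w T_w U$; transporting this to the level of sheaves via the Artin-Schreier pullback should then complete the argument. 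Ultimately I expect the proof to follow from the original analysis in \cite{KL}, with the reduction to rank two being the conceptual key that makes the verification finite and tractable.
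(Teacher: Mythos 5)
The first thing to say is that the paper does not prove this proposition: it is stated with the citation \cite{KL} and imported as an external input, and the surrounding text makes clear that Kazhdan and Laumon's geometric argument is precisely the black box that later lets the author \emph{re}prove the braid relations for Juyumaya's generators without case analysis. So there is no internal proof to compare against; the only meaningful comparison is with the argument in \cite{KL} itself.

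Measured against that, your central idea is the right one and is essentially what Kazhdan and Laumon do: show that for any $w \in W$ and any reduced word $w = s_{i_1}\cdots s_{i_k}$, the iterated convolution $\overline{K(s_{i_1})} * \cdots * \overline{K(s_{i_k})}$ is isomorphic to the intrinsically defined sheaf $\overline{K(w)}$ (the Goresky--MacPherson extension from $X(w)$ of a local system depending only on $n_w$ and $T_w$); word-independence, hence the braid relations, is then immediate. Two points in your write-up need attention, however. First, the ``reduction to rank two'' is both unnecessary and not correct as stated: $\overline{K(s)}$ lives on $\overline{X(s)} \subset G/U \times G/U$ for the full group $G$, and the convolution $\overline{K(s)} * \overline{K(s')} * \cdots$ is supported on $\overline{X(w)}$ for $w$ the longest element of $\langle s, s'\rangle$, which is not a subvariety governed by a rank-two group; since a braid relation is just the assertion that two reduced words of one element of $W$ give the same sheaf, word-independence already finishes the argument and there are no rank-two cases left to check. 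Second, and more seriously, the step you correctly flag as ``the main obstacle'' is exactly where all the mathematical content lies, and your proposal names it without resolving it: one must show that when $l(sw) = l(s) + l(w)$ the convolution $\overline{K(s)} * \overline{K(w)}$ is again irreducible perverse up to shift, with no constituents supported on the boundary of $X(sw)$, so that its agreement with $K(sw)$ over the open stratum (which does follow from the multiplication of double cosets $Un_sT_sU \cdot Un_wT_wU$) forces it to be the intermediate extension. In \cite{KL} this is achieved by identifying convolution with $\overline{K(s)}$ with a symplectic Fourier--Deligne transform along the two-dimensional fibration $G/U \to G/Q_s$, which is an equivalence preserving perversity and irreducibility. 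Without an argument of this kind, ``no extra summands or shifts appear'' is an assertion rather than a proof, and the proposal remains a plan for the proof in \cite{KL} rather than a proof.
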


In \cite{Eteve} it is observed that the sheaves $\overline{K(w)}$ can be obtained by pullback from a canonical multiplication map $m : G/U \times G/U \to U\backslash G/U$, and that this equivalence respects convolution. As a result, from now on we will consider $\overline{K(w)}$ as an element of $D^b_{c}(U \backslash G/U)$ and we will work directly with convolution of sheaves on $U \backslash G/U$ in the sense of Definition 2.3 of loc. cit. 

We also refer to Definition 2.4 of loc. cit. for the action of $D^b_c(U \backslash G /U)$ on $D^b_c(G/U)$ by convolution on the right, which we will use going forward when we consider the defining action of Kazhdan-Laumon sheaves on $D^b_c(G/U)$, which for $\overline{K(s)}$ agrees with the corresponding symplectic Fourier-Deligne transform on $D^b_c(G/U)$ as we explain in \ref{rem:sft}.

Note that for any $g \in G/U$, we obtain an endomorphism of $D^b_{c}(G/U)$ by pullback along the natural action of $g$ on $G/U$ by multiplication; it is shown in \cite{KL} that convolution by $\overline{K(w)}$ commutes with this action.

\begin{remark}
\label{rem:sft}
In \cite{KL} and \cite{P}, the authors study symplectic Fourier-Deligne transforms on the basic affine space corresponding to simple reflections $s \in S$. It is shown in loc. cit. that the symplectic Fourier-Deligne transform of sheaves on $G/U$ associated to $s \in S$ agrees with convolution by the Kazhdan-Laumon sheaf $\overline{K(s)}$, and so the ``symplectic Fourier transform algebra" generated by the induced endomorphisms of $K_0(G/U)$ under composition is the same as the subalgebra of $K_0(G/U)$ generated by Kazhdan-Laumon sheaves under convolution which we study in the present paper.
\end{remark} 

In addition to the Kazhdan-Laumon sheaves $\overline{K(s)}$ introduced above (which in the next section we will show categorify the generators $g_s$ of the generalized algebra and braids and ties which we will introduce later), it will be useful to introduce another collection of sheaves $E(s)$ which will categorify the other generators $e_s$ of this algebra.

\begin{definition}\label{def:es}
    For any $s \in S$, note that there is a natural inclusion $i_{T_s} : T_s \hookrightarrow G/U$ with $i_{T_s}(t_s) = t_sU$. Let $\underline{\mathbb{C}}_{T_s}$ be the constant sheaf on $T_s$ and define the sheaf $E(s) = i_{T_s!}\underline{\mathbb{C}}_{T_s}$.
\end{definition}

\begin{definition}
    Let $\mathrm{KL}(v)$ be the algebra generated by the images in $K_0(G/U)$ of the classes $[\overline{K(s)}]$ of Kazhdan-Laumon sheaves for all $s \in S$ under convolution. Let $\widetilde{\mathrm{KL}}(v)$ be the subalgebra of $K_0(G/U)$ generated by $\mathrm{KL}(v)$ along with the $[E(s)]$ for all $s \in S$.
    
    We use the notation $\mathsf{a}_w = [\overline{K(w)}]$ for any $w \in W$ and $\mathsf{e}_s = [E(s)]$ for any $s \in S$.
\end{definition}

Alternatively, $\mathrm{KL}(v)$ could also be defined as the subalgebra of $K_0(G/U \times G/U)$ generated by the Kazhdan-Laumon sheaves on $G/U \times G/U$ (as originally defined in \cite{KL}) under convolution.

\subsection{The algebras $\mathcal{E}(v)$ and $\mathcal{C}(v)$}

We now recall the definition of the ``generalized algebra of braids and ties." This was defined in Type $A$ by Aicardi and Juyumaya in \cite{AJ}, and subsequently generalized to all types by Marin in \cite{MarYoko}. Here, to unify notation we use $\mathcal{E}(v)$ (originally used in \cite{AJ} to refer only to the original definition in Type $A$) to denote the generalized algebra in the sense of Marin (which is called $C_W^R$ in his paper), whose definition is very similar and agrees in Type $A$ with the original algebra of braids and ties.

\begin{definition}\label{def:evcv}
    The generalized algebra of braids and ties $\mathcal{E}(v)$ is the algebra generated as follows. For each $s \in S$, we have a generator $g_s$, for each $r \in W$ a reflection, we have a generator $e_r$. We begin with the relations.
\begin{align*}
    \underbrace{g_sg_tg_s \dots}_{m_{st}} & = \underbrace{g_tg_sg_t \dots}_{m_{st}} \quad s, t \in S\\
    e_r^2 & = e_r \quad \text{for all reflections $r \in W$}\\
    e_{r_1}e_{r_2} & = e_{r_2}e_{r_1}\\
    e_{r_1}e_{r_2} & = e_{r_1}e_{r_1^{-1}r_2r_1}\quad \text{for all reflections $r_1, r_2 \in W$}\\
    g_se_r & = e_{srs}g_s \quad \text{for all reflections $r$ and all $s \in S$}\\
    g_s^2 & = 1 + (v^2 - 1)e_s(1 + g_s) \quad s \in S
\end{align*}
    Now for any $w \in W$ with reduced expression $s_{i_1} \dots s_{i_k}$, the braid relation above shows that $g_w = g_{s_{i_1}} \dots g_{s_{i_k}}$ is well-defined. Further, for any reflection subgroup $J \subset W$, set $e_J = e_{r_1} \dots e_{r_\ell}$ where $\{r_1, \dots, r_\ell\}$ is a generating set of reflections for $J$. In Section 3.4 of \cite{MarYoko}, a map $p$ from the set of reflection subgroups of $W$ to the set of closed symmetric root subsystems of $\Phi$ is defined. We now impose the additional relation
    \begin{align}
        e_J & = e_{J'}, \quad p(J) = p(J')\label{eqn:CRW}
    \end{align}
    to complete the definition of $\mathcal{E}(v)$. We note that clearly this algebra is also generated only by $\{g_s, e_s\}_{s \in S}$.\footnote{Note that the algebra generated by the first six relations is called $C_W$ in \cite{MarYoko} and our algebra $\mathcal{E}(v)$ obtained by adding in the relation (\ref{eqn:CRW}) is called $C_W^R$ in loc. cit. We also note that (\ref{eqn:CRW}) is redundant in Type $A$, and therefore doesn't occur in the original definition of the algebra of braids and ties.} We then define the algebra $\mathcal{C}(v)$ to be the ``braid image" subalgebra of $\mathcal{E}(v)$, i.e. the subalgebra generated by the $g_s$, $s \in S$.
\end{definition}

\section{Categorification theorems}

\subsection{Sheaves and functions}\label{subsec:shfun}

Since $G/U$ is defined over $\mathbb{F}_q$, any object lying in $D^b_{c}(G/U)$ comes equipped with a natural Frobenius endomorphism. For any power $q^k$ of $q$ for $k \geq 1$, taking the trace of the $k$th power of the Frobenius morphism yields a well-defined map
\begin{align*}
    \mathrm{tr}_{q^k} : K_0(G/U) \to \mathbb{C}[\gufin].
\end{align*}

These assemble to a map
\begin{align*}
    \mathrm{tr} : K_0(G/U) \to \prod_{k \geq 1} \mathbb{C}[\gufin].
\end{align*}

\begin{proposition}[\cite{Lau}]\label{prop:lau}
    $\mathrm{tr}$ is injective.
\end{proposition}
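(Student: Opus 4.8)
The statement is an instance of Grothendieck's sheaf-to-function dictionary, and I would prove it by a dévissage along a stratification that reduces everything to a single local input: the Chebotarev density theorem. Concretely, the plan is to show that a class $\xi \in K_0(G/U)$ with $\mathrm{tr}(\xi) = 0$ — that is, $\mathrm{tr}_{q^k}(\xi) = 0$ as a function on $\gufin$ for every $k \geq 1$ — must vanish. The only genuinely geometric content enters at the level of local systems on a single smooth stratum; everything preceding it is a formal manipulation of localization sequences in $K_0$.

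First I would fix a finite stratification $G/U = \bigsqcup_\alpha S_\alpha$ by smooth, geometrically connected, locally closed subvarieties, fine enough that a chosen representative of $\xi$ has lisse cohomology sheaves along each $S_\alpha$. Iterating the localization triangles attached to the open--closed decompositions of this stratification yields a direct sum decomposition
\[
K_0(G/U) \;\cong\; \bigoplus_\alpha K_0^{\mathrm{lisse}}(S_\alpha),
\]
where $K_0^{\mathrm{lisse}}(S_\alpha)$ is the Grothendieck group of lisse $\ell$-adic sheaves (virtual local systems) on $S_\alpha$, and the projection onto the $\alpha$-summand is induced by the restriction $j_\alpha^* : D^b_c(G/U) \to D^b_c(S_\alpha)$ along $j_\alpha : S_\alpha \hookrightarrow G/U$. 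Indeed, in the two-stratum case $X = U \sqcup Z$ one has the localization triangle $j_! j^* \xi \to \xi \to i_* i^* \xi$, hence $\xi = i_* i^*\xi + j_! j^*\xi$ in $K_0$; since $i^* i_* \cong \mathrm{id}$ and $i^* j_! = 0$, this is exactly the splitting of $0 \to K_0(Z) \xrightarrow{i_*} K_0(X) \xrightarrow{j^*} K_0(U) \to 0$ sending $\xi$ to $(i^*\xi, j^*\xi)$, and iterating identifies each stratum's component with the restriction $j_\alpha^*\xi$. Because the trace of Frobenius at a point depends only on the stalk, the trace map is compatible with restriction, $\mathrm{tr}_{q^k}(j_\alpha^*\xi) = \mathrm{tr}_{q^k}(\xi)|_{S_\alpha(\mathbb{F}_{q^k})}$, so $\mathrm{tr}(\xi) = 0$ forces $\mathrm{tr}(j_\alpha^*\xi) = 0$ for each $\alpha$. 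It therefore suffices to treat the case in which $G/U$ is replaced by a single smooth connected variety $S = S_\alpha$ and $\xi$ by a virtual local system.

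On such an $S$, a class in $K_0^{\mathrm{lisse}}(S)$ is the same datum as a virtual continuous $\ell$-adic representation $\rho$ of the arithmetic fundamental group $\pi_1(S)$, and $\mathrm{tr}_{q^k}(\xi)(x)$ is precisely the value of the virtual character $\chi_\rho$ at the geometric Frobenius conjugacy class $F_x \in \pi_1(S)$ attached to a point $x \in S(\mathbb{F}_{q^k})$. The hypothesis says $\chi_\rho$ vanishes at every $F_x$, for all $x$ and all $k$. By the Chebotarev density theorem for arithmetic fundamental groups, the union of these Frobenius classes over all $k$ is dense in $\pi_1(S)$ for the $\ell$-adic topology, and $\chi_\rho$ is continuous; hence $\chi_\rho \equiv 0$. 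Since the coefficient field has characteristic zero, a virtual representation with vanishing character is zero in the representation ring — equivalently, the character determines the semisimplification and $[\rho] = [\rho^{\mathrm{ss}}]$ in $K_0$ — so $j_\alpha^*\xi = 0$ in $K_0^{\mathrm{lisse}}(S)$. Combined with the decomposition above, this gives $\xi = 0$.

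I expect the Chebotarev step to be the \emph{main obstacle}: it is exactly where the infinitude of the test fields $\mathbb{F}_{q^k}$ (the product over all $k \geq 1$) is essential, and where the arithmetic input behind injectivity genuinely resides. Two bookkeeping points remain routine but are worth flagging. The half-Tate-twist is handled automatically, because the honest Frobenius traces $\mathrm{tr}(F_x \mid \rho)$ record the full eigenvalues including their weight contributions; concretely $v$ acts on $\mathrm{tr}_{q^k}$ as the scalar $q^{-k/2}$, so the twist is visible in the character and is separated by the density argument. Finally, to upgrade the integral statement (which is the content of Laumon's proposition) to the asserted $\mathbb{C}(v)$-linear injectivity, one clears denominators and uses that the infinitely many specializations $v \mapsto q^{-k/2}$ cannot simultaneously annihilate a nonzero rational function of $v$, so vanishing of all $\mathrm{tr}_{q^k}$ already forces vanishing over $\mathbb{C}(v)$.
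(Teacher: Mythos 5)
Your argument is correct and is essentially the standard proof of this statement: the paper itself offers no proof, citing the result directly to Laumon, and the argument in that reference is precisely this d\'evissage (localization triangles reducing to virtual lisse sheaves on the strata of a stratification adapted to $\xi$, followed by Chebotarev density and the fact that in characteristic zero a virtual representation is determined by its character). The only wording to adjust is that the Frobenius conjugacy classes are dense in $\pi_1(S_\alpha)$ for its \emph{profinite} topology, which suffices because the representations in question are continuous for that topology; and the final remark about clearing denominators is needed because the paper silently works with $K_0(G/U)\otimes_{\mathbb{Z}[v,v^{-1}]}\mathbb{C}(v)$, so it is good that you flagged it.
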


\begin{definition}
For any $k \geq 1$, let $\mathrm{KL}_{q,k}$, an algebra which is a subspace of $\mathbb{C}[\gufin]$, be the image of $\mathrm{KL}(v) \subset K_0(G/U)$ under the morphism $\mathrm{tr}_{q^k}$. Similarly, let $\widetilde{\mathrm{KL}}_{q,k}$ be the image of $\widetilde{\mathrm{KL}}(v)$. We let $\overline{\mathsf{a}_s}$ and $\overline{\mathsf{e}_s}$ denote the images in $\mathrm{KL}_{q,k}$ of the corresponding generators.
\end{definition}

\subsection{Symplectic Fourier transforms on functions}

As mentioned in Remark \ref{rem:sft}, it is shown in \cite{KL} and \cite{P} that convolution with $\overline{K(s)}$ agrees with the symplectic Fourier-Deligne transform defined in loc. cit. In particular, one can then deduce the following description.

Given a simple reflection $s \in S$, let $P_s$ be the corresponding parabolic subgroup of $G$, and let $Q_s = [P_s, P_s]$. As explained in detail in Section 1 of \cite{KForms}, there is a natural $(\mathbb{A}^2_k \setminus \{(0, 0)\})$-fibration $G/U \to G/Q_s$ which can be completed to form a $2$-dimensional vector bundle equipped with a $G$-invariant symplectic form $\langle, \rangle$. With this in mind, the equivalence between $- * \overline{K(s)}$ and the symplectic Fourier-Deligne transform gives that for any $k \geq 1$ and any $f \in \mathbb{C}[\gufin]$, $x \in \gufin$,
\begin{align}\label{eqn:convfun} \left(\mathrm{tr}_{q^k}\left(f\right) * \mathrm{tr}_{q^k}\left(\overline{K(s)}\right) \right)(x) & = \frac{1}{q^k} \sum_{y \in xQ_s(\mathbb{F}_{q^k})} \psi(\langle x, y\rangle)f(y).
\end{align}

Further, it is a straightforward computation using Grothendieck's sheaf-function dictionary that for any $x \in \gufin$,
\begin{align}\label{eqn:convfun2} 
\left(\mathrm{tr}_{q^k}\left(f\right) * \mathrm{tr}_{q^k}\left(E(s)\right)\right)(x) & = \sum_{x^{-1}y \in T_s(\mathbb{F}_{q^k}) \subset X(\mathbb{F}_{q^k})} f(y).
\end{align}
We include the above formulas to note that the main computations in this paper (Propositions \ref{prop:yoko} and \ref{prop:monodrom}) can be (and indeed were, in a previous version of this paper) done explicitly on the level of functions using the formulas (\ref{eqn:convfun}) and (\ref{eqn:convfun2}) and then appealing to Proposition \ref{prop:lau}, but we will instead prove them on the level of sheaves using the six-functor formalism.

\subsection{Connection to the Yokonuma-Hecke algebra}

We will sometimes write elements of $\mathbb{C}[\gufin]$ as functions on $\gufin$, and other times as linear combinations of elements in $\gufin$. For any $g \in G(\mathbb{F}_{q^k})$, let $\delta_{g}$ be the indicator function on $\overline{g} \in \gufin$.

\begin{definition}
    Let $\mathrm{Y}_{q,k}$ (the Yokonuma-Hecke algebra) be defined as the algebra $\mathrm{End}_{G(\mathbb{F}_q)}(\mathbb{C}[\gufin])$ under composition.
\end{definition}

As explained in \cite{JuyKan}, the Bruhat decomposition $G = \coprod_{n \in N_{G(\mathbb{F}_{q^k})}(T(\mathbb{F}_{q^k}))} UnU$ gives that the standard basis of $\mathrm{Y}_{q, k}$ is parametrized by $n \in N_{G(\mathbb{F}_{q^k})}(T(\mathbb{F}_{q^k}))$. For any $s \in S$, we can define $R_s \in \mathrm{Y}_{q, k}$ (the standard basis element corresponding to $s$) as in loc. cit., and for any $t \in T(\mathbb{F}_{q^k})$ we can define $R_t$ similarly. In this way, we can identify $\mathrm{Y}_{q,k} \cong \mathbb{C}[U\backslash G/U(\mathbb{F}_{q^k})]$, where the endomorphisms $R_s, R_t \in \mathrm{Y}_{q,k}$ become identified with the indicator functions at $s$ and $t$ respectively, and composition of endomorphisms is identified with convolution.

Following \cite{JuyKan}, for any $r \in \mathbb{F}_{q^k}^\times$ we define
\begin{align*}
    h_s(r) & = \rho_s \begin{pmatrix}
r & 0\\
0 & r^{-1}
    \end{pmatrix},
\end{align*}
where $\rho_s : SL_2(\mathbb{F}_{q^k}) \to G(\mathbb{F}_{q^k})$ is the morphism corresponding to the simple root $\alpha_s$. When $t = h_s(r)$ for some $s \in S$, $r \in \mathbb{F}_{q^k}^\times$, we write $H_s(r) = R_t$. We then define for each $s \in S$ the element
\begin{align*}
    E_s & = \sum_{r \in \mathbb{F}_{q^k}^\times} H_s(r).
\end{align*}

Now we recall the original presentation of the Yokonuma-Hecke algebra, given by Yokonuma in \cite{Yoko}.
\begin{proposition}
    The Yokonuma-Hecke algebra $\mathrm{Y}_{q,k}$ is generated as an algebra by the $R_s$ ($s \in S$); and the $R_t$ ($t \in T(\mathbb{F}_{q^k})$) moreover, these generators along with the relations

\begin{align*}
    R_s^2 & = qH_s(-1) + R_sE_s\\
     \underbrace{R_sR_{s'}R_sR_{s'}\dots}_{m_{ss'}} & = \underbrace{R_{s'}R_sR_{s'}R_s \dots}_{m_{ss'}}, \quad s, s' \in S\\
     R_tR_s & = R_sR_{t'}, \quad t' = n_stn_s^{-1}, t \in T(\mathbb{F}_{q^k})\\
     R_{t_1}R_{t_2} & = R_{t_1t_2}, \quad t_1, t_2\in T(\mathbb{F}_{q^k})
\end{align*}
    define a presentation of $\mathrm{Y}_{q,k}$.
\end{proposition}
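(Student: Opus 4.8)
The plan is to realize $\mathrm{Y}_{q,k}=\mathrm{End}_{G(\mathbb{F}_{q^k})}(\mathbb{C}[\gufin])$ as the convolution algebra of $U(\mathbb{F}_{q^k})$-bi-invariant functions on $G(\mathbb{F}_{q^k})$, verify each listed relation by an explicit structure-constant computation, and then establish completeness by a dimension count. Writing $G_k=G(\mathbb{F}_{q^k})$, with $U_k,T_k$ the groups of $\mathbb{F}_{q^k}$-points and $N_k=N_{G_k}(T_k)$, the identification $\mathrm{End}_{G_k}(\mathbb{C}[G_k/U_k])\cong\mathbb{C}[U_k\backslash G_k/U_k]$ together with the Bruhat decomposition $G_k=\coprod_{n\in N_k}U_k n U_k$ gives the standard basis $\{R_n\}_{n\in N_k}$, where $R_n$ is the endomorphism attached to the $G_k$-orbit of $(eU_k,nU_k)$. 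In particular $\dim_{\mathbb{C}}\mathrm{Y}_{q,k}=|N_k|=|T_k|\cdot|W|$, a number I will match at the end.

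First I would record the two ``easy'' families of relations. Because $T_k$ normalizes $U_k$, one has $U_k t U_k=tU_k$ for $t\in T_k$, so $t\mapsto R_t$ is a ring map from $\mathbb{C}[T_k]$; this gives $R_{t_1}R_{t_2}=R_{t_1t_2}$. Similarly $n_s t=(n_s t n_s^{-1})n_s=t'n_s$ in $N_k$, and translating this identity of double cosets yields $R_tR_s=R_sR_{t'}$ with $t'=n_stn_s^{-1}$. For generation I use that products along length-additive words are deterministic: whenever $l(sw)=l(w)+1$ one has $U_k n_s U_k\cdot U_k n_w U_k=U_k n_{sw}U_k$ with convolution coefficient $1$, so $R_sR_{n_w}=R_{n_{sw}}$. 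Since $N_k$ is generated by $T_k$ and the $n_s$, iterating this shows $\{R_s\}_{s\in S}\cup\{R_t\}_{t\in T_k}$ generate $\mathrm{Y}_{q,k}$ and that $R_{n_w}$ is the image of any reduced word. The braid relations then follow at once: both $\underbrace{R_sR_{s'}\cdots}_{m_{ss'}}$ and $\underbrace{R_{s'}R_s\cdots}_{m_{ss'}}$ are length-additive products computing $R_{n_w}$ for the longest element $w$ of the dihedral group $\langle s,s'\rangle$, and $n_w$ is independent of the reduced word (as recorded in the preliminaries), so the two sides agree.

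The substantive computation, and the step I expect to be the main obstacle, is the quadratic relation $R_s^2=qH_s(-1)+R_sE_s$, where the product is no longer length-additive and picks up lower terms supported on the torus. I would reduce to rank one by passing to the image of $\rho_s:SL_2(\mathbb{F}_{q^k})\to G_k$ and computing $\mathbf{1}_{U_k n_s U_k}*\mathbf{1}_{U_k n_s U_k}$ directly, counting $\mathbb{F}_{q^k}$-points of the relevant intersection of Bruhat cells. Using $n_s^2=\rho_s(-I)=h_s(-1)$, the ``straight-through'' contribution is the double coset of $h_s(-1)$, whose coefficient is governed by the number of $\mathbb{F}_{q^k}$-points of $U_s$ and yields the $qH_s(-1)$ term in the normalization of the proposition; the remaining $U_k$-cosets occurring in $U_k n_s U_k\cdot U_k n_s U_k$ are exactly those of the form $n_s h_s(r)$ with $r\in\mathbb{F}_{q^k}^\times$, each with coefficient $1$. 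Since $H_s(r)=R_{h_s(r)}$ and $R_sR_{h_s(r)}=R_{n_sh_s(r)}$, these assemble precisely into $R_s\sum_r H_s(r)=R_sE_s$. Correctly bookkeeping the torus twists $h_s(r)$ (rather than a single lower coset) is where $\mathrm{Y}_{q,k}$ departs from the ordinary Iwahori--Hecke algebra, and is the delicate part.

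Finally I would verify completeness by matching dimensions. Using $R_{t_1}R_{t_2}=R_{t_1t_2}$ to collect torus generators, $R_tR_s=R_sR_{t'}$ to move them to one side, the braid relations to rearrange reflection letters, and the quadratic relation to rewrite any occurrence of $R_s^2$ in terms of torus elements and length-$\le 1$ twisted terms, every monomial in the generators reduces to a $\mathbb{C}$-linear combination of $\{R_tR_{n_w}=R_{tn_w}\}_{t\in T_k,\,w\in W}$. Hence the abstract algebra $A$ defined by the listed presentation satisfies $\dim A\le |T_k|\cdot|W|=|N_k|$. Since all the relations hold in $\mathrm{Y}_{q,k}$ and the generators generate, there is a surjection $A\twoheadrightarrow\mathrm{Y}_{q,k}$; as $\dim\mathrm{Y}_{q,k}=|N_k|$ and the spanning set maps onto the standard basis $\{R_n\}$, this surjection is an isomorphism, establishing that the given generators and relations present $\mathrm{Y}_{q,k}$.
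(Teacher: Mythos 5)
Your argument is essentially correct, but there is nothing in the paper to compare it against: this proposition is recalled verbatim as Yokonuma's classical presentation, cited to \cite{Yoko}, and the paper supplies no proof. What you have written is a sound, self-contained reconstruction of the standard argument: identify $\mathrm{Y}_{q,k}$ with the convolution algebra $\mathbb{C}[U_k\backslash G_k/U_k]$ via the refined Bruhat decomposition $G_k=\coprod_{n\in N_k}U_k n U_k$; verify the torus relations and length-additive products (coefficient $1$) directly; deduce the braid relations from the reduced-word independence of the Tits section $w\mapsto n_w$ recorded in the paper's preliminaries; obtain the quadratic relation from the rank-one computation $n_s u(x) n_s\in U n_s h_s(-x)U$ for $x\neq 0$ (which produces exactly the $R_sE_s$ term) versus $n_s^2=h_s(-1)$ for $x=0$; and close with the dimension count $\dim A\le |T_k|\cdot|W|=|N_k|=\dim\mathrm{Y}_{q,k}$. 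Two small points. First, with all groups taken over $\mathbb{F}_{q^k}$ the coefficient of $H_s(-1)$ in $R_s^2$ is $q^k=|U_s(\mathbb{F}_{q^k})|$ rather than $q$; the paper's subsequent formulas $L_s=q^{-k}(E_s+R_s\Psi_s)$ and $L_s^2=1-q^{-k}(E_s-L_sE_s)$ show that $q^k$ is the intended normalization, so the displayed ``$q$'' is a typo that you have silently and correctly repaired. Second, your derivation of $R_tR_s=R_sR_{t'}$ from the identity $n_st=t'n_s$ literally yields $R_sR_t=R_{t'}R_s$, i.e.\ conjugation by $n_s^{-1}$ rather than $n_s$; this is harmless because $n_s^2=h_s(-1)$ lies in the abelian group $T_k$, so conjugation by $n_s$ and by $n_s^{-1}$ agree on $T_k$, but it is worth stating. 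Neither point is a gap.
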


In \cite{Juyu} and \cite{JuyKan}, the authors introduced an alternative set of generators and relations; this new set of generators will be the ones directly categorified by Kazhdan-Laumon sheaves.

For any $s \in S$, let
\begin{align*}
    \Psi_s = \sum_{r \in (\mathbb{F}_{q^k})^\times} \psi(r)H_{s}(r),
\end{align*}
and let $L_s = q^{-k}(E_s + R_s\Psi_s)$.

\begin{proposition}[\cite{JuyKan}]\label{prop:yokojuyu}
    The algebra $\mathrm{Y}_{q,k}$ is generated by $L_s$, $s \in S$, and $R_t$, $t \in T(\mathbb{F}_{q^k})$, and a presentation of this algebra is given by imposing the relations
    \begin{align*}
        L_s^2 & = 1 - q^{-k}(E_s - L_sE_s)\\
        \underbrace{L_sL_tL_sL_t\dots}_{m_{st}} & = \underbrace{L_tL_sL_tL_s \dots}_{m_{st}}\\
        R_tL_s & = L_sR_{t'}, t' = sts^{-1} \quad (t \in T(\mathbb{F}_{q^k}))\\
        R_{t_1}R_{t_2} & = R_{t_1t_2} \quad (t_1, t_2 \in T(\mathbb{F}_{q^k})).
    \end{align*}
    
\end{proposition}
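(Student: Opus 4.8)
The plan is to prove the proposition by a change of generators from Yokonuma's presentation (the preceding Proposition), which already identifies $\mathrm{Y}_{q,k}$ as the algebra on $\{R_s\}_{s\in S}\cup\{R_t\}_{t\in T(\mathbb{F}_{q^k})}$ with its four relations. Three things must then be checked: that $\{L_s\}\cup\{R_t\}$ still generate; that they satisfy the four relations in the statement; and that these relations are \emph{complete}, i.e. suffice to present the algebra. For the completeness step I will use that $\dim_{\mathbb{C}}\mathrm{Y}_{q,k}=|N_{G(\mathbb{F}_{q^k})}(T(\mathbb{F}_{q^k}))|=|W|\cdot|T(\mathbb{F}_{q^k})|$, the cardinality of the standard basis. \textbf{Generation.} The relation $R_{t_1}R_{t_2}=R_{t_1t_2}$ identifies the subalgebra generated by the $R_t$ with the commutative group algebra $\mathbb{C}[T(\mathbb{F}_{q^k})]\cong\prod_\theta\mathbb{C}$, and both $E_s=\sum_r H_s(r)$ and $\Psi_s=\sum_r\psi(r)H_s(r)$ lie inside it. The key observation is that $\Psi_s$ is \emph{invertible} there: under a character $\theta$ it acts by $\sum_{r\in\mathbb{F}_{q^k}^\times}\psi(r)(\theta\circ h_s)(r)$, which equals $-1$ when $\theta\circ h_s$ is trivial and is a nonzero Gauss sum (of absolute value $q^{k/2}$) otherwise. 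Hence from $q^kL_s=E_s+R_s\Psi_s$ we may solve $R_s=(q^kL_s-E_s)\Psi_s^{-1}$, recovering every Yokonuma generator, which proves generation.

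\textbf{Verification of the relations.} Among the four, $R_{t_1}R_{t_2}=R_{t_1t_2}$ is already a Yokonuma relation, and $R_tL_s=L_sR_{sts^{-1}}$ follows by pushing $R_t$ through $L_s=q^{-k}(E_s+R_s\Psi_s)$ using $R_tR_s=R_sR_{n_s^{-1}tn_s}$, the fact that $n_s$ induces $s$ on $T$, and that $R_t$ commutes with the torus elements $E_s,\Psi_s$. The quadratic relation $L_s^2=1-q^{-k}(E_s-L_sE_s)$ is obtained by expanding $L_s^2=q^{-2k}(E_s+R_s\Psi_s)^2$ and simplifying with elementary identities such as $E_s^2=(q^k-1)E_s$, the Yokonuma relation $R_s^2=qH_s(-1)+R_sE_s$, the conjugation rule moving $\Psi_s$ past $R_s$ (which replaces $\Psi_s$ by its conjugate via $r\mapsto r^{-1}$), and the resulting Gauss-sum evaluation; this is routine but tedious bookkeeping inside $\mathbb{C}[T(\mathbb{F}_{q^k})]$. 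The last relation is the braid relation for the $L_s$, discussed next.

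\textbf{Completeness and the main obstacle.} Let $A$ be the abstract algebra on $\{L_s\}\cup\{R_t\}$ with the four stated relations; the previous two steps yield a surjection $A\twoheadrightarrow\mathrm{Y}_{q,k}$. Using the braid relation one defines $L_w=L_{s_{i_1}}\cdots L_{s_{i_k}}$ independently of the reduced word; then the quadratic relation reduces any repeated $L_s$, and the two commutation relations move every $R_t$ to the right and merge them into a single factor, so that $\{L_wR_t:w\in W,\ t\in T(\mathbb{F}_{q^k})\}$ spans $A$ and $\dim A\le|W|\cdot|T(\mathbb{F}_{q^k})|=\dim\mathrm{Y}_{q,k}$. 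Since $L_s=q^{-k}(E_s+R_s\Psi_s)$ has a leading term of length one in the standard basis, the images $L_wR_t$ are triangular with respect to the length filtration, hence linearly independent in $\mathrm{Y}_{q,k}$; counting then forces $A\xrightarrow{\ \sim\ }\mathrm{Y}_{q,k}$. I expect the braid relation $\underbrace{L_sL_tL_s\cdots}_{m_{st}}=\underbrace{L_tL_sL_t\cdots}_{m_{st}}$ to be the genuine obstacle: it is needed both to make $L_w$ well-defined above and as relation two of the statement, and it is precisely the identity proved case-by-case in \cite{Juyu} (simply-laced types) and \cite{JuyKan} (the remaining types). Everything else reduces to elementary Gauss-sum manipulations, and the distinctive payoff recorded later in this paper is a uniform, type-independent derivation of exactly this braid relation from the braid relations among the Kazhdan-Laumon sheaves $\overline{K(s)}$, bypassing the type-by-type computation.
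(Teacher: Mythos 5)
The paper does not actually prove this proposition; it is quoted from \cite{JuyKan} and used as an input to Proposition \ref{prop:yoko}, so there is no internal proof to compare your attempt against. Your outline is nonetheless the standard strategy and matches what is done in loc.\ cit.: change generators from Yokonuma's presentation, observe that $\Psi_s$ is invertible in $\mathbb{C}[T(\mathbb{F}_{q^k})]$ because each of its character components is either $-1$ or a nonzero Gauss sum (so $R_s=(q^kL_s-E_s)\Psi_s^{-1}$ recovers the old generators and generation holds), verify the four relations, and close the presentation by a dimension count against $|W|\cdot|T(\mathbb{F}_{q^k})|$. Two points of care: in the commutation step you need not only that $R_t$ commutes with $E_s$ but that $E_sR_t=E_sR_{s(t)}$, which uses $t^{-1}s(t)\in T_s$ so that $t$ and $s(t)$ lie in the same $T_s$-coset; and the triangularity claim for general $L_wR_t$ is delicate (the top term of $L_w$ is a function supported on $n_wT_w$, not a single standard basis element). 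The latter is also unnecessary: surjectivity of $A\twoheadrightarrow\mathrm{Y}_{q,k}$ together with a spanning set of $A$ of size $|W|\cdot|T(\mathbb{F}_{q^k})|=\dim\mathrm{Y}_{q,k}$ already forces the isomorphism.

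The one genuine gap, which you correctly flag, is the braid relation for the $L_s$: as a standalone proof of the proposition your argument is circular at that step, since you appeal to \cite{Juyu} and \cite{JuyKan}, the very sources to which the statement is attributed. Within the logic of this paper that step is supplied independently and type-uniformly: Proposition \ref{prop:maincomputation} identifies $L_s$ with convolution by $\overline{K(s)}$ on $\mathbb{C}[\gufin]$, and the braid relations for the sheaves $\overline{K(s)}$ (the cited geometric result of Kazhdan--Laumon) then yield the braid relations for the $L_s$; this is exactly the Corollary stated in the introduction. So your diagnosis of where the real content lies, and of what the geometric input buys, is accurate, but to make the proof self-contained you must either reproduce the case-by-case computation or invoke the geometric argument of this paper rather than the cited references.
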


The action by convolution gives rise to a natural morphism $\psi_{q,k} : \widetilde{\mathrm{KL}}_{q,k} \to \mathrm{End}(\mathbb{C}[\gufin])$. Convolution of sheaves descends under the map $\mathrm{tr}_{q^k}$ to a convolution of functions in $\mathbb{C}[\gufin]$.

\begin{proposition}\label{prop:yoko}
    The image of $\psi_{q,k}$ lies in $\mathrm{Y}_{q,k}$. We have
    \begin{align*}
        \psi_{q,k}(\mathsf{a}_s) & = L_s\\
        \psi_{q,k}(\mathsf{e}_s) & = E_s.
    \end{align*}
\end{proposition}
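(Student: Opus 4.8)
The plan is to verify the two stated identities by computing the convolution action of $\mathsf{a}_s$ and $\mathsf{e}_s$ on functions in $\mathbb{C}[\gufin]$ and matching these against the explicit formulas defining $L_s$ and $E_s$ in $\mathrm{Y}_{q,k}$. The key input is the pair of explicit convolution formulas (\ref{eqn:convfun}) and its analogue for $E(s)$ established in the previous subsection. First I would verify the easier of the two identities, namely $\psi_{q,k}(\mathsf{e}_s) = E_s$: by the explicit formula for convolution with $E(s)$, applying $\mathsf{e}_s$ to a function $f$ sends it to $x \mapsto \sum_{x^{-1}y \in T_s(\mathbb{F}_{q^k})} f(y)$, which is precisely the sum over the translates $y = xt$ for $t \in T_s(\mathbb{F}_{q^k})$. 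I would then identify $T_s(\mathbb{F}_{q^k})$ with the elements $h_s(r)$, $r \in (\mathbb{F}_{q^k})^\times$, using the definition of $T_s$ as the image of the coroot $\alpha_s^\vee$ together with the defining formula for $h_s(r)$, so that this operator agrees on the nose with $E_s = \sum_{r} H_s(r)$ acting by right translation.

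The main work is the identity $\psi_{q,k}(\mathsf{a}_s) = L_s$. Here I would start from formula (\ref{eqn:convfun}), which gives convolution with $\overline{K(s)}$ as $(\mathsf{a}_s \cdot f)(x) = q^{-k}\sum_{y \in xQ_s(\mathbb{F}_{q^k})} \psi(\langle x, y\rangle) f(y)$, the symplectic Fourier transform along the rank-two symplectic bundle attached to $s$. The strategy is to decompose the fiber $xQ_s(\mathbb{F}_{q^k})$ according to the Bruhat cell structure inside the corresponding $SL_2$: each coset splits into the piece where the relevant $SL_2$-coordinate vanishes (contributing the ``torus'' part that produces the $E_s$ summand) and the open piece where it is nonzero (which, after summing the Artin–Schreier character $\psi$ over the additive variable, produces the Gauss-sum-type contribution $R_s\Psi_s$). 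Concretely, I would reduce the computation to a rank-one calculation inside $\rho_s(SL_2)$, where the symplectic Fourier transform of the delta-type basis functions can be computed directly, and then reassemble the answer as $q^{-k}(E_s + R_s\Psi_s)$, which is exactly the definition of $L_s$.

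The hard part will be bookkeeping the geometry of the symplectic form and the coset decomposition carefully enough to see the two terms emerge with the correct normalizations. In particular, identifying which part of the fiber sum over $xQ_s(\mathbb{F}_{q^k})$ corresponds to $R_s\Psi_s$ requires tracking how the Bruhat decomposition $G = \coprod UnU$ interacts with the symplectic pairing $\langle x, y\rangle$, and checking that the character values $\psi(\langle x, y\rangle)$ along the open stratum reorganize precisely into the coefficients $\psi(r)$ appearing in $\Psi_s = \sum_r \psi(r) H_s(r)$. I expect the cleanest route is to work entirely inside the $SL_2$ attached to $s$ via $\rho_s$, where both sides become explicit $2\times 2$ matrix computations, and then invoke $G(\mathbb{F}_q)$-equivariance (the commutation of convolution by $\overline{K(w)}$ with the left $G$-action noted above) to reduce to a single representative coset. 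Once both generator identities hold, the containment of $\mathrm{image}(\psi_{q,k})$ in $\mathrm{Y}_{q,k}$ follows immediately, since $\widetilde{\mathrm{KL}}_{q,k}$ is generated by the $\mathsf{a}_s$ and $\mathsf{e}_s$ and these map to elements of $\mathrm{Y}_{q,k}$.
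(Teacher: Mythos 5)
Your proposal is sound and would prove the proposition, but it is organized differently from the paper's argument. The paper reduces everything to the functions $\varepsilon_{\theta}$ indexed by characters $\theta$ of $T(\mathbb{F}_{q^k})$: it shows (Lemma \ref{lem:epsilon}) that $\delta_1$ lies in their span, invokes $G$-equivariance and linearity to reduce to these, and then splits into the two cases $s \in W_{\theta}^{\circ}$ and $s \notin W_{\theta}^{\circ}$, evaluating both $\overline{\mathsf{a}}_s \cdot \varepsilon_{\theta}$ and $L_s \cdot \varepsilon_{\theta}$ pointwise on $T(\mathbb{F}_{q^k})$ and on $B(\mathbb{F}_{q^k})s$ and matching the resulting torus character sums and Gauss sums (with Lemma \ref{lem:proj} converting $E_s$ into $(q^k-1)\pi_s$). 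You instead keep $f$ a delta function and decompose the fiber $xQ_s(\mathbb{F}_{q^k})$ of the $(\mathbb{A}^2\setminus\{0\})$-bundle by the Bruhat stratification of the rank-one group: the closed stratum (where $\langle x,y\rangle = 0$, i.e. $y = xt$ with $t \in T_s$) yields $q^{-k}E_s$, and the open stratum, after summing the Artin--Schreier character over the symplectic coordinate, reassembles into $q^{-k}R_s\Psi_s$, recovering $L_s$ term-by-term from its defining expression. Your route has the advantage of avoiding the case split on $W_{\theta}^{\circ}$ and making the provenance of the two summands $E_s$ and $R_s\Psi_s$ geometrically transparent (it also handles $\psi_{q,k}(\mathsf{e}_s) = E_s$ directly, where the paper detours through $\pi_s$); the cost is exactly the bookkeeping you flag --- you must track the sign in $\overline{K(s)} = (-\overline{\mathrm{pr}}_s)^{*}\mathcal{L}_{\psi}$, the identification of the symplectic area with the parameter $r$ in $H_s(r)$, and the operator order in $R_s\Psi_s$ --- whereas the paper's character-theoretic reduction trades that geometry for two mechanical sum evaluations. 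Either way the final deduction (that the image of $\psi_{q,k}$ lands in $\mathrm{Y}_{q,k}$ because the generators do, and because $L_s$ and $E_s$ commute with the left $G(\mathbb{F}_{q^k})$-action) is immediate, as you note.
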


\begin{proof}
    We first note that under the identification $\mathrm{Y}_{q,k} \cong \mathbb{C}[U \backslash G/U(\mathbb{F}_{q^k})]$, we can think of the map $\psi_{q,k}$ as having target $\mathbb{C}[U\backslash G/U(\mathbb{F}_{q^k})]$. In doing so, $\psi_{q,k}$ can be thought of as the natural inclusion $\widetilde{\mathrm{KL}}_{q,k}\hookrightarrow \mathbb{C}[U \backslash G/U(\mathbb{F}_{q^k})]$. Accordingly, it is enough to show that $\mathrm{tr}_{q^k}(\overline{K(s)}) = L_s$ and $\mathrm{tr}_{q^k}(E(s)) = E_s$ as elements of $\mathbb{C}[U \backslash G /U(\mathbb{F}_{q^k})]$.

    By the definition of $E(s) = i_{T_s,!}\mathbb{C}$, this is clear for $E(s)$. It remains to show that $\mathrm{tr}_{q^k}(\overline{K(s)}) = L_s$ for any $s \in S$. Since in this computation we only consider a single simple reflection, it is enough to check this in the case where $G = \mathrm{SL}_2$. In this case, let $B$ be the Borel subgroup of upper-triangular matrices with unipotent radical $U$, and let $p_s : U \backslash \mathrm{SL}_2/U \to \mathbb{G}_a$ be the map
    \begin{align}
        \begin{pmatrix}
            a & b\\
            c & d
        \end{pmatrix} \mapsto -c.
    \end{align}
    It follows from the description of $\overline{K(s)}$ in Section 5.2 of \cite{P} that $\overline{K(s)} = p_s^*\mathcal{L}_\psi[2](1)$. Now note that any element of $N_{G(\mathbb{F}_{q^k})}(T(\mathbb{F}_{q^k}))$ can be written as either $t$ or $st$ for some $t \in T(\mathbb{F}_{q^k})$. For any such element, we have that
    \begin{align}
        L_s(t) & = q^{-k}E_s(t) + q^{-k}R_s\Psi_s(t)\\
        & = q^{-k} + 0\\
        L_s(st) & = q^{-k}E_s(st) + q^{-k}R_s\Psi_s(st)\\
        & = 0 + q^{-k}\psi(t).
    \end{align}
    Similarly, by our description of $\overline{K(s)}$ and  the six-functor formalism for functions, we have that, as elements of $\mathbb{C}[U\backslash G/U(\mathbb{F}_{q^k})]$, $\mathrm{tr}_{q^k}(\overline{K(s)}) = q^{-k}\mathrm{tr}_{q^k}(\mathcal{L}_{\psi})\circ p_s$. So, when identifying the target of $\psi_{q,k}$ with $\mathbb{C}[U\backslash G/U(\mathbb{F}_{q^k})]$, for any $t \in T(\mathbb{F}_{q^k})$ we have
    \begin{align}
        \psi_{q,k}(\mathsf{a}_s)(t) & = q^{-k}\psi(0)= q^{-k},\\
        \psi_{q,k}(\mathsf{a}_s)(st) & = q^{-k}\psi(t),
    \end{align}
    which guarantees that $\psi_{q,k}(\mathsf{a}_s) = L_s$. 
\end{proof}

\subsection{Algebras with a generic parameter}\label{subsec:genparam}

We now use Proposition \ref{prop:yoko} to prove our main result.
\begin{proof}[Proof of Theorem \ref{thm:mainthm}]
    Consider the diagram
    $$\begin{tikzcd}
     \widetilde{\mathrm{KL}}(v) \arrow[dr, "\mathrm{tr}"'] & & \mathcal{E}(v) \arrow[dl, "i"]\\
      & \prod_{k \geq 1} \mathbb{C}[\gufin] &,
    \end{tikzcd}$$
    where $i$ is assembled from the composition of the natural maps
    \begin{align*}
        \mathcal{E}(v) \to \mathcal{E}_{q^k} \to \mathrm{Y}_{q,k} \hookrightarrow \mathbb{C}[\gufin],
    \end{align*}
    where $\mathcal{E}(v) \to \mathcal{E}_{q^k}$ is the specialization map at $v = q^{k/2}$. 
    
    The fact that $\mathrm{tr}$ is injective follows from Proposition \ref{prop:lau}. We now argue that $i$ is injective. In Section 3.4 of \cite{MarYoko}, which relies on the proof from the end of Section 2.1 of loc. cit., it is stated that the natural map from $\mathcal{E}_{q'}$ (called $\mathcal{C}_{W,q}^R$ in loc. cit.) to $\mathrm{Y}_{q'}$ is an embedding for ``generic" $q'$. Following the proof of this fact in Section 2.1 of loc. cit., one recovers a more precise statement: there exists some $r_0$ depending on $W$ such that the natural map $\mathcal{E}_{q'} \to \mathrm{Y}_{q'}$ sending $g_s \to -L_s$ is injective whenever $q' - 1$ has a prime factor larger than $r_0$. For $q = p^k$ any prime power, there exists an $a$ for which $q' = q^a$ satisfies this condition. Indeed, let $p'$ be a prime number greater than $r_0$ and distinct from $p$, and set $a = p' - 1$; then by Fermat's little theorem, $p'$ divides $q^a - 1$. Further, we can take large enough $a$ so that the specialization map $v \mapsto q^{a/2}$ taking $\mathcal{E}(v) \to \mathcal{E}_{q^a}$ is itself injective.

    Now note that Proposition \ref{prop:yoko} (applied to all powers of $q$ at once) shows that the images of $\mathrm{tr}$ and $i$ are equal, and that 
    \begin{align*}
        \mathrm{tr}(\mathsf{a}_s) = i(-g_s),\\
        \mathrm{tr}(\mathsf{e}_s) = i(e_s).
    \end{align*}

    Since we have just identified both $\mathrm{KL}(v)$ and $\mathcal{E}(v)$ as the same subalgebra of $\prod_{k\geq 1}\mathbb{C}[\gufin]$, it follows that the map
    \begin{align*}
        \psi : \widetilde{\mathrm{KL}}(v) \to \mathcal{E}(v)
    \end{align*}
    defined on generators by $\psi(\mathsf{a}_s) = -g_s$, $\psi(\mathsf{e}_s) = e_s$ is a well-defined algebra isomorphism. The same proof works to show that there is a well-defined isomorphism $\psi' : \mathrm{KL}(v) \to \mathcal{C}(v)$ with $\psi'(\mathsf{a}_s) = -g_s$.
\end{proof}

\subsection{Connection to Polishchuk's cubic relation}

As we have explained, since Theorem \ref{thm:mainthm} gives a full description of the algebra $\mathrm{KL}(v)$ can be thought of as an enhancement of previous results on some relations satisfied by the generators of this algebra.

Namely, in addition to the braid relations shown by Kazhdan and Laumon in \cite{KL}, Polishchuk found the additional relation
\begin{equation}\label{eqn:pol}
    (\mathsf{a}_s^2 - 1)(\mathsf{a}_s + v^2) = 0
\end{equation}
for any $s \in S$. We now explain how this relation can be recovered from our description. By Theorem \ref{thm:mainthm}, recalling that $\mathsf{a}_s$ can be identified with $-g_s$, it is enough to show that $(g_s^2 - 1)(g_s - v^2) = 0$ in $\mathcal{E}(v)$, so we can work with the relations in Definition \ref{def:evcv}. The last relation shows that $g_s^2 - 1 = (v^2 - 1)e_s(1 + g_s)$, so (\ref{eqn:pol}) becomes equivalent to $(g_s - v^2)(1 + g_s)e_s = 0$. This is is then obtained by multiplying the last relation in Definition \ref{def:evcv} by $e_s$.

An informal interpretation of the relation is that (\ref{eqn:pol}) is the least common multiple of the two expressions $\mathsf{a}_s^2 - 1$ and $(\mathsf{a}_s - 1)(\mathsf{a}_2 + v^2)$, the former of which is present in the Weyl group, and the latter in the Hecke algebra. Since we expect any relation in $\mathrm{KL}(v)$ to act trivially on $\mathbb{C}[\gufin]$, the relation (\ref{eqn:pol}) is then natural since it is the minimal relation among powers of $\mathsf{a}_s$ which acts trivially on $\mathbb{C}[\gufin]_\theta$ both when $s \not\in W_{\theta}^\circ$ and when $s \in W_{\theta}^\circ$ (with the Weyl group and Hecke algebra relations holding in these two situations respectively). In this sense, the algebra $\mathrm{KL}(v)$ can be thought of as the ``minimal" algebra containing all $\mathsf{a}_s$, $s \in S$ which encodes the relations of all monodromic Hecke algebras at once.

\section{Connection to monodromic Hecke categories}

\subsection{The monodromic Hecke algebra}

For a $W$-orbit $\mathfrak{o}$ of character sheaves on $T$, Lusztig and Yun define in Section 3.14 of \cite{LY} a unital associative $\mathbb{Z}[v, v^{-1}]$-algebra with generators $A_w$ (called $T_w$ in loc. cit.) for $w \in W$ and $1_{\mathcal{L}}$ ($\mathcal{L} \in \mathfrak{o}$) satisfying the relations

\begin{eqnarray*}
\notag&&1_{\mathcal{L}}1_{\mathcal{L}'}=\delta_{\mathcal{L},\mathcal{L}'}1_{\mathcal{L}},\quad\textup{ for }\mathcal{L},\mathcal{L}'\in\mathfrak{o};\\
\label{lengths add}&&A_wA_{w'}=A_{ww'},\quad\text{ if }w,w'\in W\text{ and } \ell(ww')=\ell(w)+\ell(w');\\
\notag&&A_w1_{\mathcal{L}}=1_{w\mathcal{L}}A_w, \quad\text{ for }w\in W,\mathcal{L}\in\mathfrak{o};\\
\label{quad rel}&&A_s^2=v^2A_1+(v^2-1)\sum_{\mathcal{L};s\in W_{\mathcal{L}}^\circ}A_s1_{\mathcal{L}},\text{  for simple reflections }s\in W;\\
\notag&&A_1=1=\sum_{\mathcal{L}\in\mathfrak{o}}1_{\mathcal{L}}.
\end{eqnarray*}

As in \cite{LY}, we set $\tilde{A}_w = v^{-\ell(w)}A_w \in \mathbf{H}_{\mathfrak{o}}$. In section 3.14 of \cite{LY}, the authors discuss a connection between this algebra and $K_0(\oplus_{\mathcal{L}, \mathcal{L}' \in \mathfrak{o}}(_{\mathcal{L'}}\mathcal{D}_{\mathcal{L}}))$, where the elements $\tilde{A}_{w}1_{\mathcal{L}}$ become associated with classes in the Grothendieck group of the standard monodromic sheaves $\Delta(w)_{\mathcal{L}}$ defined and studied in loc. cit. As a result, we can identify $\mathbf{H}_{\mathfrak{o}}$ with a subalgebra of $K_0(U \backslash G/U)$ under convolution generated by the $[\Delta(w)_{\mathcal{L}}]$ for $w \in W$, $\mathcal{L} \in \mathfrak{o}$. 

\subsection{Monodromic sheaves from Kazhdan-Laumon sheaves}

\begin{proposition}\label{prop:monodrom}
    Convolution with $\Delta(e)_{\mathcal{L}}$ induces a surjection  $\pi_{\mathcal{L}}$ from $\mathrm{KL}(v) \to \mathbf{H}_{\mathfrak{o}}$. We have
    \begin{align}
        \pi_{\mathcal{L}}(\mathsf{a}_s) = \begin{cases}
        -v\tilde{A}_{s}^{-1}1_{\mathcal{L}} & s \in W_{\mathcal{L}}^\circ\\
        -\tilde{A}_s1_{\mathcal{L}} & s \not\in W_{\mathcal{L}}^\circ
        \end{cases}
    \end{align}
\end{proposition}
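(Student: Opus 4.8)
The plan is to prove the generator formula by passing to trace functions and invoking the computation of $\mathsf{a}_s\cdot\varepsilon_\theta$ already carried out in Proposition~\ref{prop:maincomputation}, and then to promote this to a surjective algebra map by exhibiting $\pi_{\mathcal L}$ as the factorization of the $\mathrm{KL}(v)$-action on a monodromic principal series through $\mathbf{H}_{\mathfrak{o}}$.

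First I would fix the dictionary. Let $\theta:T(\mathbb{F}_{q^k})\to\mathbb{C}^\times$ be the multiplicative character attached to $\mathcal L$, so that $W_\theta^\circ=W_{\mathcal L}^\circ$, and recall that $[\Delta(e)_{\mathcal L}]=1_{\mathcal L}$ has trace function (up to a normalizing power of $q^{k/2}=v$) equal to $\varepsilon_\theta$, supported on the image of $T$ in $\gufin$. Since each $\mathsf{a}_s$ intertwines the $G$-action (as recalled after Definition~\ref{def:es}), the subspace $V_{\mathfrak{o}}=\bigoplus_{\theta'\in\mathfrak{o}}\mathbb{C}[\gufin]_{\theta'}$ is preserved by $\mathrm{KL}(v)$; this is the monodromic principal series on which $\mathbf{H}_{\mathfrak{o}}$ acts, and $\varepsilon_\theta$ is the vector recording the $\mathcal L$-component. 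By Proposition~\ref{prop:lau} the map $\mathrm{tr}$ is injective, so it suffices to match trace functions after specializing $v\mapsto q^{k/2}$.

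Next I would run the generator computation. Convolving with $\Delta(e)_{\mathcal L}$ amounts to applying $\mathsf{a}_s$ to $\varepsilon_\theta$, which is exactly $\mathsf{a}_s\cdot\varepsilon_\theta$ from Proposition~\ref{prop:maincomputation}, and the stated dichotomy is visibly governed by the $T_s$-character sums there. When $s\notin W_{\mathcal L}^\circ$ the result vanishes on $T$ and equals $\theta(t)$ times a genuine Gauss sum $\sum_{t''\in T_s(\mathbb{F}_{q^k})}\theta(t'')\psi(t'')$ on the cell $B(\mathbb{F}_{q^k})s$; this sum has absolute value $q^{k/2}$, matching the $v^{-1}$ in $\tilde{A}_s=v^{-1}A_s$, so the trace function is that of the single standard sheaf $-[\Delta(s)_{\mathcal L}]=-\tilde{A}_s 1_{\mathcal L}$. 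When $s\in W_{\mathcal L}^\circ$ the character sum degenerates ($\sum_{T_s}\theta=q^k-1$ and $\sum_{T_s}\psi=-1$), producing the values $(1-q^{-k})\theta(t)$ on $T$ and $-q^{-k}\theta(t)$ on $B(\mathbb{F}_{q^k})s$; these assemble into $(v^2-1)1_{\mathcal L}-v\tilde{A}_s 1_{\mathcal L}$, which by the quadratic relation of $\mathbf{H}_{\mathfrak{o}}$ restricted to the $\mathcal L$-block (giving $\tilde{A}_s^{-1}1_{\mathcal L}=(\tilde{A}_s-(v-v^{-1}))1_{\mathcal L}$) is precisely $-v\tilde{A}_s^{-1}1_{\mathcal L}$. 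Injectivity of $\mathrm{tr}$ then upgrades these function identities to the claimed equalities in $K_0(U\backslash G/U)$.

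Finally I would address the structural claim. That $\pi_{\mathcal L}$ is an algebra homomorphism, and surjective onto $\mathbf{H}_{\mathfrak{o}}$, I would deduce from the fact that the $\mathrm{KL}(v)$-action on $V_{\mathfrak{o}}$ factors through the (generically faithful) action of $\mathbf{H}_{\mathfrak{o}}$: convolution with $\Delta(e)_{\mathcal L}$ records $\pi_{\mathcal L}(a)$ through its action on the $\mathcal L$-component, and sweeping the generators $\mathsf{a}_s$ across the orbit (using $\tilde{A}_s 1_{\mathcal L}=1_{s\mathcal L}\tilde{A}_s$) produces all the $\tilde{A}_s^{\pm1}1_{\mathcal L'}$, $\mathcal L'\in\mathfrak{o}$, which generate $\mathbf{H}_{\mathfrak{o}}$. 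I expect the main obstacle to be this last structural step rather than the generator computation: it requires (i) the compatibility lemma that a Kazhdan--Laumon sheaf convolved with a monodromic sheaf is again monodromic, with monodromy shifted by the reflection, which is what makes convolution with $\Delta(e)_{\mathcal L}$ multiplicative; and (ii) careful normalization bookkeeping of the half-Tate twists so that the Gauss sum of modulus $q^{k/2}$ and the degenerate sums line up with the $v$-normalized generators $\tilde{A}_s$, together with the identification of the trace functions of the Lusztig--Yun standard sheaves $\Delta(w)_{\mathcal L}$. The faithfulness and genericity input in (i)--(ii) is of the same nature as the embedding argument used in the proof of Theorem~\ref{thm:mainthm}.
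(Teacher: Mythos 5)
Your proposal follows essentially the same route as the paper: reduce to an identity of trace functions via the injectivity of $\mathrm{tr}$ (Proposition \ref{prop:lau}), identify the function attached to $\Delta(e)_{\mathcal{L}}$ with $\varepsilon_\theta$, and read off the two cases of the formula from the Case 1/Case 2 computations in the proof of Proposition \ref{prop:maincomputation}. The paper's proof is terser --- it leaves the homomorphism/surjectivity step and the Gauss-sum/Tate-twist normalization implicit, both of which you correctly flag as the points requiring care --- but the substance is identical.
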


\begin{proof}
    Since the claim only concerns a single simple reflection, it is enough to prove the result for $G = \mathrm{SL}_2$. Let $j_s : U \backslash BsB/U \to U \backslash G /U$ and $j_e : U\backslash B/U \to U \backslash G/U$ be the inclusions of the open and closed Bruhat strata. Recall the projection to $T$ given by $\mathrm{pr}_s : U \backslash BsB/U \to T$ as defined in Section \ref{sec:klshconv}.

    Convolving the usual open-closed distinguished triangle for the maps $j_s$ and $j_e$ with $\Delta_{\mathcal{L}}(e)$ gives the distinguished triangle
    \begin{align}
        (j_{s!}j_{s}^*\overline{K(s)}) * \Delta_{\mathcal{L}}(e) \to \overline{K(s)} * \Delta_{\mathcal{L}}(e) \to (j_{e*}j_{e}^*\overline{K(s)}) * \Delta_{\mathcal{L}}(e).
    \end{align}
Since $U\backslash B/U \cong T$ and so $\Delta_{\mathcal{L}}(e) = j_{e!}(\mathcal{L}) = j_{e*}(\mathcal{L})$, proper base change and the projection formula imply
\begin{align}
    (j_{e*}j_{e}^*\overline{K(s)}) * \Delta_{\mathcal{L}}(e) & = (j_{e*}\mathcal{L})\otimes R\Gamma_c(T, \mathcal{L}),\label{eqn:closed}\\
    (j_{s!}j_{s}^*\overline{K(s)}) * \Delta_{\mathcal{L}}(e) & = j_{s!}\mathrm{pr}_s^*(\mathcal{L}[2](1))\otimes R\Gamma_c(T, \mathcal{L} \otimes \mathcal{L}_{\psi}).\label{eqn:open}
\end{align}
In this rank-one situation, if $s \in W_{\mathcal{L}}^\circ$, then $\mathcal{L}$ is constant. In this case, in (\ref{eqn:closed}) we get the tensor product of $\Delta_{\mathcal{L}}(e)$ itself with the compactly supported cohomology of $T \cong \mathbb{G}_\mathrm{m}$, whose class in $K_0$ is $(v^2 - 1)$. The factor $R\Gamma_c(T, \mathcal{L} \otimes \mathcal{L}_{\psi})$ in  (\ref{eqn:open}) is $1$-dimensional and concentrated in cohomological degree $1$, and has weight zero by Remark 4.4 of \cite[Applications de la formule des traces aux sommes trigonom{\'e}triques]{DCoh}. Taking the class in the Grothendieck group, and then summing the terms obtained from (\ref{eqn:closed}) and (\ref{eqn:open}), we then get $-v\tilde{A_s}^{-1}1_{\mathcal{L}}$.

Similarly, if $s \not\in W_{\mathcal{L}}^\circ$, then in this case $\mathcal{L}$ is not constant. So the term (\ref{eqn:closed}) is zero since $R\Gamma_c(T, \mathcal{L}) = 0$. Further, the same remark from \cite{DCoh} gives that $R\Gamma_c(T, \mathcal{L} \otimes \mathcal{L}_{\psi})$  is still concentrated in cohomological degree $1$, is one-dimensional, but this time has weight $1$. This leaves only a single term, whose class in the Grothendieck group gives $-\tilde{A_s}1_{\mathcal{L}}$ in the monodromic Hecke algebra.
\end{proof}

From now on, let $\pi = \pi_{\mathcal{L}_{\mathrm{triv}}}$ be the surjection $\mathrm{KL}(v) \to \mathcal{H}$.

\subsection{Lifting the Kazhdan-Lusztig basis of the Hecke algebra}

We now recall the Kazhdan-Lusztig basis $C_w$ of the Hecke algebra and its interpretation in terms of simple perverse sheaves $\mathrm{IC}(w)$ on $G/U$. The Kazhdan-Lusztig basis $\{C_w\}_{w \in W}$ of the Hecke algebra $\mathcal{H}$ is a basis which is uniquely characterized by the conditions that $C_w \in \tilde{A}_w + \sum_{y \leq w} v\mathbb{Z}[v]\tilde{A}_y$ and $\overline{C}_w = C_w$.

The following lemma is well-known (for instance, c.f. (3.5.3) of \cite{W}):
\begin{lemma}\label{lem:cmult}
    For any $w, w' \in W$ with $\ell(ww') = \ell(w) + \ell(w')$, we have 
    \begin{align*}
        C_{w}C_{w'} - C_{ww'} \in \mathrm{span}_{\mathbb{Z}} \{C_y\}_{y < ww'}
    \end{align*}
\end{lemma}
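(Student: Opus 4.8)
The plan is to deduce the statement from the self-duality of the Kazhdan--Lusztig basis together with its unitriangularity against the standard basis $\{\tilde{A}_x\}$. Set $D = C_wC_{w'} - C_{ww'}$. Since the bar involution is a ring endomorphism of $\mathcal{H}$ fixing each $C_x$, both $C_wC_{w'}$ and $C_{ww'}$ are bar-invariant, so $\overline{D} = D$. Writing $D = \sum_y h^y C_y$ with $h^y \in \mathbb{Z}[v,v^{-1}]$, the linear independence of the $C_y$ and the identities $\overline{C_y} = C_y$ force $\overline{h^y} = h^y$ for every $y$. Two things then remain: to show the $C$-support of $D$ lies strictly below $ww'$, and to upgrade the symmetric Laurent polynomials $h^y$ to integers.

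For the support, I would expand in the standard basis via $C_x = \tilde{A}_x + \sum_{u<x}(v\mathbb{Z}[v])\,\tilde{A}_u$, reducing to a bound on $\tilde{A}_u C_{w'}$ for $u \leq w$. Writing $\mathcal{H}_{\leq x} = \mathrm{span}\{\tilde{A}_z : z \leq x\}$, the basic lifting property ($sa>a$ implies $\tilde{A}_s \mathcal{H}_{\leq a} \subseteq \mathcal{H}_{\leq sa}$, while $sa<a$ gives $\tilde{A}_s\mathcal{H}_{\leq a}\subseteq\mathcal{H}_{\leq a}$) propagates along a reduced word. Choosing a reduced word for $u$ that is a subword of one for $w$ and using that $ww' = w \cdot w'$ is reduced, an induction on $l(u)$ that peels off the leading letter shows $\tilde{A}_u \mathcal{H}_{\leq w'} \subseteq \mathcal{H}_{\leq ww'}$; hence $C_wC_{w'} \in \mathcal{H}_{\leq ww'}$. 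A length count ($\tilde{A}_u\tilde{A}_{u'}$ reaches length at most $l(u)+l(u')$) shows that $\tilde{A}_{ww'}$ occurs in $C_wC_{w'}$ with coefficient exactly $1$, matching $C_{ww'}$. Thus $D$ has standard-basis support in $\{z < ww'\}$, and by unitriangularity $h^y = 0$ unless $y < ww'$.

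The main obstacle is the final upgrade of the bar-invariant coefficients $h^y$ to constants; this is precisely where length-additivity is essential, since already $C_s\tilde{A}_s = -v^{-1}\tilde{A}_s + 1$ shows that the $\mathbb{Z}[v]$-lattice $\Lambda = \bigoplus_z \mathbb{Z}[v]\,\tilde{A}_z$ is not preserved by multiplication when lengths do not add. The plan is to prove the containment $C_wC_{w'} \in \Lambda$ in the length-additive case; granting this, and noting $C_{ww'} \in \Lambda$, the difference $D$ lies in $\Lambda$ as well. I then conclude by a descending recursion: if $z_0$ is maximal in the standard-basis support of the bar-invariant remainder, then by unitriangularity its leading coefficient equals the corresponding $\tilde{A}_{z_0}$-coefficient, which lies in $\mathbb{Z}[v]$ by membership in $\Lambda$ and is bar-invariant by self-duality; a symmetric Laurent polynomial with no negative powers is a constant, so this coefficient lies in $\mathbb{Z}$. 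Subtracting that integer multiple of $C_{z_0}$ preserves membership in $\Lambda$, preserves bar-invariance, and strictly shrinks the support, so the recursion terminates with $D \in \mathrm{span}_{\mathbb{Z}}\{C_y\}_{y<ww'}$.

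I expect the lattice containment $C_wC_{w'} \in \Lambda$ to be the only genuinely delicate step. It amounts to showing that the negative powers of $v$ produced by the non-reduced products $\tilde{A}_u\tilde{A}_{u'}$ (with $u \leq w$, $u' \leq w'$) are exactly cancelled by the Kazhdan--Lusztig degree bounds on the entries of $C_w$ and $C_{w'}$; this cancellation is already visible in type $A_2$, where $C_{st}C_s = C_{sts} + C_s$. An alternative would be to prove the equivalent estimate $\deg_v h^y \leq 0$ for $y < ww'$ directly from the recursion $C_sC_y = C_{sy} + \sum_{z} \mu(z,y)C_z$ with integer $\mu(z,y)$, tracking degrees through the induction; the same delicacy reappears there through the terms $C_sC_y = (v+v^{-1})C_y$ arising when $sy<y$, whose non-constant coefficients must cancel in the end.
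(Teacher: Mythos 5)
Your reduction is correctly structured, and the parts you execute in full are fine: bar-invariance of $D=C_wC_{w'}-C_{ww'}$, the support bound $C_wC_{w'}\in\mathcal{H}_{\le ww'}$ via the lifting property (the cleanest form of that induction is $\mathcal{H}_{\le w}\mathcal{H}_{\le w'}\subseteq\mathcal{H}_{\le w_1}\mathcal{H}_{\le w'}+\tilde{A}_s\mathcal{H}_{\le w_1}\mathcal{H}_{\le w'}$ for $w=sw_1$ reduced, rather than subwords of $u$), the leading coefficient $1$ by length counting, and the descent argument showing that a bar-invariant element of $\bigoplus_z\mathbb{Z}[v]\tilde{A}_z$ lies in $\mathrm{span}_{\mathbb{Z}}\{C_y\}$. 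However, the lattice containment $C_wC_{w'}\in\Lambda=\bigoplus_z\mathbb{Z}[v]\tilde{A}_z$ is not a "delicate step" you can defer --- it is the entire content of the lemma beyond the (easy) statement that the structure constants lie in $\mathbb{Z}[v,v^{-1}]$ and are supported below $ww'$. Everything you prove would go through verbatim for $C_sC_s=-(v+v^{-1})C_s$ except this one containment, so the integrality is concentrated exactly where you stop. Neither of your two suggested routes closes it: term-by-term degree bookkeeping in the standard basis fails because $p_up_{u'}f_{u,u',z}$ genuinely has negative powers of $v$ for $u<w$, $u'<w'$ (the KL degree bound only gives $\min\deg p_u\ge 1$, which cannot absorb the $v^{l(z)-l(u)-l(u')}$ coming from $f_{u,u',z}$), and the recursion $C_sC_y=C_{sy}+\sum\mu(z,y)C_z$ does not propagate through an induction on $l(w)$ because writing $w=sw_1$ forces you to control products $C_zC_{w'}$ for $z<w_1$ where lengths no longer add, reintroducing $(v+v^{-1})$ terms --- as you yourself observe. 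So the proposal as written is an honest reduction with an open hole, not a proof.

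For comparison: the paper does not prove this lemma at all; it is quoted as well-known with a reference to (3.5.3) of \cite{W}. The standard arguments that actually establish the missing containment are either geometric --- $C_x=[\mathrm{IC}(x)]$ and the convolution map $\overline{BwB}\times^B\overline{Bw'B}\to\overline{Bww'B}$ is semismall and birational when $\ell(ww')=\ell(w)+\ell(w')$, so the decomposition theorem yields $C_wC_{w'}=C_{ww'}+\sum_{y<ww'}m_yC_y$ with $m_y\in\mathbb{Z}_{\ge0}$ (no shifts/twists precisely because of semismallness) --- or a more careful algebraic argument using the standard pairing on $\mathcal{H}$; in either case the input is substantially more than the self-duality and triangularity you invoke. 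If you want a self-contained proof, the semismallness route is the one to write out; otherwise, citing the literature as the paper does is the appropriate move.
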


There is a well-known geometric interpretation of the generators $\tilde{A}_w$ and $C_w$ of $\mathcal{H}$. Indeed, there are standard sheaves $\Delta(w)$ and $\mathrm{IC}(w)$ on $G/U$ which categorify these generators respectively, c.f. Chapter 7 of \cite{Achar} for a very clear exposition of this setup in the case of $G/B$; a version of this with a different normalization relevant to the Kazhdan-Laumon context was explained in Section 4.1 of \cite{KLO} for $G/U$. With this in mind, Lemma \ref{lem:cmult} shows that the span of $\{[\mathrm{IC}(w)]\}_{w \in W}$ is a subalgebra of $K_0(U \backslash G/U)$.

\begin{proposition}[Proposition 3.8 in \cite{MarYoko}]
    The assignment
    \begin{align*}
        \overline{g_s} & = g_s^{-1}\\
        \overline{e_s} & = e_s\\
        \overline{v} & = v^{-1}
    \end{align*}
    extends to well-defined ring automorphism of $\mathcal{E}(v)$. We will use the same bar notation to denote the corresponding involution on $\mathcal{C}(v)$, $\mathrm{KL}(v)$, and $\widetilde{\mathrm{KL}}(v)$ using the isomorphisms of the previous section.
\end{proposition}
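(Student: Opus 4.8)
The plan is to verify directly that the assignment respects the presentation of $\mathcal{E}(v)$ given in Definition \ref{def:evcv}. Extend $v \mapsto v^{-1}$ to the field automorphism $\sigma$ of $\mathbb{C}(v)$ fixing $\mathbb{C}$; what I want is a $\sigma$-semilinear ring endomorphism with $\overline{g_s} = g_s^{-1}$ and $\overline{e_r} = e_r$. By the universal property of a presentation by generators and relations, such a map exists precisely if the proposed images satisfy every defining relation after the scalars are transported by $\sigma$. Once existence is in hand, I would observe that the map is an involution — on generators $\overline{\overline{g_s}} = \overline{g_s^{-1}} = (g_s^{-1})^{-1} = g_s$ and $\overline{\overline{e_r}} = e_r$, while $\sigma^2 = \mathrm{id}$ — so it is bijective and hence a ring automorphism.

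The relations not involving $v$ transport easily. Since bar fixes each $e_r$, the idempotent relation $e_r^2 = e_r$, the commutativity $e_{r_1}e_{r_2} = e_{r_2}e_{r_1}$, the conjugation relation $e_{r_1}e_{r_2} = e_{r_1}e_{r_1^{-1}r_2r_1}$, and relation (\ref{eqn:CRW}) all map to themselves. The braid relations for the $g_s^{-1}$ follow from those for the $g_s$ by inverting both sides: the inverse of an alternating word is again an alternating word of the same length, so inverting $\underbrace{g_sg_tg_s\cdots}_{m_{st}} = \underbrace{g_tg_sg_t\cdots}_{m_{st}}$ yields exactly the braid relation for $g_s^{-1}$ and $g_t^{-1}$. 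For the commutation relation $g_se_r = e_{srs}g_s$, I would substitute $r \mapsto srs$ (using $s(srs)s = r$) to get $g_se_{srs} = e_rg_s$ and then conjugate by $g_s^{-1}$ on both sides, producing $g_s^{-1}e_r = e_{srs}g_s^{-1}$, which is the bar-image of the relation.

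The one genuinely computational point — and the step I expect to be the main obstacle — is the quadratic relation $g_s^2 = 1 + (v^2-1)e_s(1+g_s)$, which must be carried to $(g_s^{-1})^2 = 1 + (v^{-2}-1)e_s(1+g_s^{-1})$. First I would record that $e_s$ commutes with $g_s$: taking $r = s$ in the commutation relation and using $sss = s$ gives $g_se_s = e_sg_s$. Since $e_s$ is idempotent, $1 + (v^2-1)e_s$ is invertible with inverse $1 + (v^{-2}-1)e_s$; rewriting the quadratic relation as $g_s\bigl(g_s - (v^2-1)e_s\bigr) = 1 + (v^2-1)e_s$ and multiplying on the right by this inverse yields the closed form
\begin{align*}
    g_s^{-1} = g_s + (v^{-2}-1)e_s(1+g_s).
\end{align*}
From here a direct expansion — repeatedly reducing $g_s^2$ via the quadratic relation and using $e_s^2 = e_s$ together with $e_sg_s = g_se_s$ — shows that both $(g_s^{-1})^2$ and $1 + (v^{-2}-1)e_s(1+g_s^{-1})$ equal $1 + (v^{-2}-1)v^{-2}\,e_s(1+g_s)$, establishing the bar-image of the quadratic relation and completing the verification that bar is a well-defined $\sigma$-semilinear automorphism of $\mathcal{E}(v)$.

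Finally, to see that bar restricts to $\mathcal{C}(v)$, I would note that within $\mathcal{C}(v)$ each $g_s$ is already invertible: combining the quadratic relation with the commutation relation one eliminates $e_s$ to recover Polishchuk's cubic $(g_s-1)(g_s+1)(g_s-v^2) = 0$, equivalently $g_s^{-1} = g_s + v^{-2}(1-g_s^2)$, which is a polynomial in $g_s$ alone. Hence $\overline{g_s} = g_s^{-1} \in \mathcal{C}(v)$, so bar preserves the subalgebra generated by the $g_s$. Transporting along the isomorphisms $\mathrm{KL}(v) \cong \mathcal{C}(v)$ and $\widetilde{\mathrm{KL}}(v) \cong \mathcal{E}(v)$ of Theorem \ref{thm:mainthm} then equips all the relevant algebras with the corresponding involutions, as asserted.
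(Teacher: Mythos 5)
Your proposal is correct, but it is worth noting that the paper offers no argument of its own here: the proposition is simply imported as Proposition 3.8 of \cite{MarYoko} (where Marin proves it by essentially the same direct verification against the presentation). Your write-up is therefore a self-contained replacement for that citation, and the individual steps check out. The relations not involving $v$ are indeed transparently preserved; inverting an alternating word of length $m_{st}$ does give the braid relation for the $g_s^{-1}$; and the substitution $r\mapsto srs$ followed by conjugation by $g_s^{-1}$ handles the commutation relation. The one computation you flag as the crux is also right: since $g_se_s=e_sg_s$ and $e_s^2=e_s$, the element $1+(v^2-1)e_s$ is invertible with inverse $1+(v^{-2}-1)e_s$, and expanding $(g_s-(v^2-1)e_s)\bigl(1+(v^{-2}-1)e_s\bigr)$ does yield $g_s^{-1}=g_s+(v^{-2}-1)e_s(1+g_s)$; squaring this and reducing via $e_sg_s^2=v^2e_s+(v^2-1)e_sg_s$ shows both $(g_s^{-1})^2$ and $1+(v^{-2}-1)e_s(1+g_s^{-1})$ equal $1+v^{-2}(v^{-2}-1)e_s(1+g_s)$, as you claim. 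Two small points deserve a sentence each in a final version: first, before writing $\overline{g_s}=g_s^{-1}$ one should record that $g_s$ is in fact invertible in $\mathcal{E}(v)$ (your identity $g_s\bigl(g_s-(v^2-1)e_s\bigr)=1+(v^2-1)e_s$ does this, the putative inverse being a polynomial in the commuting elements $g_s,e_s$ and hence two-sided); second, the semilinear universal-property argument should be phrased as checking the $\sigma$-transported relations on a generating set of the defining two-sided ideal, which is exactly what you do. The closing observation that the cubic $(g_s-1)(g_s+1)(g_s-v^2)=0$ makes $g_s^{-1}$ a polynomial in $g_s$, so that bar preserves $\mathcal{C}(v)$, is also correct and is the cleanest way to justify the restriction claimed in the statement.
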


\begin{proposition}
    There exists a linearly independent set $\{\mathsf{c}_{w}\}_{w \in W}$ of $\mathrm{KL}(v)$ satisfying $\overline{\mathsf{c}_w} = \mathsf{c}_w$. It has the property that
    \begin{align*}
        \pi(\mathsf{c}_{w}) = C_{w}.
    \end{align*}
\end{proposition}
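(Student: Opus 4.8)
The plan is to obtain the $\mathsf{c}_w$ by first lifting the Kazhdan--Lusztig basis along the surjection $\pi$ and then symmetrizing with respect to the bar involution, with linear independence coming for free at the end. The one structural fact I need before anything else is that $\pi$ intertwines the two bar involutions, i.e. $\pi\circ\overline{(\cdot)}=\overline{(\cdot)}\circ\pi$. Since $\pi$ and both bar involutions are ring homomorphisms and the $\mathsf{a}_s$ generate $\mathrm{KL}(v)$, I would check this on the generators only. Transporting Marin's involution through the isomorphism $\psi'$ of the previous section (where $\psi'(\mathsf{a}_s)=-g_s$ and $\overline{g_s}=g_s^{-1}$) gives $\overline{\mathsf{a}_s}=\mathsf{a}_s^{-1}$; and because the trivial orbit has $W^\circ_{\mathcal{L}_{\mathrm{triv}}}=W$, the preceding proposition gives $\pi(\mathsf{a}_s)=-v\tilde{A}_s^{-1}$. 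Then $\pi(\overline{\mathsf{a}_s})=\pi(\mathsf{a}_s)^{-1}=-v^{-1}\tilde{A}_s$, which matches $\overline{\pi(\mathsf{a}_s)}=\overline{-v\tilde{A}_s^{-1}}=-v^{-1}\tilde{A}_s$, using $\overline{\tilde{A}_s^{-1}}=\tilde{A}_s$ in $\mathcal{H}$.

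Next I would record an explicit, natural lift. Since $\mathsf{a}_x=\mathsf{a}_{s_{i_1}}\cdots\mathsf{a}_{s_{i_k}}$ for any reduced word, the homomorphism property of $\pi$ gives $\pi(\mathsf{a}_x)=(-v)^{l(x)}\tilde{A}_{x^{-1}}^{-1}$. Substituting this into the formula for $C_w$ from the preliminaries, the length-dependent signs and powers of $v$ collapse and one obtains $C_w=(-1)^{l(w)}v^{-l(w)}\,\pi\bigl(\sum_{x\le w}P_{x,w}\,\mathsf{a}_x\bigr)$. I would therefore set $\hat{\mathsf{c}}_w:=(-1)^{l(w)}v^{-l(w)}\sum_{x\le w}P_{x,w}\,\mathsf{a}_x\in\mathrm{KL}(v)$, a canonical element satisfying $\pi(\hat{\mathsf{c}}_w)=C_w$.

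The hard part, and the real difference from the Hecke-algebra situation, is that $\hat{\mathsf{c}}_w$ is generally \emph{not} bar-invariant: a direct rank-one computation shows that $\overline{\hat{\mathsf{c}}_s}=\hat{\mathsf{c}}_s$ would force the spurious identity $e_s(1+g_s)=g_s-1$, which fails in $\mathcal{C}(v)$. To repair this I would symmetrize. Because we work over $\mathbb{C}(v)$ and the bar involution preserves $\mathrm{KL}(v)$, the element $\mathsf{c}_w:=\tfrac{1}{2}\bigl(\hat{\mathsf{c}}_w+\overline{\hat{\mathsf{c}}_w}\bigr)$ lies in $\mathrm{KL}(v)$ and is self-dual by construction; and by the bar-equivariance of $\pi$ together with $\overline{C_w}=C_w$ we get $\pi(\mathsf{c}_w)=\tfrac{1}{2}(C_w+\overline{C_w})=C_w$.

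Finally, linear independence is automatic: if $\sum_w\lambda_w\mathsf{c}_w=0$, applying $\pi$ yields $\sum_w\lambda_w C_w=0$, and since $\{C_w\}_{w\in W}$ is a basis of $\mathcal{H}$ all $\lambda_w$ vanish. The only genuinely delicate step is thus the self-duality, which rests on the bar-equivariance of $\pi$ verified above; I would emphasize that symmetrization sacrifices canonicity (the $\mathsf{c}_w$ depend on the chosen lift through their non-invariant part), so the output is a natural self-dual lift rather than a basis pinned down by a triangularity condition as in genuine Kazhdan--Lusztig theory.
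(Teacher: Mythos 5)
Your argument is correct, but it takes a genuinely different route from the paper's. The paper does not lift $C_w$ algebraically and symmetrize; instead it sets $\mathsf{c}_s=(\mathsf{a}_s^2-1)/(v-v^3)$, identifies this with the class $[\mathrm{IC}(s)]$ by a computation on functions, and then proves by induction on $\ell(w)$ -- using the multiplicativity property $C_wC_{w'}-C_{ww'}\in\mathrm{span}_{\mathbb{Z}}\{C_y\}_{y<ww'}$ -- that every $[\mathrm{IC}(w)]$ already lies in $\mathrm{KL}(v)$, taking $\mathsf{c}_w=[\mathrm{IC}(w)]$ for $w\neq 1$; bar-invariance and $\pi(\mathsf{c}_w)=C_w$ then follow by the same induction. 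What the paper's approach buys is exactly the canonicity you flag as lost at the end of your argument: its lift is the class of a simple perverse sheaf, which is the ``natural candidate for an analogue of a Kazhdan--Lusztig basis'' advertised in the introduction, and it comes with the triangularity $\mathsf{c}_w\in\mathsf{a}_w+\sum_{y<w}\mathbb{Z}[v]\,\mathsf{a}_y$ inherited from the IC sheaves. What your approach buys is economy: you need only the bar-equivariance of $\pi$ (correctly reduced to generators, where $\pi(\mathsf{a}_s)=-v\tilde{A}_s^{-1}$ and $\overline{\mathsf{a}_s}=\mathsf{a}_s^{-1}$) plus the explicit formula $\pi(\mathsf{a}_x)=(-v)^{l(x)}\tilde{A}_{x^{-1}}^{-1}$, and you avoid both the sheaf-level identification of $\mathsf{c}_s$ and the inductive membership argument. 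Both proofs establish the proposition as literally stated. One small slip in your parenthetical motivation: bar-invariance of $\hat{\mathsf{c}}_s=-v^{-1}(1+\mathsf{a}_s)$ would force $e_s(1+g_s)=1-g_s$ rather than $g_s-1$ (either identity is indeed false in $\mathcal{C}(v)$, e.g.\ because multiplying by $e_s$ would give $e_sg_s=0$ and hence $e_s=0$), so the conclusion that symmetrization is genuinely needed stands.
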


\begin{proof}
    We will first show that for any $w \neq 1$, $[\mathrm{IC}(w)] \in K_0(G/U)$ actually lies in $\mathrm{KL}(v)$; once we do that, we will define $\mathsf{c}_1 = 1$ and $\mathsf{c}_w = [\mathrm{IC}(w)]$ for any $w \neq 1$, and then check the properties in the proposition.

    We begin in the case where $w = s \in S$. In this case, we define
    \begin{align}
        \mathsf{c}_s & = \frac{\mathsf{a}_s^{2} - 1}{v - v^{3}}.
    \end{align}
    It is easy to check by working on the level of functions on $\gufin$ that $\mathsf{c}_s$ as defined above is equal to $[\mathrm{IC}(s)]$.
    
    We have
    \begin{align*}
        \pi(\mathsf{c}_s) & = \frac{v^{2}\tilde{A}_{s}^{-2} - 1}{v - v^{3}}\\
        & = \frac{v^{2}((v^{-1}-v)\tilde{A}_s + (v^{-2}-1+v^2)) - 1}{v - v^{3}}\\
        & = \tilde{A}_s - v = c_s.
    \end{align*}

    Using the relation $(\mathsf{a}_s^2 - 1)(\mathsf{a}_s + v^{2}) = 0$, it is a straightforward computation that
    \begin{align*}
        \mathsf{a}_s^{-2} - 1 = v^{-4}(1 - \mathsf{a}_s^2),
    \end{align*}
    which implies
    \begin{align*}
        \overline{\mathsf{c_s}} & = \frac{\mathsf{a}_s^{-2} - 1}{v^{-1} - v^{-3}}\\
        & = \frac{1 - \mathsf{a}_s^2}{v^4(v^{-1} - v^{-3})}\\
        & = \mathsf{c}_s.
    \end{align*}
    
    For the case of general $w \in W$, suppose for induction that $[\mathrm{IC}(y)] \in \mathrm{KL}(v)$ for all $y$ with $\ell(y) < \ell(w)$. Choose some $s \in S$ such that $\ell(sw) < \ell(w)$. By Lemma \ref{lem:cmult} and by our induction hypothesis along with the fact that $\mathrm{KL}(v)$ is a subalgebra, this implies $[\mathrm{IC}(sw)] \in \mathrm{KL}(v).$ Now we can define $\mathsf{c}_1 = 1$ and $\mathsf{c}_w = [\mathrm{IC}(w)]$ for all $w \neq 1$, and this clearly defines a linearly independent subset of $\mathrm{KL}(v)$.
    
    We now show that $\overline{\mathsf{c}_w} = \mathsf{c}_w$. Indeed, this is trivial for $\mathsf{c}_1$, and checked this for $\mathsf{c}_s$ above for any $s \in S$. By the definition of the duality on $\mathrm{KL}(v)$, it is multiplicative and $\mathbb{Z}$-linear, and so $\overline{\mathsf{c}_w} = \mathsf{c}_w$ follows again by induction on $\ell(w)$ using Lemma \ref{lem:cmult}

    Finally, we show that $\pi(\mathsf{c}_w) = C_w$ for all $w \in W$. First note that clearly the bar involution has the property that it lifts the usual bar involution on the Hecke algebra, so $\overline{\pi(\mathsf{c}_w)} = \pi(\mathsf{c}_w)$. Now once again, Lemma \ref{lem:cmult} implies the other defining property of $C_w$; this means we must have $\pi(\mathsf{c}_w) = C_w$. Alternatively, we could have simply observed that by the definition of $\mathsf{c}_w$, $\pi(\mathsf{c}_w)$ is $[\mathrm{IC}(w)] \in \mathcal{H} \cong K_0(B\backslash G /U)$ which corresponds to the canonical basis element $C_w$.
\end{proof}

\section{Dimension formula}

In this section, assume that we are in Type $A_n$, i.e. $G = \mathrm{SL}_{n+1}$. This is the case where $\mathcal{E}(v)$ becomes the usual algebra of braids and ties as originally defined in \cite{AJ}. The subalgebra $\mathcal{C}_{A_n}(v) \subset \mathcal{E}_{A_n}(v)$ was studied in more detail in this case in Section 4 of \cite{MarYoko}. In loc. cit., the author used a computer to compute the sequence of dimensions of $\mathcal{C}_{A_n}(v)$ for a few rational values of $v$, obtaining the table below.

\begin{figure}[ht]
\centering
    \begin{tabular}{|c|c|c|c|c|}
\hline
     $n$ &  1 & 2 & 3 & 4\\
\hline
     $\dim \mathcal{C}_{A_n}(v)$ &  3 & 20 & 217 & 3364\\
\hline
\end{tabular}
\end{figure}

In this section, we give an explicit formula for $\dim \mathcal{C}_{A_n}(v)$ for all values of $n$, extending this table.

\subsection{A formula in terms of reflection subgroups of $W$}

\begin{definition}
    We call a reflection subgroup $R' \subset W$ contiguous if its Dynkin type is connected. For any reflection subgroup $R \subset W$, we can write $R = \prod_{i=1}^\ell R_i'$ for some contiguous reflection subgroups $R_i'$ uniquely up to permutation of the $R_i'$; we call these the contiguous components of $R$. 
\end{definition}

\begin{definition}
    For any reflection subgroup $R$ of $W$, let $J_R$ be the vector subspace of $\mathcal{E}(v)$ spanned by all terms of the form
    \begin{align}\label{eqn:jrdef}
        g_w(1 + g_{r_1}) \dots (1 + g_{r_\ell}),
    \end{align}
    where $r_1, \dots, r_\ell$ are reflections lying in distinct contiguous components of $R$.
\end{definition}

\begin{lemma}
    For any reflection subgroup $R$ of $W$, $J_{R}e_{R} \subset \mathcal{C}(v)$. 
\end{lemma}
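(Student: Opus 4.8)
The plan is to reduce the whole statement to one computation with a single simple reflection. From the relation $g_s^2=1+(v^2-1)e_s(1+g_s)$ and from $g_se_s=e_sg_s$ (the case $r=s$ of $g_se_r=e_{srs}g_s$) one extracts
\[
\epsilon_s:=e_s(1+g_s)=\frac{g_s^2-1}{v^2-1}\in\mathcal{C}(v),\qquad \epsilon_sg_s=v^2\epsilon_s .
\]
The second identity is the cubic relation $(g_s^2-1)(g_s-v^2)=0$, which shows each $g_s$, and hence each $g_w$, is invertible inside $\mathcal{C}(v)$; moreover an easy induction on a reduced word gives the conjugation rule $g_we_rg_w^{-1}=e_{wrw^{-1}}$ for all $w$ and all reflections $r$. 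These are the only facts about the defining relations I will need.

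First I would peel off the components. Writing $R=\prod_{i=1}^{\ell}R_i'$ as a product of its contiguous components, distinct components are orthogonal, so for $i\neq j$ each reflection of $R_i'$ commutes with each reflection of $R_j'$; by the conjugation rule this gives $e_R=\prod_i e_{R_i'}$ and shows that $e_{R_j'}$ commutes with $g_{r_i}$ whenever $i\neq j$. Consequently a spanning vector $g_w(1+g_{r_1})\cdots(1+g_{r_\ell})e_R$ may be rearranged, by sliding each $e_{R_i'}$ next to its matching factor $1+g_{r_i}$, into the product of pairwise-commuting factors $g_w\prod_{i=1}^{\ell}\big[(1+g_{r_i})e_{R_i'}\big]$. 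As $g_w\in\mathcal{C}(v)$ and $\mathcal{C}(v)$ is a subalgebra, it suffices to treat a single contiguous $R'$, i.e. to prove $(1+g_r)e_{R'}\in\mathcal{C}(v)$ for a reflection $r\in R'$.

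For the contiguous case I work in type $A$, where every reflection subgroup is a Young subgroup and hence $W$-conjugate to a standard parabolic $R_0'=\langle\Sigma_0\rangle$, $\Sigma_0\subseteq S$. Since $g_xe_{R_0'}g_x^{-1}=e_{xR_0'x^{-1}}$, conjugating by a suitable $g_x$ transports the statement for $R_0'$ to the statement for $R'$, provided one has the compatibility $g_xg_{r_0}g_x^{-1}=g_{xr_0x^{-1}}$ of braid lifts with conjugation. So assume $R'$ standard parabolic and set $\mathcal{C}_{R'}(v)=\langle g_\sigma:\sigma\in\Sigma_0\rangle\subseteq\mathcal{C}(v)$. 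Because every $\sigma\in R'$, the idempotent $e_{R'}$ is central in the algebra generated by $\mathcal{C}_{R'}(v)$ and $e_{R'}$, and imposing $e_{R'}=1$ turns the cubic relation into the Hecke quadratic; thus $e_{R'}\mathcal{C}_{R'}(v)$ is a quotient of the Hecke algebra $H$ of $R'$. A short induction using $e_{r_1}e_{r_2}=e_{r_1}e_{r_1^{-1}r_2r_1}$ yields the key identity $\prod_{\sigma\in\Sigma_0}\epsilon_\sigma=e_{R'}\,u$ with $u:=\prod_{\sigma\in\Sigma_0}(1+g_\sigma)\in\mathcal{C}_{R'}(v)$, exhibiting $\epsilon_{R'}:=\prod_\sigma\epsilon_\sigma$ as an element of $\mathcal{C}(v)$ lying in the $e_{R'}$-corner. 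For a standard parabolic one has $g_r\in\mathcal{C}_{R'}(v)$ and $(1+g_r)e_{R'}=e_{R'}(1+g_r)$; writing $\overline{(\cdot)}$ for images in $H$, the plan is to show that $\overline{1+g_r}$ lies in the two-sided ideal $H\bar uH$. Granting this, say $\overline{1+g_r}=\sum_j\bar a_j\bar u\bar b_j$, one lifts $a_j,b_j$ to $\mathcal{C}_{R'}(v)$ and uses centrality of $e_{R'}$ to get $\sum_j a_j\epsilon_{R'}b_j=e_{R'}\sum_j a_jub_j=(1+g_r)e_{R'}$, where the left-hand side visibly lies in $\mathcal{C}(v)$.

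The crux is therefore the Hecke-theoretic claim $\overline{1+g_r}\in H\bar uH$. Over $\mathbb{C}(v)$ the algebra $H$ is semisimple, so this ideal is the sum of the matrix blocks on which $\bar u$ acts by a nonzero operator; both $\bar u=\prod_\sigma(1+\bar g_\sigma)$ and $\overline{1+g_r}$ vanish in the sign representation, so the claim reduces to showing that $\bar u\neq0$ in every non-sign irreducible of $H$ --- the point at which the type-$A$ (Specht module) combinatorics genuinely enters, and which I expect to be the main obstacle. As an alternative that sidesteps this representation-theoretic input, one could instead compute the function on $\gufin$ attached to $(1+g_r)e_{R'}$ and match it, via $\mathrm{tr}$-injectivity (Proposition \ref{prop:lau}), against a combination of the classes $\mathsf{c}_w=[\mathrm{IC}(w)]$, which were shown to lie in $\mathrm{KL}(v)\cong\mathcal{C}(v)$; a secondary technical point in either approach is the conjugation compatibility $g_xg_{r_0}g_x^{-1}=g_{xr_0x^{-1}}$ used to pass from standard parabolics to general reflection subgroups.
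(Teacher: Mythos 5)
Your reduction to a single contiguous component (sliding each $e_{R_i'}$ next to its matching factor $1+g_{r_i}$, using that distinct components of a reflection subgroup in type $A$ commute elementwise) is sound and matches the paper's first step, and your identities $\epsilon_\sigma=e_\sigma(1+g_\sigma)\in\mathcal{C}(v)$, $\prod_\sigma\epsilon_\sigma=e_{R'}\prod_\sigma(1+g_\sigma)$, and the observation that $e_{R'}\mathcal{C}_{R'}(v)$ is a Hecke-algebra quotient are all correct. But the argument does not close: the decisive claim, that $\overline{1+g_r}$ lies in the two-sided ideal $H\bar uH$ --- which you correctly reduce, via semisimplicity of $H$ over $\mathbb{C}(v)$, to the assertion that $\bar u=\prod_{\sigma\in\Sigma_0}(1+\bar g_\sigma)$ acts by a nonzero operator on every non-sign irreducible of the type-$A$ Hecke algebra --- is asserted rather than proved, and you yourself flag it as ``the main obstacle.'' That assertion is a nontrivial statement about Specht modules (a product of non-commuting rank-deficient projections), and without it the proof establishes nothing beyond the case where $r$ is itself one of the chosen simple generators. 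The fallback you sketch (computing functions on $\gufin$ and matching against the $[\mathrm{IC}(w)]$) is likewise only a suggestion, and it is not clear that $(1+g_r)e_{R'}$ for a non-simple reflection $r$ is expressible in terms of those classes without essentially redoing the missing work.

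For comparison, the paper avoids any representation-theoretic input by a double induction: first on the number of contiguous components, then on the rank $k$ of a contiguous component of type $A_k$. The rank induction produces elements $(g_{r'}+1)(g_r+1)e_{\langle r,r'\rangle}\in\mathcal{C}(v)$ for every pair of reflections generating an $A_2$ subsystem (by completing $\{r,r'\}$ to a generating set of $R$ and applying the hypothesis in rank $k-1$), conjugates to the standard $A_2$, and then extracts $(g_r+1)e_{\langle r,r'\rangle}$ from the span of the elements $g_w(g_{s_1}+1)(g_{s_2}+1)e_{\langle s_1,s_2\rangle}$ by an explicit computation in the rank-$2$ Hecke algebra; the general statement then follows by moving $r$ to a generator through a chain of such $A_2$'s. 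If you want to salvage your route, you would either need to prove the non-vanishing of $\bar u$ on every non-sign Specht module, or replace the global ideal-membership claim by the paper's local $A_2$ computation. A secondary point you flag but should not dismiss: since $J_R$ is defined using the positive braid lifts $g_r$ of reflections, and $g_xg_{r_0}g_x^{-1}\neq g_{xr_0x^{-1}}$ in the braid group, the passage from standard parabolics to general reflection subgroups needs a separate argument that the discrepancy is absorbed after multiplying by $e_R$.
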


\begin{proof}
    By induction on the number of contiguous components of $R$, it is enough to show that for any contiguous reflection subgroup $R$ (i.e. one of type $A_k$ for some $k \leq n$) of $W$ and any reflection $r \in R$, we have $(g_r + 1)e_R \in \mathcal{C}(v)$.

    Suppose for induction (the case $k = 1$ is tautological) that this result holds for all reflection subgroups of type $A_{k'}$, $k < k$. We will first show that for any reflections $r, r' \in R$ for which $\langle r, r'\rangle$ is itself a reflection subgroup of type $A_2$ that $(g_r' + 1)(g_r + 1)e_{\langle r, r'\rangle} \in \mathcal{C}(v)$. To do this, using the fact that all reflection subgroups $R$ of type $A_2$ are conjugate, we can choose a set of reflections $\{r_1, \dots, r_{k-2}\}$ in $R$ such that $\{r_1, \dots, r_{k-2}, r', r\}$ generates $R$ and for which $\{r_1, \dots, r_{k-2}, r'\}$ is of type $A_{k-1}$. So applying our induction hypothesis, $(g_{r'} + 1)e_{\langle r_1, \dots, r_{k-2}, r'\rangle} \in \mathcal{C}(v)$, and so $(g_{r'} + 1)(g_r + 1)e_{R}$ is too. Similarly we can reverse the roles of $r$ and $r'$ to obtain $(g_r + 1)(g_{r'} + 1)e_R$ as well.

    Now using again the conjugacy in $R$ of reflection subgroups of type $A_2$, let $y \in R$ be such that $yry^{-1} = s_1, yr'y^{-1} = s_2$. Then we can deduce that
    \begin{align*}
        g_y^{-1}(g_{s_1} + 1)(g_{s_2} + 1)e_{\langle s_1, s_2\rangle}g_y, g_{y}^{-1}(g_{s_2} + 1)(g_{s_1} + 1)e_{\langle s_1, s_2\rangle}g_y \in \mathcal{C}(v).
    \end{align*}
    Since the $g = g_{s_1}e_{\langle s_1, s_2\rangle}$ or $g = g_{s_2}e_{\langle s_1, s_2\rangle}$ satisfy the usual quadratic Hecke relation $(g - v^2)(g + 1) = 0$, it is a straightforward computation in the Hecke algebra of type $A_2$ to observe that
    \begin{align*}
        & (g_{s_1} - 1)e_{\langle s_1, s_2\rangle}, (g_{s_2} - 1)e_{\langle s_1, s_2\rangle}\\
        &  \in \mathrm{span} \{g_2(g_{s_1} + 1)(g_{s_2} + 1)e_{\langle s_1, s_2\rangle}, g_w(g_{s_2} + 1)(g_{s_1} + 1)e_{\langle s_1, s_2\rangle}\}_{w \in \langle s_1, s_2\rangle}.
    \end{align*}
    By conjugating back by $g_y$, we deduce that $(g_r + 1)e_{\langle r, r'\rangle}, (g_{r'} + 1)e_{\langle r, r'\rangle} \in \mathcal{C}(v)$.

    Now in the general case, note that for any reflection $r \in W$, $(g_r + 1)e_{\langle r\rangle} \in \mathcal{C}(v)$ by the quadratic relation satisfied by $g_r$. So for the rank $k$ reflection subgroup $R$ in question, we have
    \begin{align*}
        (g_{r_{i_1}} + 1)e_{r_{i_1}}\dots (g_{r_{i_k}} + 1)e_{r_{i_k}} = (g_{r_{i_1}} + 1)\dots (g_{r_{i_k}} + 1)e_R \in \mathcal{C}(v)
    \end{align*}
    for $\{r_1, \dots, r_k\}$ a generating set of reflections for $R$ and $(i_{j})_{j=1}^k$ any permutation of the indices. By then applying our rank $2$ result to pairs of reflections in this generating set, the result that $(g_r + 1)e_R \in \mathcal{C}(v)$ for any $r \in R$ follows too by induction.
\end{proof}

\begin{lemma} As vector spaces over $\mathbb{C}(v)$, we have
    \begin{align}\label{eqn:brjr}
    \mathcal{C}(v) &= \bigoplus_{R\subset W} J_{R}e_{R}.
\end{align}
where $R$ ranges over all reflection subgroups of $W$. 
\end{lemma}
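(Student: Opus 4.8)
The plan is to establish the decomposition in (\ref{eqn:brjr}) by proving two things: first, that $\mathcal{C}(v)$ is spanned by the subspaces $J_R e_R$ as $R$ ranges over reflection subgroups, and second, that this sum is direct. The previous lemma already gives the containment $J_R e_R \subset \mathcal{C}(v)$ for each $R$, so the bulk of the work lies in the \emph{spanning} statement and the \emph{directness}.

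\textbf{Spanning.} First I would argue that $\mathcal{C}(v)$, being generated by the $g_s$, is spanned by elements of the form $g_w \cdot p$, where $p$ is a product of factors coming from the idempotent relations. The key mechanism is the quadratic relation $g_s^2 = 1 + (v^2-1)e_s(1 + g_s)$ from Definition \ref{def:evcv}: whenever we try to reduce a non-reduced word in the $g_s$, this relation forces the appearance of a factor $e_s(1 + g_s)$. Iterating, any product of $g_s$'s can be rewritten as a $\mathbb{C}(v)$-linear combination of terms $g_w (1 + g_{r_1}) \cdots (1 + g_{r_\ell}) e_{\langle r_1, \dots, r_\ell \rangle}$, where the idempotents $e_r$ get pushed to the right using the commutation relation $g_s e_r = e_{srs} g_s$ and collected via $e_{r_1} e_{r_2} = e_{r_2} e_{r_1}$ and $e_r^2 = e_r$. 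After collecting the idempotents into a single $e_R$ for the reflection subgroup $R$ generated by the relevant reflections, and using the relation (\ref{eqn:CRW}) to normalize $e_R$ by its closed symmetric root subsystem, each such term lands in some $J_R e_R$. The one subtlety is arranging the reflections $r_1, \dots, r_\ell$ to lie in \emph{distinct contiguous components} of $R$, which I expect follows because within a single contiguous component the rank-$2$ computation in the previous lemma lets us trade off a factor $(1 + g_{r'})$ against lower-order terms, so only one factor per component survives.

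\textbf{Directness.} To show the sum is direct, I would pass to functions via the injective trace map $\mathrm{tr}$ (Proposition \ref{prop:lau}) and the identification of $\mathcal{C}(v)$ with $\mathrm{KL}(v)$ from Theorem \ref{thm:mainthm}. The idempotent $e_R$ specializes to (a scalar multiple of) the projector onto the $\theta$-isotypic subspaces for characters $\theta$ with $p(R) \subseteq \ker$-type condition, i.e. those $\theta$ whose reflection subgroup $W_\theta^\circ$ contains $R$; this is precisely the content of Lemma \ref{lem:proj} applied component-by-component. Distinct reflection subgroups $R$ (up to the equivalence $p(R) = p(R')$) therefore cut out distinct supports among the characters $\theta$, so the images of the $J_R e_R$ under $\mathrm{tr}$ live in linearly independent pieces indexed by the value of $W_\theta^\circ$, giving directness of the sum.

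\textbf{The main obstacle} will be the spanning step, specifically the bookkeeping that guarantees one can always reduce to the normal form with at most one factor $(1 + g_{r_i})$ per contiguous component and with the reflections chosen so that (\ref{eqn:jrdef}) is exactly realized. This requires combining the rank-$2$ reduction from the previous lemma (which shows $(g_r - 1)e_{\langle r, r'\rangle}$ lies in the span of the $J e$ terms) with a careful induction on word length and on the number of contiguous components, while keeping track of how the relation (\ref{eqn:CRW}) identifies idempotents attached to reflection subgroups with the same closed symmetric root subsystem. I would organize this as an induction on $l(w)$ together with the rank of $R$, reducing the general case to the already-established contiguous rank-$\leq 2$ cases.
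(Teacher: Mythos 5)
Your spanning argument follows essentially the same route as the paper's: start from words in $\{g_s^{\pm 1}\}_{s\in S}$, use the quadratic relation to trade each $g_s^2$ for $1+(v^2-1)(1+g_s)e_s$, move the group-like factors to one side using $(1+g_t)e_t\,g_s^{-1}=g_s^{-1}(1+g_sg_tg_s^{-1})e_{tst}$, and then reduce the conjugated factors $(1+bg_tb^{-1})e_{btb^{-1}}$ back to the normal form (\ref{eqn:jrdef}) by an explicit rank-two identity. Your sketch is compatible with all of this, and the point you single out as the main obstacle --- arranging at most one factor $(1+g_{r_i})$ per contiguous component --- is indeed the delicate step (the paper is also terse there, relying on the identity $(1+g_sg_tg_s^{-1})e_{sts}=(1+g_{sts})e_{sts}+(u^{-1}-1)g_sg_t(1+g_s)e_{\langle s,t\rangle}$ and an induction on the length of the conjugating word).

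The genuine divergence is the directness claim. The paper's proof in fact only establishes the spanning statement, so your attempt to supply the directness is a real addition --- but the argument as written does not work. By Lemma \ref{lem:proj}, $e_R$ acts (at each specialization) as the projector onto the span of the $\theta$-isotypic pieces with $R\subseteq W_\theta^\circ$, and these supports are \emph{nested}, not disjoint: if $R\subseteq R'$ then the image of $e_{R'}$ is contained in the image of $e_R$. Hence ``distinct supports'' does not give linear independence of the subspaces $J_Re_R$; an element of $J_{R'}e_{R'}$ lies entirely inside the support of $e_R$ and could a priori be matched by an element of $J_Re_R$. To repair this one would need a triangularity argument with respect to inclusion of reflection subgroups (for instance, replacing the $e_R$ by the M\"obius-inverted orthogonal idempotents cutting out exactly those $\theta$ with $W_\theta^\circ=R$), together with a separate argument that, on each such isotypic piece, the relevant $J_R$-parts remain independent --- something neither your proposal nor the paper's own proof currently addresses.
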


\begin{proof}
    The algebra $\mathcal{C}(v)$ is, by definition, spanned by words in $\{g_s^{\pm 1}\}_{s \in S}$. Using the quadratic relation $g_s^2 = 1 + (v^2 - 1)(1 + g_s)e_s$, we then see that any element of $\mathcal{C}(v)$ is in the $\mathbb{C}(v)$-span of words in the set of terms $\{g_s^{-1}\}_{s \in S}\cup \{(1 + g_s)e_s\}_{s \in S}$. For any $s, t \in S$, we have
    \begin{align*}
        (1 + g_t)e_tg_s^{-1} & = g_s^{-1}(1 + g_sg_tg_s^{-1})e_{tst}.
    \end{align*}
    By repeatedly using this relation along with the quadratic relation on a given word to move all $g_s$ terms to the left and reducing any powers $g_s^2$, $s \in S$ which occur we are left with a spanning set consisting of words of the form
    \begin{align}\label{eqn:messyform}
        g_w^{-1}(1 + b_1g_{t_1}b_1^{-1})e_{b_1t_1b_1^{-1}} \dots (1 + b_\ell g_{t_\ell}b_{\ell}^{-1}) e_{b_\ell t_\ell b_\ell^{-1}},
    \end{align}
    where $b_i$ is some product of the $\{g_s\}_{s \in S}$ and each $t_i$ lies in $S$.
    We want to reduce this further to show that $\mathcal{C}(v)$ is spanned by the set of elements of the form
    \begin{align}\label{eqn:cleanform}
        g_w^{-1}(1 + g_{r_1})\dots (1 + g_{r_\ell})e_{R}
    \end{align}
    for $R$ a reflection subgroup and $r_i$ reflections, each of which lies in a distinct contiguous component of $R$.

    To reduce terms of the form (\ref{eqn:messyform}) to those of the form (\ref{eqn:cleanform}), by induction on the number of terms it is enough to show that each term of the form $(1 + bg_{t}b^{-1})e_{b^{-1}tb}$ can be written in the form (\ref{eqn:cleanform}). This fact follows from induction on the number of terms in $b$ once one shows it for $b = g_s$ for some $s \in S$. In this case,
    \begin{align*}
        (1 + g_sg_tg_s^{-1})e_{sts} & = (1 + g_{sts})e_{sts} + (v^{-2} - 1)g_sg_t(1 + g_s)e_{\langle s, t\rangle},
    \end{align*}
    which is of the desired form. By following the induction as mentioned and using this relation, one obtains that all elements of $\mathcal{C}(v)$ can be written as a $\mathbb{C}(v)$-linear combination of elements of the form (\ref{eqn:cleanform}).

    To see that this sum is direct, we note that each $e_R$, when specialized to $\mathbb{C}[\gufin]$, can be thought of as an idempotent projection to the subspace of functions monodromic with respect to all $\theta$ for which $R \subset W_{\theta}^\circ$. So every $R$ determines a subset $M_R$ of multiplicative characters of $T(\mathbb{F}_{q^k})$ such that $e_R \cdot \mathbb{C}[\gufin]_{\theta} = \mathbb{C}[\gufin]_{\theta}$, with $e_R \cdot \mathbb{C}[\gufin]_{\theta'} = 0$ for every $\theta' \not\in M_R$. There is a direct sum decomposition $\mathbb{C}[\gufin] = \oplus_\theta \mathbb{C}[\gufin]_\theta$, and recall that we can choose $k$ large enough so the specialization map $\mathcal{C}(v) \to \mathbb{C}[\gufin]$ is injective by Section \ref{subsec:genparam}. So since each $R$ determines a distinct such subset $M_R$, there can be no nontrivial linear relations between elements of $\mathcal{C}(v)e_R$ for distinct $R$.
\end{proof}

\begin{definition}
    For any subset $I \subset S$, let $D_I$ be the number of elements $w \in W$ such that for each contiguous block $I' \subset I$, there is a simple reflection $s \in I'$ for which $\ell(ws) < \ell(w).$
\end{definition}

Since we are in Type $A$, any reflection subgroup is conjugate to a parabolic subgroup $P_I \subset W$ corresponding to some $I \subset S$. 
\begin{lemma}
    Let $R$ be any reflection subgroup of $W$ conjugate to $P_I$. Then $\dim J_Re_{R} = D_I$.
\end{lemma}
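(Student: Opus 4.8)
The plan is to realize $J_Ie_I$ as an induced module over a parabolic Hecke algebra and then match its dimension to $D_I$ by a coset count. Throughout I take $R=P_I$ (the dimension only depends on the conjugacy class of $R$) and write $I=I_1\sqcup\cdots\sqcup I_\ell$ for its contiguous blocks, so that $P_I=\prod_j P_{I_j}$ and $e_I=\prod_j e_{I_j}$. The first step is to record the local structure: for $s\in I$ the relation $g_se_r=e_{srs}g_s$ together with $sP_Is=P_I$ and relation (\ref{eqn:CRW}) gives $g_se_I=e_Ig_s$, and multiplying $g_s^2=1+(v^2-1)e_s(1+g_s)$ by $e_I$ yields $(g_se_I)^2=(v^2-1)(g_se_I)+v^2e_I$. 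Hence $\{g_se_I\}_{s\in I}$ generate a copy of the parabolic Hecke algebra $\mathcal{H}_I\cong\bigotimes_j\mathcal{H}_{I_j}$ (elements of distinct blocks commuting) with basis $\{g_xe_I:x\in P_I\}$.

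Next I analyze the symmetrizer factors $(1+g_{r_i})e_{I_i}$. For a single block set $\tilde{M}_j=\sum_r\mathcal{H}_{I_j}(1+g_r)e_{I_j}$, the left ideal generated by the $(1+g_r)e_{I_j}$ over all reflections $r\in P_{I_j}$. The sign character of $\mathcal{H}_{I_j}$ (the algebra map sending $g_se_{I_j}\mapsto -e_{I_j}$, so that $g_re_{I_j}\mapsto -e_{I_j}$ for every reflection $r$) kills each $1+g_r$, whence $\tilde{M}_j\subseteq\ker(\mathrm{sign})$; conversely the left ideal generated by just $\{(1+g_s)e_{I_j}\}_{s\in I_j}$ already has codimension $1$, since in the quotient $\overline{g_xe_{I_j}}=(-1)^{l(x)}\overline{e_{I_j}}$ collapses everything to the sign line. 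Thus $\tilde{M}_j=\ker(\mathrm{sign})$ has dimension $|P_{I_j}|-1$. Because the blocks commute, $\tilde{M}:=\prod_j\tilde{M}_j\cong\bigotimes_j\tilde{M}_j$ has dimension $\prod_j(|P_{I_j}|-1)$, and $\{g_x\prod_i(1+g_{r_i})e_I:x\in P_I\}$ spans $\tilde{M}$.

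Now I assemble $J_Ie_I$. Factoring each $w\in W$ as $w=ux$ with $u\in W^I$ the minimal representative of $wP_I$ and $x\in P_I$, so that $g_w=g_ug_x$ with lengths adding, every spanning element $g_w\prod_i(1+g_{r_i})e_I$ lies in $g_u\tilde{M}$; hence $J_Ie_I=\sum_{u\in W^I}g_u\tilde{M}$. Since $\tilde{M}\subseteq\mathrm{span}\{g_xe_I:x\in P_I\}$, each $g_u\tilde{M}$ lies in $\mathrm{span}\{g_ye_I:y\in uP_I\}$, and these spans attach to disjoint cosets; granting linear independence of $\{g_ye_I\}_{y\in W}$ the sum is direct, giving $\dim J_Ie_I=|W^I|\prod_j(|P_{I_j}|-1)$. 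It remains to match this with $D_I$: writing $w=ux$ with $x=\prod_jx_j$ and using $u\in W^I$, one computes $l(ws)-l(w)=l(x_js)-l(x_j)$ for $s\in I_j$, so $w$ has a right descent in $I_j$ precisely when $x_j\neq e$. Therefore the $w$ counted by $D_I$ are exactly those with every $x_j\neq e$, and $D_I=|W^I|\prod_j(|P_{I_j}|-1)$, as needed.

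The main obstacle is the linear independence of $\{g_ye_I\}_{y\in W}$, which is what upgrades the evident spanning bound to an equality. I expect to obtain it from the faithful realization $\widetilde{\mathrm{KL}}(v)\cong\mathcal{E}(v)\hookrightarrow\prod_{k}\mathbb{C}[\gufin]$ (Proposition \ref{prop:lau} together with Theorem \ref{thm:mainthm}): under it $e_I$ becomes the projection onto $\bigoplus_{\theta:\,P_I\subseteq W_\theta^\circ}\mathbb{C}[\gufin]_\theta$ while $g_y$ acts by a Yokonuma--Hecke operator shifting by the Bruhat cell of $y$, so independence follows from the Bruhat decomposition; alternatively one can detect it through a map $\pi_{\mathcal{L}}$ to a monodromic Hecke algebra for a character sheaf $\mathcal{L}$ with $W_{\mathcal{L}}^\circ=P_I$.
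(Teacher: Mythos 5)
Your proof is correct, but it takes a genuinely different route from the paper's. The paper passes immediately to the group algebra: using the quadratic relation $(g_s-v^2)(g_s+1)e_{P_I}=0$ it identifies $\dim J_Ie_I$ with the dimension of the left ideal of $\mathbb{C}[W]$ generated by the elements $(1+r_1)\cdots(1+r_k)$, and then computes the codimension of that ideal by exhibiting a spanning set for the quotient indexed by the $w$ having no right descent in some contiguous block; the count $D_I$ falls out by complementation (and the explicit inclusion--exclusion formula is deferred to Lemma \ref{lem:di}). You instead stay inside $\mathcal{E}(v)$, identify $\mathrm{span}\{g_xe_I\}_{x\in P_I}$ with the parabolic Hecke algebra $\bigotimes_j\mathcal{H}_{I_j}$, show each blockwise ideal is exactly $\ker(\mathrm{sign})$ of dimension $|P_{I_j}|-1$, and decompose $J_Ie_I=\bigoplus_{u}g_u\widetilde{M}$ over minimal coset representatives $u$ of $W/P_I$, arriving at the closed form $\dim J_Ie_I=[W:P_I]\prod_j(|P_{I_j}|-1)$, which your descent computation correctly identifies with $D_I$ (and which is a cleaner closed form than the inclusion--exclusion expression in Lemma \ref{lem:di}). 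What your approach buys is the explicit induced-module structure of $J_Ie_I$ and the product formula; what the paper's buys is brevity, at the cost of an unstated deformation step. The one genuinely load-bearing point you defer --- the linear independence of $\{g_ye_I\}_{y\in W}$, equivalently that $\mathrm{span}\{g_we_I\}$ is free of rank $|W|$ --- is needed in both arguments (the paper uses it implicitly when transporting the dimension count to $\mathbb{C}[W]$), and your proposed verifications are sound: choosing a character sheaf $\mathcal{L}$ trivial on the coroots in $I$, so that $e_I$ acts as the identity on the $\mathcal{L}$-monodromic block, the images $\pi_{\mathcal{L}}(g_ye_I)$ are triangular with invertible diagonal with respect to the basis $\{\tilde{A}_z1_{\mathcal{L}}\}$ of $\mathbf{H}_{\mathfrak{o}}1_{\mathcal{L}}$, which gives the independence. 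Writing out that triangularity would make your argument fully self-contained.
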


\begin{proof}
    By invariance of dimension under conjugation, it is enough to prove this for $R = P_I$ for some $I \subset S$. In this case, we recall the fact that for $s \in P_I$, each $g_sP_I$ satisfies a quadratic relation, ie.. $(g_s - v^2)(g_s + 1)e_{P_I} = 0$. This implies that $\dim J_Re_R$ is equal to the dimension of the left ideal of the group algebra $\mathbb{C}[W]$ generated by
    \begin{align}
        (1 + r_1) \dots (1 + r_k) \in \mathbb{C}[W],
    \end{align}
    where each $r_i$ is in a distinct contiguous component of $I$. If we let $J_I'$ be this ideal, it is straightforward to check that $\mathbb{C}[W]/J_I'$ is spanned by equivalence classes $\overline{w}$ of $w \in W$ such that there exists a contiguous block $I'$ of $I$ for which no reduced expression for $w$ has an element of $I'$ as a suffix. Thus, the dimension of $\mathbb{C}[W]/J_I'$ is exactly the complement of the set described in the statement of the lemma, and so the size of this latter set agrees with $\dim J_I' = \dim J_Ie_I = \dim J_Re_R$.
\end{proof}

\subsection{An explicit formula}

\begin{definition}
    Let $P(n)$ be the set of all partitions $(\lambda_i)_{i=1}^k$ for which
    \begin{align}
        \left(\sum_{i=1}^k \lambda_i \right) + k - 1 \leq n.
    \end{align}
    We can identify $P(n)$ with a choice of contiguous blocks $I \subset S$ of the Dynkin diagram of type $A_n$ whose size is decreasing, e.g. when $n = 7$, the partition $(2, 2, 1)$ corresponds to the choice pictured here.
    \begin{center}
\resizebox{!}{0.23cm}{
\begin{tikzpicture}[line width=0.045cm,cc1/.style={minimum size=0.75cm,path picture={
\fill (0,0) circle[radius=2mm];},node contents={}},
cc2/.style={circle,draw,inner sep=0pt,minimum size=0.75cm,path picture={
\fill (0,0) circle[radius=2mm];},node contents={}},scale=1.5]
\path (0,0) node[cc2];
\path (1,0) node[cc2];
\path (2,0) node[cc1];
\path (3,0) node[cc2];
\path (4,0) node[cc2];
\path (5,0) node[cc1];
\path (6,0) node[cc2];
\draw (0,0) -- (1,0);
\draw (1,0) -- (2,0);
\draw (2,0) -- (3,0);
\draw (3,0) -- (4,0);
\draw (4,0) -- (5,0);
\draw (5,0) -- (6,0);
\end{tikzpicture}}
\end{center}
Given any subset $I \subset S$, we can reorder its contiguous blocks so that their size is decreasing, and define $\lambda^I$ to be the associated partition. Finally, for any $i \geq 1$ let $n_i(I)$ be the number of parts of $\lambda^I$ which are equal to $i$.
\end{definition}

\begin{lemma}[\cite{How}]\label{lem:how}
    Given any subset $I \subset S$, the normalizer of the parabolic subgroup $W_I$ of $W$ generated by the simple reflections in $I$ has order
    \begin{align}
        \left(n + 1 - \sum_i (i + 1)n_i(I)\right)! \cdot \prod_{i} n_i(I)!(i + 1)!^{n_i(I)}.
    \end{align}
\end{lemma}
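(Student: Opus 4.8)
The plan is to translate the statement into the language of symmetric groups, where $W = S_{n+1}$ acts on $\{1, \dots, n+1\}$ and a standard parabolic $W_I$ is a Young (partition-stabilizer) subgroup, and then to invoke the classical description of the normalizer of a Young subgroup as an iterated wreath product.

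First I would fix the dictionary. Writing $s_i = (i\ \ i{+}1)$, a contiguous block $I'$ of $I$ consisting of $i$ consecutive simple reflections generates the symmetric group on a run of $i+1$ consecutive letters. Hence $W_I$ is the setwise stabilizer of the set partition $\pi$ of $\{1, \dots, n+1\}$ into the intervals determined by $I$: there are $n_i(I)$ intervals of length $i+1$ (one for each contiguous block with $i$ reflections), together with $m = n+1 - \sum_i (i+1)n_i(I)$ remaining singleton letters. Equivalently, $W_I$ is the Young subgroup $\prod_i S_{i+1}^{\,n_i(I)} \times S_1^{\,m}$.

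Then I would establish the key step: for a Young subgroup associated to $\pi$, with $c_j$ blocks of size exactly $j$, the normalizer $N_W(W_I)$ equals the stabilizer of the unordered partition $\pi$, and this stabilizer is the direct product $\prod_j (S_j \wr S_{c_j})$. The inclusion $\mathrm{Stab}(\pi) \subseteq N_W(W_I)$ is immediate, since a permutation carrying blocks to blocks conjugates each symmetric factor onto a factor. For the reverse inclusion, I would use that the orbits of $W_I$ on $\{1, \dots, n+1\}$ are precisely the blocks of $\pi$; any $g$ with $gW_Ig^{-1} = W_I$ must carry $W_I$-orbits to $W_I$-orbits while preserving their sizes, so $g$ permutes the blocks of $\pi$ among blocks of equal cardinality, i.e. $g \in \mathrm{Stab}(\pi)$. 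The wreath-product structure of $\mathrm{Stab}(\pi)$ is then the standard fact that a partition-stabilizer acts by an arbitrary permutation inside each block and freely permutes the blocks of each fixed size.

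Finally I would read off the order. The factor $S_{i+1} \wr S_{n_i(I)}$ (taking $j = i+1 \geq 2$, so $c_{i+1} = n_i(I)$) contributes $((i+1)!)^{n_i(I)}\, n_i(I)!$, while the singletons contribute $S_1 \wr S_m$ of order $m! = (n+1 - \sum_i (i+1)n_i(I))!$; multiplying these gives exactly the stated expression. The only genuine content is the normalizer-equals-partition-stabilizer identification, which is precisely the type-$A$ specialization of Howlett's general theorem; accordingly the main obstacle is expository rather than mathematical, namely setting up the orbit/block correspondence so that the length-$(i+1)$ intervals, the leftover singleton letters, and the factorials align with the indices $n_i(I)$ as written in the formula.
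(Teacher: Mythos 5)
Your argument is correct. Note, though, that the paper offers no proof at all here: the lemma is simply quoted from Howlett's general theorem on normalizers of parabolic subgroups of Coxeter groups, specialized to type $A_n$. What you have written is a self-contained, purely combinatorial replacement for that citation: you identify $W_I$ with the Young subgroup stabilizing the interval partition $\pi$ of $\{1,\dots,n+1\}$, observe that the $W_I$-orbits are exactly the blocks of $\pi$ so that any normalizing element must permute blocks of equal size (giving $N_W(W_I)=\mathrm{Stab}(\pi)$), and then read off $|\mathrm{Stab}(\pi)|=\prod_j (j!)^{c_j}c_j!$ as a product of wreath products. All three steps are sound, and the bookkeeping (a contiguous block of $i$ reflections giving an interval of $i+1$ letters, the leftover $n+1-\sum_i(i+1)n_i(I)$ singletons contributing the leading factorial) matches the formula as stated. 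The trade-off between the two routes is the usual one: the citation is shorter and would generalize immediately if one later needed the analogous count outside type $A$ (where the normalizer is a semidirect product of $W_I$ with a complement that is not just a block-permutation group), while your argument is elementary, verifiable in a few lines, and makes the wreath-product shape of the answer transparent. Either is acceptable; if you include yours, the one point worth stating explicitly is the direction $\mathrm{Stab}(\pi)\subseteq N_W(W_I)$ via $gS_Bg^{-1}=S_{g(B)}$, which you do.
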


\begin{lemma}\label{lem:di}
    For any $I \subset S$,
    \begin{align}
        D_I = \sum_{T \subset \{\lambda_i^I\}_{i}} (-1)^{|T|} \frac{(n+1)!}{\prod_{\lambda_i^I \in T} (\lambda_i^I+1)!}
    \end{align}
    where the sum is taken over all subpartitions of $\lambda^I$, including $\lambda^I$ itself.
\end{lemma}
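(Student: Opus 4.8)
The goal is to prove the inclusion-exclusion formula for $D_I$, which counts elements $w \in W = S_{n+1}$ such that for each contiguous block $I' \subset I$, at least one simple reflection $s \in I'$ satisfies $l(ws) < l(w)$. The plan is to reinterpret the descent condition combinatorially and then apply inclusion-exclusion over the contiguous blocks of $I$.

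\medskip

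First I would translate the defining condition into the language of descents. For $w \in S_{n+1}$ and a simple reflection $s_j = (j, j+1)$, the condition $l(ws_j) < l(w)$ means $j$ is a right descent of $w$, i.e. $w(j) > w(j+1)$. A contiguous block $I'$ of $I$ corresponds to a set of consecutive simple reflections, say indexing positions $\{a, a+1, \dots, b\}$; the requirement is that $w$ has \emph{at least one} descent among these positions. The complementary (bad) event for the block $I'$ is that $w$ is \emph{increasing} on the corresponding window of consecutive positions $\{a, a+1, \dots, b+1\}$. Since distinct contiguous blocks of $I$ involve disjoint (and in fact separated, since blocks are maximal contiguous runs) sets of positions, the bad events for different blocks are governed by disjoint windows.

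\medskip

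The key step is then inclusion-exclusion. For a subset $T$ of the contiguous blocks of $I$ (equivalently, a subpartition $T \subset \{\lambda_i^I\}_i$, since each block has a size $\lambda_i^I$ and the part $\lambda_i^I$ is the number of simple reflections in that block), let $B_T$ be the number of $w \in W$ that are increasing on the window of each block in $T$. By inclusion-exclusion over which blocks fail to have a descent,
\begin{align*}
D_I = \sum_{T \subset \{\lambda_i^I\}_i} (-1)^{|T|} B_T.
\end{align*}
It remains to compute $B_T$. A block with part size $\lambda_i^I$ consists of $\lambda_i^I$ consecutive simple reflections, hence a window of $\lambda_i^I + 1$ consecutive positions on which $w$ must be increasing. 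Because the windows for distinct blocks are disjoint, forcing $w$ to be increasing on each such window is the same as prescribing, for each block, that the values on that window appear in increasing order — a condition that cuts the count by a factor of $(\lambda_i^I + 1)!$ per block and is independent across blocks. Concretely, out of the $(n+1)!$ permutations, the fraction that is increasing on a given window of size $\lambda_i^I+1$ is $1/(\lambda_i^I+1)!$, and the conditions for disjoint windows multiply, giving
\begin{align*}
B_T = \frac{(n+1)!}{\prod_{\lambda_i^I \in T}(\lambda_i^I + 1)!}.
\end{align*}
Substituting this into the inclusion-exclusion sum yields exactly the claimed formula.

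\medskip

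I expect the main obstacle to be the bookkeeping that justifies the independence/multiplicativity of the window conditions, namely the rigorous argument that requiring a permutation to be increasing on several disjoint windows multiplies the restricting factors and that the count of such permutations is $(n+1)!$ divided by the product of the window factorials. The cleanest way to see this is to note that choosing a permutation of $\{1,\dots,n+1\}$ that is increasing on each prescribed window is equivalent to first choosing an unordered assignment of values to positions and then being forced into increasing order within each window; equivalently one can set up a bijection with coset representatives of the Young subgroup generated by the simple reflections inside the windows. A small but genuine point to check is that the contiguous blocks of $I$ are separated (there is always a gap of at least one omitted simple reflection between them, by maximality of contiguous blocks), so the corresponding windows of positions are genuinely disjoint; this is what makes the events independent and the factorials multiply cleanly.
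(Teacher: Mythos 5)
Your proof is correct and follows essentially the same route as the paper: inclusion--exclusion over the set of contiguous blocks that fail to contain a descent, together with the count of permutations with no descent in the prescribed (disjoint) windows as $(n+1)!/\prod(\lambda_i^I+1)!$ via minimal coset representatives of the corresponding Young subgroup. The paper states this in one sentence; your version just spells out the descent translation and the disjointness of the windows.
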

\begin{proof}
    Indeed, this follows by the principle of inclusion-exclusion, since the quantity $$\frac{(n + 1)!}{\prod_{\lambda_i^I \in T}(\lambda_i + 1)!}$$ is equal to the number of elements $w \in W$ having no reduced expression ending with any of the simple reflections in any of the blocks $\lambda_i^I$ in a given subset $T$ of blocks.
\end{proof}

\begin{theorem}
    For any $I \subset S$, let $R_I$ be the number of reflection subgroups of $W$ conjugate to $P_I$ (which is $|W|$ divided by the quantity in Lemma \ref{lem:how}), and let $D_I$ be as above. Then the dimension of $\mathcal{C}(v)$ in Type $A_n$ is given by
    \begin{align}
        \dim \mathcal{C}(v) & = \sum_{I \subset S} R_I \cdot D_I.
    \end{align}
\end{theorem}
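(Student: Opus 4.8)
The plan is to assemble the dimension formula directly from the vector-space decomposition already established. By the lemma giving $\mathcal{C}(v) = \bigoplus_{R \subset W} J_R e_R$, where $R$ ranges over \emph{all} reflection subgroups of $W$, the dimension is a sum of $\dim(J_R e_R)$ over all such subgroups. The key simplification, valid because we are in Type $A_n$, is that every reflection subgroup is conjugate to a standard parabolic $P_I$ for some $I \subset S$, and by the lemma computing $\dim J_I e_I = D_I$ together with the conjugation-invariance of dimension, every reflection subgroup conjugate to $P_I$ contributes exactly $D_I$ to the total.

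First I would group the direct sum $\bigoplus_{R} J_R e_R$ by conjugacy class. Each conjugacy class of reflection subgroups contains a representative of the form $P_I$ for some $I \subset S$, and distinct $I$ giving the same parabolic type up to conjugacy must be accounted for carefully; in practice one indexes the conjugacy classes by subsets $I \subset S$ (equivalently, by the partitions of the relevant sizes via the $P(n)$ parametrization) and counts the number $R_I$ of reflection subgroups in the class of $P_I$. Then the decomposition rewrites as
\begin{align*}
    \dim \mathcal{C}(v) = \sum_{R \subset W} \dim(J_R e_R) = \sum_{I \subset S} R_I \cdot D_I,
\end{align*}
since each of the $R_I$ subgroups conjugate to $P_I$ contributes the common value $D_I$.

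The remaining ingredient is the value of $R_I$. Since the reflection subgroups conjugate to $P_I$ are precisely the $W$-conjugates of $P_I$, their number is the index of the normalizer, i.e.\ $R_I = |W| / N_I$, where $N_I$ is the order of the normalizer of $P_I$ in $W$. This order is supplied explicitly by Howlett's formula (Lemma \ref{lem:how}) in terms of the partition data $n_i(I)$, and $D_I$ is given explicitly by the inclusion-exclusion formula of Lemma \ref{lem:di}. Substituting these yields a fully explicit expression, which matches the form $|W| \sum_{I \subset S} D_I / N_I$ stated in the introduction and recovers the numerical table $1, 3, 20, 217, 3364, \dots$ upon specialization.

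I expect the main obstacle to be bookkeeping rather than conceptual: namely, ensuring that the index set for the outer sum correctly enumerates conjugacy classes of reflection subgroups without double-counting, and verifying that the parametrization by subsets $I \subset S$ (with the partition reordering built into $\lambda^I$) assigns to each conjugacy class the right multiplicity $R_I$. The fact that in Type $A_n$ every reflection subgroup is a \emph{parabolic} up to conjugacy is what makes this clean, so I would state that reduction carefully and then let the two explicit lemmas do the computational work.
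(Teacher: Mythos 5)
Your proposal is correct and takes essentially the same route as the paper, which gives no separate proof of this theorem beyond the preceding lemmas: you combine the decomposition $\mathcal{C}(v) = \bigoplus_{R} J_R e_R$, the computation $\dim J_R e_R = D_I$ for $R$ conjugate to $P_I$ (via conjugation-invariance), and the orbit count $R_I = |W|/N_I$ from Howlett's formula, exactly as intended. Your remark about indexing the outer sum by conjugacy classes of reflection subgroups (rather than literally all subsets $I \subset S$) is the right reading of the notation and consistent with the paper's tables.
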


\subsection{Examples and comparison with \cite{MarYoko}}

We conclude by giving three examples of how these formulas work in practice by verifying the computations done by Marin in Section 4.3 of \cite{MarYoko} in types $A_2, A_3, A_4$. Note that the sequence of dimensions obtained in loc. cit. in these cases is $20, 217, 3364$ (it is clear that $\dim \mathcal{C}_{A_1}(v) = 3$). We will now show how this sequence arises from our computations.

In Figure \ref{fig:smallrank}, we write $N_I$ for the dimension of the normalizer of $W_I$, i.e. the quantity given in Lemma \ref{lem:how}. Then $R_I$ is obtained by $R_I = (n + 1)!/N_I$ as explained previously, with $D_I$ being given by the formula in Lemma \ref{lem:di}.

\begin{figure}[ht]
    \centering
    \begin{tabular}{|c|c|c|c|}
\hline
$I$ & $N_I$ & $R_I$ & $D_I$
\\
\hline
\resizebox{!}{0.23cm}{
\begin{tikzpicture}[line width=0.045cm,cc1/.style={minimum size=0.75cm,path picture={
\fill (0,0) circle[radius=2mm];},node contents={}},
cc2/.style={circle,draw,inner sep=0pt,minimum size=0.75cm,path picture={
\fill (0,0) circle[radius=2mm];},node contents={}},scale=1.5]
\path (0,0) node[cc2];
\path (1,0) node[cc2];
\draw (0,0) -- (1,0);
\end{tikzpicture}} & $6$ & $1$ & $5$\\
    \hline
    \resizebox{!}{0.23cm}{
\begin{tikzpicture}[line width=0.045cm,cc1/.style={minimum size=0.75cm,path picture={
\fill (0,0) circle[radius=2mm];},node contents={}},
cc2/.style={circle,draw,inner sep=0pt,minimum size=0.75cm,path picture={
\fill (0,0) circle[radius=2mm];},node contents={}},scale=1.5]
\path (0,0) node[cc2];
\path (1,0) node[cc1];
\draw (0,0) -- (1,0);
\end{tikzpicture}} & $3$ & $2$ & $3$ \\
    \hline
    \resizebox{!}{0.23cm}{
\begin{tikzpicture}[line width=0.045cm,cc1/.style={minimum size=0.75cm,path picture={
\fill (0,0) circle[radius=2mm];},node contents={}},
cc2/.style={circle,draw,inner sep=0pt,minimum size=0.75cm,path picture={
\fill (0,0) circle[radius=2mm];},node contents={}},scale=1.5]
\path (0,0) node[cc1];
\path (1,0) node[cc1];
\draw (0,0) -- (1,0);
\end{tikzpicture}} & $6$ & $1$ & $6$ \\
\hline
\end{tabular}
\quad 
\begin{tabular}{|c|c|c|c|}
\hline
$I$ & $N_I$ & $R_I$ & $D_I$
\\
\hline
\resizebox{!}{0.23cm}{
\begin{tikzpicture}[line width=0.045cm,cc1/.style={minimum size=0.75cm,path picture={
\fill (0,0) circle[radius=2mm];},node contents={}},
cc2/.style={circle,draw,inner sep=0pt,minimum size=0.75cm,path picture={
\fill (0,0) circle[radius=2mm];},node contents={}},scale=1.5]
\path (0,0) node[cc2];
\path (1,0) node[cc2];
\path (2,0) node[cc2];
\draw (0,0) -- (1,0);
\draw (1,0) -- (2,0);
\end{tikzpicture}} & $24$ & $1$ & $23$\\
    \hline
\resizebox{!}{0.23cm}{
\begin{tikzpicture}[line width=0.045cm,cc1/.style={minimum size=0.75cm,path picture={
\fill (0,0) circle[radius=2mm];},node contents={}},
cc2/.style={circle,draw,inner sep=0pt,minimum size=0.75cm,path picture={
\fill (0,0) circle[radius=2mm];},node contents={}},scale=1.5]
\path (0,0) node[cc2];
\path (1,0) node[cc2];
\path (2,0) node[cc1];
\draw (0,0) -- (1,0);
\draw (1,0) -- (2,0);
\end{tikzpicture}} & $6$ & $4$ & $20$ \\
    \hline\resizebox{!}{0.23cm}{
\begin{tikzpicture}[line width=0.045cm,cc1/.style={minimum size=0.75cm,path picture={
\fill (0,0) circle[radius=2mm];},node contents={}},
cc2/.style={circle,draw,inner sep=0pt,minimum size=0.75cm,path picture={
\fill (0,0) circle[radius=2mm];},node contents={}},scale=1.5]
\path (0,0) node[cc2];
\path (1,0) node[cc1];
\path (2,0) node[cc2];
\draw (0,0) -- (1,0);
\draw (1,0) -- (2,0);
\end{tikzpicture}} & $8$ & $3$ & $6$ \\
    \hline\resizebox{!}{0.23cm}{
\begin{tikzpicture}[line width=0.045cm,cc1/.style={minimum size=0.75cm,path picture={
\fill (0,0) circle[radius=2mm];},node contents={}},
cc2/.style={circle,draw,inner sep=0pt,minimum size=0.75cm,path picture={
\fill (0,0) circle[radius=2mm];},node contents={}},scale=1.5]
\path (0,0) node[cc2];
\path (1,0) node[cc1];
\path (2,0) node[cc1];
\draw (0,0) -- (1,0);
\draw (1,0) -- (2,0);
\end{tikzpicture}} & $4$ & $6$ & $12$ \\
    \hline
    \resizebox{!}{0.23cm}{
\begin{tikzpicture}[line width=0.045cm,cc1/.style={minimum size=0.75cm,path picture={
\fill (0,0) circle[radius=2mm];},node contents={}},
cc2/.style={circle,draw,inner sep=0pt,minimum size=0.75cm,path picture={
\fill (0,0) circle[radius=2mm];},node contents={}},scale=1.5]
\path (0,0) node[cc1];
\path (1,0) node[cc1];
\path (2,0) node[cc1];
\draw (0,0) -- (1,0);
\draw (1,0) -- (2,0);
\end{tikzpicture}} & $24$ & $1$ & $24$ \\
\hline
\end{tabular}
\quad
\begin{tabular}{|c|c|c|c|}
\hline
$I$ & $N_I$ & $R_I$ & $D_I$
\\
\hline
\resizebox{!}{0.23cm}{
\begin{tikzpicture}[line width=0.045cm,cc1/.style={minimum size=0.75cm,path picture={
\fill (0,0) circle[radius=2mm];},node contents={}},
cc2/.style={circle,draw,inner sep=0pt,minimum size=0.75cm,path picture={
\fill (0,0) circle[radius=2mm];},node contents={}},scale=1.5]
\path (0,0) node[cc2];
\path (1,0) node[cc2];
\path (2,0) node[cc2];
\path (3,0) node[cc2];
\draw (0,0) -- (1,0);
\draw (1,0) -- (2,0);
\draw (2,0) -- (3,0);
\end{tikzpicture}} & $120$ & $1$ & $119$\\
    \hline
    \resizebox{!}{0.23cm}{
\begin{tikzpicture}[line width=0.045cm,cc1/.style={minimum size=0.75cm,path picture={
\fill (0,0) circle[radius=2mm];},node contents={}},
cc2/.style={circle,draw,inner sep=0pt,minimum size=0.75cm,path picture={
\fill (0,0) circle[radius=2mm];},node contents={}},scale=1.5]
\path (0,0) node[cc2];
\path (1,0) node[cc2];
\path (2,0) node[cc2];
\path (3,0) node[cc1];
\draw (0,0) -- (1,0);
\draw (1,0) -- (2,0);
\draw (2,0) -- (3,0);
\end{tikzpicture}} & $24$ & $5$ & $115$ \\
    \hline
\resizebox{!}{0.23cm}{
\begin{tikzpicture}[line width=0.045cm,cc1/.style={minimum size=0.75cm,path picture={
\fill (0,0) circle[radius=2mm];},node contents={}},
cc2/.style={circle,draw,inner sep=0pt,minimum size=0.75cm,path picture={
\fill (0,0) circle[radius=2mm];},node contents={}},scale=1.5]
\path (0,0) node[cc2];
\path (1,0) node[cc2];
\path (2,0) node[cc1];
\path (3,0) node[cc2];
\draw (0,0) -- (1,0);
\draw (1,0) -- (2,0);
\draw (2,0) -- (3,0);
\end{tikzpicture}} & $12$ & $10$ & $50$ \\
    \hline
    \resizebox{!}{0.23cm}{
\begin{tikzpicture}[line width=0.045cm,cc1/.style={minimum size=0.75cm,path picture={
\fill (0,0) circle[radius=2mm];},node contents={}},
cc2/.style={circle,draw,inner sep=0pt,minimum size=0.75cm,path picture={
\fill (0,0) circle[radius=2mm];},node contents={}},scale=1.5]
\path (0,0) node[cc2];
\path (1,0) node[cc2];
\path (2,0) node[cc1];
\path (3,0) node[cc1];
\draw (0,0) -- (1,0);
\draw (1,0) -- (2,0);
\draw (2,0) -- (3,0);
\end{tikzpicture}} & $12$ & $10$ & $100$ \\
    \hline
    \resizebox{!}{0.23cm}{
\begin{tikzpicture}[line width=0.045cm,cc1/.style={minimum size=0.75cm,path picture={
\fill (0,0) circle[radius=2mm];},node contents={}},
cc2/.style={circle,draw,inner sep=0pt,minimum size=0.75cm,path picture={
\fill (0,0) circle[radius=2mm];},node contents={}},scale=1.5]
\path (0,0) node[cc2];
\path (1,0) node[cc1];
\path (2,0) node[cc2];
\path (3,0) node[cc1];
\draw (0,0) -- (1,0);
\draw (1,0) -- (2,0);
\draw (2,0) -- (3,0);
\end{tikzpicture}} & $8$ & $15$ & $30$ \\
    \hline
    \resizebox{!}{0.23cm}{
\begin{tikzpicture}[line width=0.045cm,cc1/.style={minimum size=0.75cm,path picture={
\fill (0,0) circle[radius=2mm];},node contents={}},
cc2/.style={circle,draw,inner sep=0pt,minimum size=0.75cm,path picture={
\fill (0,0) circle[radius=2mm];},node contents={}},scale=1.5]
\path (0,0) node[cc2];
\path (1,0) node[cc1];
\path (2,0) node[cc1];
\path (3,0) node[cc1];
\draw (0,0) -- (1,0);
\draw (1,0) -- (2,0);
\draw (2,0) -- (3,0);
\end{tikzpicture}} & $12$ & $10$ & $60$ \\
    \hline
    \resizebox{!}{0.23cm}{
\begin{tikzpicture}[line width=0.045cm,cc1/.style={minimum size=0.75cm,path picture={
\fill (0,0) circle[radius=2mm];},node contents={}},
cc2/.style={circle,draw,inner sep=0pt,minimum size=0.75cm,path picture={
\fill (0,0) circle[radius=2mm];},node contents={}},scale=1.5]
\path (0,0) node[cc1];
\path (1,0) node[cc1];
\path (2,0) node[cc1];
\path (3,0) node[cc1];
\draw (0,0) -- (1,0);
\draw (1,0) -- (2,0);
\draw (2,0) -- (3,0);
\end{tikzpicture}} & $120$ & $1$ & $120$ \\
    \hline
\end{tabular}

\begin{align*}
    \dim \mathcal{C}_{A_2}(v) & = 1 \cdot 5 + 2 \cdot 3 + 1 \cdot 6\\
    & = 20\\
    \dim \mathcal{C}_{A_3}(v) & = 1 \cdot 23 + 4 \cdot 20 + 3 \cdot 6 + 6 \cdot 12 + 1 \cdot 24\\
    & = 217\\
    \dim \mathcal{C}_{A_4}(v) & = 1 \cdot 119 + 5 \cdot 115 + 10 \cdot 50 + 10 \cdot 100 + 15 \cdot 30 + 10 \cdot 60 + 1 \cdot 120\\
    & = 3364
\end{align*}
    \caption{\label{fig:smallrank}The cases $n = 2, 3, 4$.}
\end{figure}

Going beyond these small rank computations, using a computer we were able to compute the dimension of $\mathcal{C}_{A_n}$ for all for $n \leq 43$, and we list the first $12$ in Figure \ref{fig:dimtable}.
\begin{figure}
    \centering
    \begin{tabular}{|c|c|}
    \hline
    $n$ & $\dim \mathcal{C}_{A_n}(v)$\\
    \hline
     $1$ & $3$\\ $2$ & $20$\\ $3$ & $217$\\ $4$ & $3364$\\ $5$ & $71098$\\ $6$ & $1960867$\\ $7$ & $67886033$\\ $8$ & $2871659468$\\ $9$ & $145498348666$\\ $10$ & $8683447971439$\\
     $11$ & $601843453126056$\\ $12$ & $47875219836485209$\\
     \hline
\end{tabular}
\caption{\label{fig:dimtable}A table of dimensions up to $n = 12$.}
\end{figure}

\clearpage

\bibliographystyle{alpha}
\bibliography{bibl}

\begin{thebibliography}{ERH18}

\bibitem[Ach21]{Achar}
Pramod~N. Achar.
\newblock {\em Perverse sheaves and applications to representation theory}, volume 258 of {\em Math. Surveys Monogr.}
\newblock Amer. Math. Soc., Providence, RI, 2021.

\bibitem[AJ17]{AJ}
Francesca Aicardi and Jes\'{u}s Juyumaya.
\newblock An algebra involving braids and ties, September 2017.

\bibitem[Del77]{DCoh}
P.~Deligne.
\newblock {\em Cohomologie \'{e}tale}, volume 569 of {\em Lecture Notes in Math.}
\newblock Springer-Verlag, Berlin, 1977.
\newblock S\'{e}minaire de g\'{e}om\'{e}trie alg\'{e}brique du Bois-Marie SGA $4\frac{1}{2}$.

\bibitem[ERH18]{SRH2}
Jorge Espinoza and Steen Ryom-Hansen.
\newblock Cell structures for the {Y}okonuma-{H}ecke algebra and the algebra of braids and ties.
\newblock {\em J. Pure Appl. Algebra}, 222(11):3675--3720, 2018.

\bibitem[Ete22]{Eteve}
Arnaud Eteve.
\newblock {K}azhdan-{L}aumon sheaves and {D}eligne-{L}usztig representations, November 2022.

\bibitem[How80]{How}
Robert~B. Howlett.
\newblock Normalizers of parabolic subgroups of reflection groups.
\newblock {\em J. Lond. Math. Soc. (2)}, 21(1):62--80, 1980.

\bibitem[JK01]{JuyKan}
Jes\'{u}s Juyumaya and S.~Senthamarai Kannan.
\newblock Braid relations in the {Y}okonuma-{H}ecke algebra.
\newblock {\em J. Algebra}, 239(1):272--297, 2001.

\bibitem[Juy98]{Juyu}
Jes\'{u}s Juyumaya.
\newblock Sur les nouveaux g\'{e}n\'{e}rateurs de l'alg\`{e}bre de {H}ecke {$\mathcal{H}(G,U,{\mathbf 1})$}.
\newblock {\em J. Algebra}, 204(1):49--68, 1998.

\bibitem[Kaz95]{KForms}
David Kazhdan.
\newblock ``{F}orms'' of the principal series for {$\underline{\rm GL}_n$}.
\newblock In {\em Functional analysis on the eve of the 21st century, {V}ol. 1 ({N}ew {B}runswick, {NJ}, 1993)}, volume 131 of {\em Progr. Math.}, pages 153--171. Birkh\"{a}user Boston, Boston, MA, 1995.

\bibitem[KL88]{KL}
David Kazhdan and G\'{e}rard Laumon.
\newblock Gluing of perverse sheaves and discrete series representation.
\newblock {\em J. Geom. Phys.}, 5(1):63--120, 1988.

\bibitem[Lau87]{Lau}
G\'{e}rard Laumon.
\newblock Transformation de {F}ourier, constantes d'\'{e}quations fonctionnelles et conjecture de {W}eil.
\newblock {\em Publ. Math. Inst. Hautes \'{E}tudes Sci.}, 65:131--210, 1987.

\bibitem[LY20]{LY}
George Lusztig and Zhiwei Yun.
\newblock Endoscopy for {H}ecke categories, character sheaves and representations.
\newblock {\em Forum Math. Pi}, 8:e12, 93, 2020.

\bibitem[Mar18]{MarYoko}
Ivan Marin.
\newblock Artin groups and {Y}okonuma-{H}ecke algebras.
\newblock {\em Int. Math. Res. Not. IMRN}, 2018(13):4022--4062, 2018.

\bibitem[MF22]{KLO}
Calder Morton-Ferguson.
\newblock {K}azhdan-{L}aumon {C}ategory {O}, {B}raverman-{K}azhdan {S}chwartz space, and the semiinfinite flag variety, October 2022.

\bibitem[MW13]{LinksGould}
Ivan Marin and Emmanuel Wagner.
\newblock A cubic defining algebra for the {L}inks-{G}ould polynomial.
\newblock {\em Adv. Math.}, 248:1332--1365, 2013.

\bibitem[Pol01]{P}
Alexander Polishchuk.
\newblock Gluing of perverse sheaves on the basic affine space.
\newblock {\em Selecta Math. (N.S.)}, 7(1):83--147, 2001.
\newblock With an appendix by R. Bezrukavnikov and the author.

\bibitem[RH11]{SRH}
Steen Ryom-Hansen.
\newblock On the representation theory of an algebra of braids and ties.
\newblock {\em J. Algebraic Combin.}, 33(1):57--79, 2011.

\bibitem[Wil03]{W}
Geordie Williamson.
\newblock Mind your {P} and {Q}-symbols: Why the {K}azhdan-{L}usztig basis of the {H}ecke algebra of {T}ype {A} is cellular, 2003.

\bibitem[Yok67]{Yoko}
Takeo Yokonuma.
\newblock Sur la structure des anneaux de {H}ecke d'un groupe de {C}hevalley fini.
\newblock {\em C. R. Math. Acad. Sci. Paris S\'{e}r. A-B}, 264:A344--A347, 1967.

\end{thebibliography}

\end{document}